\documentclass[11pt]{amsart}

\usepackage{amsmath}
\usepackage{amsfonts}
\usepackage{amssymb}
\usepackage{graphicx}
\usepackage{hyperref}
\usepackage{mathrsfs}
\usepackage{pb-diagram}
\usepackage{epstopdf}
\usepackage{amscd}
\usepackage{graphicx}
\usepackage{pb-diagram}
\usepackage{pstricks}
\usepackage[all]{xy}

\usepackage{amssymb,amsmath}
\usepackage[all]{xy}
\usepackage{graphicx}
\usepackage{amscd}
\usepackage{graphicx}
\usepackage{pb-diagram}
\usepackage{pstricks}
\usepackage[all]{xy}
\numberwithin{equation}{section}

\newtheorem{theorem}{Theorem}[section]
\newtheorem{definition}[theorem]{Definition}
\newtheorem{corollary}[theorem]{Corollary}
\newtheorem{example}[theorem]{Example}
\newtheorem{lemma}[theorem]{Lemma}
\newtheorem{assumption}[theorem]{Assumption}
\newtheorem{remark}[theorem]{Remark}
\newtheorem{prop}[theorem]{Proposition}

\newcommand{\RR}{\mathbb{R}}
\newcommand{\QQ}{\mathbb{Q}}

\newcommand{\ZZ}{\mathbb{Z}}

\newcommand{\CP}{\mathcal{P}}

\newcommand{\CM}{\mathcal{M}}

\newcommand{\bX}{\boldsymbol{X}}

\newcommand{\GG}{\Gamma}

\newcommand{\WT}[1]{\widetilde{#1}}
\newcommand{\OL}[1]{\overline{#1}}
\newcommand{\CR}{\textit{crit}}
\begin{document}
\author[Cho]{Cheol-Hyun Cho}
\address{Department of Mathematical Sciences, Research institute of Mathematics\\ Seoul National University\\ San 56-1, 
Shinrimdong\\ Gwanakgu \\Seoul 47907\\ Korea}
\email{chocheol@snu.ac.kr}
\author[Hong]{Hansol Hong}
\address{Department of Mathematical Sciences \\ Seoul National University\\ San 56-1, 
Shinrimdong\\ Gwanakgu \\Seoul 47907\\ Korea}
\email{hansol84@snu.ac.kr}

\title{Orbifold Morse-Smale-Witten complexes}
\thanks{This work was supported by the National Research Foundation of Korea(NRF) grant funded by the Korea government(MEST)(No. 2012-0000795)}
\begin{abstract}
Given a Morse-Smale function on  an effective orientable orbifold, we construct its Morse-Smale-Witten complex. 
We show that critical points of a certain type have to be discarded to build a complex properly, and that gradient flows should be counted with suitable weights. Its homology is proven to be isomorphic to the singular homology of the quotient space under the self-indexing assumption. For a global quotient orbifold $[M/G]$, such a complex can be understood as the $G$-invariant part of the Morse complex of $M$, where the $G$-action on generators of the Morse complex has to be defined including orientation spaces of unstable manifolds at the critical points.

Alternatively in the case of global quotients, we introduce the notion of weak group actions on Morse-Smale-Witten complexes for non-invariant Morse-Smale functions on $M$, which give rise to genuine group actions on the level of homology.
\end{abstract}
\maketitle

\section{Introduction}
Morse theory is one of the most important tools to understand the topology of manifolds. A modern approach to Morse theory, the Morse-Smale-Witten complex, has been exceedingly popular as its infinite dimensional analogue, Floer homology theory, has proven to be a very powerful tool in the area of symplectic and differential geometry. The Morse-Smale-Witten complex (Morse complex for short from now on) is generated by critical points of a Morse function, and the differential on this complex is given by counting the (signed) number of gradient flow lines between critical points of index difference one.

The notion of effective differentiable orbifolds was introduced by Satake \cite{Sa}
under the name ``$V$-manifolds", which is a natural generalization of the notion of differentiable manifolds. Locally it is a quotient space of an Euclidean space by the
effective action of a finite group. 

A Morse function on orbifolds is defined by an invariant function whose local lifts are Morse. In this paper, we define the Morse complex for an effective orbifold with a Morse-Smale function on it. Though Morse inequalities or informations about local Morse data for orbifolds have been known for a while since the work of Lerman-Tolman \cite{LT}, the construction of Morse complex for orbifolds has not been available.

We give a construction of such a complex, and show that its homology is isomorphic to the singular homology of the quotient space (under certain assumptions). There are a few interesting features for orbifold Morse theory.
The first one is that a broken trajectory (with only one breaking) can be a limit of several distinct families of smooth trajectories, whereas it is the limit of a unique family in the case of manifolds. This is due to the fact that there can be several different lifts of broken trajectories, which are not equivalent via local group actions (see Section \ref{analpropMf}).

Secondly, one should count the number of flow lines with suitable weights so that the differential of the chain complex involves data coming from the order of related isotropy groups. Namely, a gradient trajectory should be counted with weights which is determined by the stabilizer of its end point and the one of the trajectory itself.

The most interesting property is doubtlessly that one has to discard critical points of a certain type (called {\em non-orientable critical points}) and consider the subcomplex generated by only orientable critical points to define Morse complex correctly. This is related to the observation already made in \cite{LT} which says that if the local group action does not preserve the orientation of the unstable directions, the topology of the sublevel set of the quotient space does not change when passing through such a critical point. 

This phenomenon has a nice interpretation in the case of global quotient orbifolds.
Recall that the de Rham complex of the global quotient orbifold $[M/G]$ is simply given by the $G$-invariant part of
the de Rham complex of $M$. If we define $G$-action on the Morse complex of $M$ naively by sending
critical point (generator) $x$ to another critical point $gx$, then it turns out that the corresponding $G$-invariant chain subcomplex
does not define the homology of $H^*(M/G)$ (see Example \ref{broken heart}).
Instead, we should really regard each generator $x$ of the Morse complex as an element of the orientation spaces associate to the critical point $x$, which is $\wedge^{top}T_xW^u(x)$. Accordingly, there is an additional
contribution of the $G$-action on orientation spaces (not just points themselves), and the resulting $G$-action
on the new chain complex computes the
singular homology of the quotient space. A critical point $x$ is called {\em orientable} if the $G_x$-action preserves the sign
of $\wedge^{top}T_xW^u(x)$, and {\em non-orientable} otherwise. Non-orientable ones disappear when we take the
$G$-invariant subcomplex automatically.

One drawback is that it is difficult in general to find a function which is both G-invariant and 
Morse-Smale (which is a transversality condition between unstable and stable manifolds of critical points).
For example, Morse functions on orbifolds are dense among smooth functions (by \cite{Wa}, \cite{H}) but they may not be Morse-Smale. This issue is called the equivariant transversality problem. To avoid the usage of abstract perturbation theory,
we provide an alternative approach of weak group actions for global quotients, but we need to assume that a Morse-Smale function exists for general effective orbifolds. That is, we choose any Morse-Smale function on $M$ which is {\em not} necessarily $G$-invariant. And we show that the Morse complex (generated by orientation spaces) of $M$ admits a weak group action of $G$. This action induces a genuine $G$-action on the homology of the Morse complex.

The idea developed in this paper is expected to play a fundamental role to build up orbifold  Lagrangian Floer theory and orbifold Fukaya category associated to it in the sense that the Lagrangian Floer complex for a Lagrangian submanifold in a symplectic manifold generalizes the Morse complex by adding quantum corrections via counting of $J$-holomorphic curves. 
Recall that the work of Dixon, Harvey, Vafa and Witten on the string theory of orbifolds \cite{DHVW}, the discovery of a new ring structure on the cohomology of inertia orbifolds by Chen and Ruan\cite{CR} and orbifold Gromov-Witten invariants \cite{CR2}, have prompted many exciting  developments on the study of orbifolds in the last decades. But Lagrangian Floer theory for orbifolds has not been studied rigorously yet. This paper lays a foundation to define orbifold Fukaya category theory. We expect that new phenomena of orbifold Morse theory which are revealed in this paper should also be presented in orbifold Fukaya category theory, and we will give such a generalization to Novikov or Novikov-Floer theory for
global quotients in a near future.  We remark that for toric orbifolds, Lagrangian Floer theory for  smooth Lagrangian torus fibers has been developed in \cite{CP}.

We also remark that the equivariant cohomology version of Morse-Bott theory has appeared in the work of Austin and Braam \cite{AB} where they dealt with compact connected Lie group $G$. However, their construction does not immediately generalize for a finite group $G$
(which is not connected) due to the orientation space issues. Note that the action of a connected Lie group should be always orientation preserving on orientation spaces. Although it is possible to  present our orbifold as a quotient of a smooth manifold (the frame bundle)
by $SO(n)$, it is difficult to find a connection between their construction and ours as they use the  Cartan model.

Here is the outline of the paper. In Section \ref{MSW_GQ}, Morse theory of global quotient orbifolds is explained with a careful examination of orientation issues. In Section \ref{sec:intrin}, we reformulate the construction made in Section \ref{MSW_GQ} in a more intrinsic form. Analytic properties of obifold Morse-Smale functions are studied in Section \ref{analpropMf}.
In Section \ref{sec:MSWcpxgen}, we define Morse complexes for effective orbifolds and show that $\partial ^2 =0$. In Section \ref{sec:comparison}, we compare the homology of the Morse complex with the singular homology of the quotient space.  In Section \ref{equiv_trans}, we introduce a notion of weak group actions on the Morse complex of a Morse-Smale function $f$ which is not $G$-invariant.
\section*{Acknowledgment}
We thank an anonymous referee for suggestions to improve the exposition of the paper.

\section{Morse-Smale-Witten complexes for global quotients}\label{MSW_GQ}
Let $M$ be a closed oriented connected manifold and suppose that a finite group $G$ acts on $M$ effectively in an orientation preserving way. We denote by $\bX$ the global quotient orbifold $[M/G]$ and denote by $X$ its quotient space and by $\pi : M \to X$ the natural projection map. We fix a $G$-invariant metric on $M$. 
Recall that a smooth function on an orbifold by definition has smooth invariant lift
 on each uniformizing chart. Analogously, a Morse function on an orbifold is defined as follows.
\begin{definition}\label{orbM}
A smooth function $\bar{f}:\bX \to \RR$ is called Morse if every point $\bar{x}$ in the orbifold has a uniformizing chart $(\WT{U}_{ \bar{x} } , G_{\bar{x}}, \pi_{\bar{x}})$ such that $\bar{f} \circ \pi_{\bar{x}}$ is Morse on $\WT{U}_{\bar{x}}$.
\end{definition}
In particular, the $G$-invariant Morse function $f:M \to \RR$ taken above induces a Morse function on the global quotient orbifold $\bX = [M/G]$.

Consider a $G$-invariant Morse function $f : M \to \RR$ which
always exist by \cite{Wa} (see Section \ref{equiv_trans} also).
We write $\bar{f}:\bX \to \RR$ for a function on orbifold $\bX$ or the quotient space $X$.
 \begin{assumption}\label{AssMorseSmale_GQ}
We assume that $f : M \to \RR$ is a $G$-invariant Morse-Smale function.
\end{assumption}
The condition for a gradient vector field $\nabla f$  to satisfy the Morse-Smale transversality condition together keeping
$G$-invariance of $f$ is
not generic, and hence the above is rather a restrictive assumption. For example, we can perturb $\bar{f}:M \to \RR$ to make it Morse-Smale, but lose the $G$-invariance. We postpone the discussion on this issue to the
last section, where we alternatively define weak group actions.

In this section, we construct a Morse complex for a $G$-invariant Morse-Smale function $f$.
We will provide two approaches, the first one is by defining the type (orientable or non-orientable) of a critical point, and considering a $G$-action on the subcomplex generated by orientable critical points (see Subsection \ref{OriCrit}). The second one is
to consider a slightly different chain complex, which is generated by orientation spaces of critical points
whose $G$-invariant part becomes the chain complex constructed in the first approach (Subsection \ref{OriSp}). The second
approach is more natural, but does not generalize to the case of general orbifolds.

\subsection{Orientabilities of critical points }\label{OriCrit}
Lerman and Tolman studied Morse homology of orbifolds in \cite{LT}, where they analyzed the local Morse data near a critical point in orbifolds and proved Morse inequalities for orbifolds. It is already observed in their work that there
are two types of critical points, and intuitively this is mainly because of the Corollary \ref{disk}.
We will see that  the orientation issues are very important even to set up the Morse complex for orbifolds. Even though the group action is assumed to preserve the given orientation of a manifold, it may not preserve the orientations of unstable directions at critical points.

% The critical points of $\bar{f}$ will be divided into two types, orientable and non-orientable critical points (see Definition \ref{def:type} below). We will discard the non-orientable critical points and show that the chain complex defined from orientable critical points gives Morse homology of the orbifold $\bX$ (or, $X$).

Denote sets of critical points of $f$ and $\bar{f}$ by $\CR(f)$ and $\CR(\bar{f})$, respectively. i.e. $\bar{p} \in \CR(\bar{f})$ if there exists $p \in \CR(f)$ such that $\pi (p) = \bar{p}$. As usual, we define $CM_\ast (M,f)$ as a complex of $\RR$-vector spaces freely generated by $p \in \CR(f)$. Denote by $W^+ (p)$ and $W^- (p)$ the stable and the unstable manifold at $p$ respectively (see for example \cite{Ni}). Also, denote the set of all critical points of $f$ of index $i$ by $\CR_i (f)$. We will write $\mu (\bar{p})$ and $\mu (p)$ for their Morse indices.

A Morse complex of $\bar{f}$ is a certain subcomplex of 
 $C M_\ast (M,f)$ defined in the following way. We first orient $W^{-}(p)$ for each $p \in Crit(f)$ and define the type of a critical point $\bar{p} \in \CR(\bar{f})$.
  
 \begin{definition}\label{def:type}
 If $G_{p}$-action on the unstable manifold $W^-(p)$ at $p \in \pi^{-1} (\bar{p})$ is orientation preserving, then $p$ is called {\em orientable} critical point, and non-orientable otherwise.
Denote by   $\CR^+ (\bar{f})$ (resp. $\CR^- (\bar{f})$ ) the set of all orientable (resp. non-orientable) critical points of $\bar{f}$.

We use the similar notation for critical points of $f$ .
\end{definition}
\begin{remark}
Such a dichotomy was considered already in \cite{LT} and in several subsequent works such as \cite{H}. As observed in \cite{LT}, this is  very natural in terms of local Morse data.  Indeed, we will see later that attaching cells which arise at non-orientable critical points do not contain any topological information for the  quotient space (see Corollary \ref{disk}).
\end{remark}
\begin{remark}
If $G_{p}$ is orientation preserving for one of $p \in \pi^{-1} (\bar{p})$, then
 so is $G_{p'}$ for other $p' \in  \pi^{-1} (\bar{p})$.
\end{remark}

 Let $\CR_i^\pm ({f}) := \CR_i ({f}) \cap \CR^\pm ({f})$. This induces the decomposition 
 $$CM_i (M,f) = C M_i^+ (M,f) \oplus C M_i^- (M,f).$$ 
It is easy to see that $G$-action preserves $CM_i^{+} (M,f)$ and we define 
$$ C M^+_i (X,\bar{f}) :=  CM^+_i (M,f)^G.$$
 %$G$-action not only preserves $\CR(f)$, but also preserves a (Morse) index and the type of a critical point and hence $G$ naturally acts on $Cf_i^+(M)$. Define
%$$ C \bar{f}^+_i (X) :=  Cf^+_i (M)^G,$$
%$G$-invariant chains in $Cf_\ast (X)$ consisting of orientable critical points of degree $i$.
For $\bar{p} \in \CR(\bar{f})$, we formally write 
 \begin{equation}\label{barp}
[\bar{p}] := \displaystyle\sum_{ p \in \pi^{-1} (\bar{p}) } p,
\end{equation}
and then, $CM^+ (X, \bar{f})(= \displaystyle\oplus_i C M _i^+ (X, \bar{f}) )$ is freely generated by $[\bar{p}]$'s for $\bar{p} \in \CR^+(\bar{f})$. 
 
Next, we define a boundary map $\partial_i : C M_i^+ (X,\bar{f}) \to C M_{i-1}^+ (X,\bar{f}) $. For each orientable critical point $\bar{p}$ of $\bar{f}$, take $G$-invariant orientations on $\{ W^- (p) \, | \, p \in \pi^{-1} (\bar{p}) \}$. For a non-orientable $\bar{p}$, we take an arbitrary orientation on $W^- (p)$. Since $f$ is a Morse function, we have the Morse differential $\partial : CM_i (M,f) \to CM_{i-1} (M,f)$ which is defined as follows:
 
\begin{definition}
For $p,q \in \CR(f)$, define $\WT{\mathcal{M}} (p,q)$ to be the set of all negative gradient flow lines from $p$ to $q$, and by taking quotient under time translation, 
$$\mathcal{M} (p,q) := \WT{\mathcal{M}} (p,q)/\RR.$$  
Then, we define
\begin{equation}\label{eq:bdy}
 \partial p := \sum_{q, \mu(q) = \mu(p)-1} \# \mathcal{M} (p,q) \,\, q.
 \end{equation}
Here, $\# \mathcal{M} (p,q)$ is the number of gradient flow lines in $\mathcal{M} (p,q)$ counted with signs. (See below for the precise sign rule.)
\end{definition}

Now,  we  define a differential $\partial$ for $\bar{f}:\bX \to \RR$ on $[\bar{p}]$ from the formula \ref{barp} and the differential for  $f: M \to \RR$. Namely, we set $\partial [\bar{p}] = \sum_{p \in \pi^{-1} (\bar{p})} \partial p$. We claim that this defines a differential for $C M^+ (X,\bar{f})$. To show this, we need the following two crucial lemmata:

\begin{lemma}\label{cancel}
If $\bar{p} \in \CR^+ (\bar{f})$, then
\begin{equation}\label{bdy1} 
\partial [\bar{p}] \in C M^+(X,\bar{f}).
\end{equation}
i.e. $\partial [\bar{p}]$ has nonzero coefficients only at orientable critical points of $f$.
\end{lemma}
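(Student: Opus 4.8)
The plan is to reduce the statement to a single non-orientable critical point of $f$ and then exhibit a sign-reversing symmetry of the relevant zero-dimensional moduli space. Fix a non-orientable critical point $q$ of $f$ with $\mu(q)$ one less than the common index of the points of $\pi^{-1}(\bar p)$; since $\partial[\bar p]=\sum_{p'\in\pi^{-1}(\bar p)}\partial p'$, the coefficient of $q$ in $\partial[\bar p]$ is $N:=\sum_{p'\in\pi^{-1}(\bar p)}\#\mathcal{M}(p',q)$, and it suffices to show $N=0$. Here $N$ is read off from the chosen orientations of $W^-(p')$, $p'\in\pi^{-1}(\bar p)$, and of $W^-(q)$. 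The first thing I would record is that, because $\bar p$ is orientable, each $G_{p'}$ acts orientation-preservingly on $W^-(p')$, so the orientations of the $W^-(p')$ over the single $G$-orbit $\pi^{-1}(\bar p)$ can be chosen $G$-equivariantly — transporting a fixed orientation of $W^-(p)$ by group elements is well defined, and then $g_\ast$ sends the chosen orientation of $W^-(p')$ to that of $W^-(gp')$ for every $g\in G$. By contrast, the orientation of $W^-(q)$ is arbitrary.

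Next I would construct the symmetry. Since $q$ is non-orientable, choose $g\in G_q$ acting orientation-reversingly on $W^-(q)$. As $g$ preserves $f$ and the $G$-invariant metric, it preserves $\nabla f$ and commutes with time translation, hence carries negative gradient trajectories to negative gradient trajectories and induces bijections $\mathcal{M}(p',q)\xrightarrow{\ \sim\ }\mathcal{M}(gp',q)$; since $g$ permutes $\pi^{-1}(\bar p)$, these assemble into a self-bijection $\Phi$ of the finite set $\bigsqcup_{p'\in\pi^{-1}(\bar p)}\mathcal{M}(p',q)$. The heart of the argument is that $\Phi$ reverses the sign of every trajectory: by naturality of the Morse--Smale--Witten sign convention under a diffeomorphism intertwining the gradient flows, the sign of $\Phi(\gamma)$ computed with the chosen orientations of $W^-(gp')$ and $W^-(q)$ equals the sign of $\gamma$ computed with $g_\ast$ of the chosen orientations of $W^-(p')$ and $W^-(q)$; the former agrees with the chosen orientation of $W^-(gp')$ by $G$-equivariance, while the latter is the opposite of the chosen orientation of $W^-(q)$, so $\varepsilon(\Phi(\gamma))=-\varepsilon(\gamma)$ for every $\gamma$. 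Summing over $\gamma$ and using that $\Phi$ is a bijection, $N=\sum_\gamma\varepsilon(\gamma)=\sum_\gamma\varepsilon(\Phi(\gamma))=-N$, whence $N=0$ (we work over $\RR$). Since $q$ was an arbitrary non-orientable critical point of $f$, $\partial[\bar p]$ has vanishing coefficient at every non-orientable critical point.

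To conclude I would also choose the orientations over the remaining, orientable, critical orbits $G$-equivariantly, so that the Morse--Smale--Witten differential $\partial$ of $f$ becomes genuinely $G$-equivariant; then $g_\ast\partial[\bar p]=\partial g_\ast[\bar p]=\partial[\bar p]$ for all $g\in G$ since $[\bar p]$ is $G$-invariant, so $\partial[\bar p]\in Cf(M)^G$, and together with the vanishing proved above this gives $\partial[\bar p]\in Cf^+(M)^G=C\bar f^+(X)$.

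I expect the only genuine obstacle to be the sign bookkeeping in the middle paragraph: one must check carefully that a diffeomorphism commuting with the two gradient flows acts on the oriented moduli spaces compatibly with the orientation data, and that negating the orientation of the target's unstable manifold while fixing that of the source reverses all trajectory signs. This is a statement about the chosen sign convention rather than anything geometric; the remaining ingredients — finiteness of $\mathcal{M}(p',q)$ from the Morse--Smale condition, $G$-invariance of $\nabla f$, and the equivariant choice of orientations along orbits — are routine.
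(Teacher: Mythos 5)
Your argument is correct and is essentially the paper's own proof: both single out a $g\in G_q$ reversing the orientation of $W^-(q)$ and show it gives a sign-reversing bijection of the relevant one-dimensional moduli spaces, forcing the signed count at $q$ to vanish; the only cosmetic difference is that you invoke naturality of the sign convention under a flow-intertwining diffeomorphism, where the paper verifies the sign flip concretely by comparing oriented frames of $S^-(p)$, $S^+(q)$ inside the level set $f^{-1}(s)$ (and explicitly records that $g$ preserves $[M]$, hence $[f^{-1}(s)]$ — a point you should also make explicit since it enters the sign rule). Your closing paragraph on $G$-equivariance of $\partial$ is also correct, but is not part of the paper's proof of this lemma; the paper defers that to the immediately following lemma.
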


\begin{proof}
Suppose $\bar{p}$ is of index $i$. Then, each $p$ with $p \in \pi^{-1} (\bar{p})$ has index $i$. We will show that the coefficient in $\partial [\bar{p}]$ at an arbitrary non-orientable critical point $q \in \CR_{i-1}^- (f)$ is zero. We set $\mathcal{M} (\bar{p}, q) = \cup_{p \in \pi^{-1} (\bar{p})} \mathcal{M} (p,q)$.

%Set $\mathcal{M} (\bar{p}, q)$ be the set of negative gradient flow lines from any $p \in \pi^{-1} (\bar{p})$ to $q$. Then, $\mathcal{M} (\bar{p}, q)$ is a signed set. i.e. there is a prescribed sign for each element $\gamma \in \mathcal{M} (\bar{p}, q)$ according to the standard sign rule in Morse homology theory. 

We briefly recall the sign rule for $\mathcal{M} (p,q)$ for reader's convenience.
We fix an orientation of $M$ and that of each unstable manifold. This will orient every stable manifolds so that for each critical point $p$ of $f$, we have
$[W^{+}(p)]_p \, [W^{-}(p)]_p = [M]_p$, where $[\,\,]_p$ means the oriented frames of the
tangent spaces at $p$. Thus, the intersection $W^- (p) \cap W^+(q)$ admits an induced orientation (we follow the orientation conventions in \cite{GP}).

Let $\gamma$ be a negative gradient flow line connecting $p$ and $q$ of relative index $1$. i.e. $\mu (q) =\mu (p) - 1$. Then, ${\rm{Im}} \, \gamma \subset W^- (p) \cap W^+(q)$. If the negative gradient flow orientation of $\gamma$
matches the induced orientation, then it is counted as $+1$, and otherwise as $-1$.

The following convention also produces the same sign rule. Fix a regular value $s \in \RR$ with $f(q) < s < f(p)$. We orient the set $f^{-1}(s)$ so that $[\nabla f][f^{-1}(s)]=[M]$. 
Consider  $S^-(p) := W^- (p) \cap f^{-1}(s) $ which is oriented as a boundary of 
$D^- (p) := W^-(p) \cap f^{-1}([s,\infty))$. Similarly, consider $S^+(q)$
which is oriented as a boundary of $D^+ (q)$. In fact, they are diffeomorphic to $S^{i-1}$ and $S^{n-i}$, respectively. Because $ S^- (p)$ and $ S^+(q) $ are of complementary dimensions in $f^{-1} (s)$, we can count their signed intersection number. One can check that this sign agrees with the former one (following sign rules of \cite{GP}).

Now we prove the lemma.
Suppose that $q$ is a non-orientable critical point with $\mu(q) = \mu(p) -1$.
We split $\mathcal{M} (\bar{p}, q)$ into a disjoint union $\mathcal{M} (\bar{p},q)= \mathcal{M} (\bar{p},q)^+ \sqcup \mathcal{M} (\bar{p},q)^-$ with respect to their signs. Clearly, the sum of these signs will be the coefficient of $q$ in \eqref{eq:bdy}, and hence we have to show that $|\mathcal{M} (\bar{p}, q)^+| = |\mathcal{M} (\bar{p}, q)^-|$. 

Pick any $g \in G_{q}$ which reverses the orientation of $W^- (q)$. Then, $g$ will give a permutation of $\mathcal{M} (\bar{p}, q)$ since $g$ preserves $\pi^{-1} (\bar{p})$. We claim that $g$ sends $\mathcal{M} (\bar{p}, q)^+$ to $\mathcal{M} (\bar{p}, q)^-$. To see this, we consider the action of $g$ on $f^{-1} (s)$. If $g  \cdot p = p'$, then $g$ sends $S^- (p)$ to $S^- (p')$ and preserves $S^+ (q)$. Consider $x \in S^- (p) \cap S^+(q) $ which corresponds to $\gamma \in \mathcal{M} (\bar{p}, q)^+$. By the sign rule given above, $S^- (p)$ and $S^+ (q)$ intersect positively at $x$ in $f^{-1}(s)$, meaning that
$$[ S^- (p)]_x[ S^+ (q) ]_x=[f^{-1}(s)]_x.$$
%Now, let's pass through the automorphism of $f^{-1} (s)$ induced by $g$. $G$-invariant orientations on unstable manifolds at $p$'s implies that:
Since $\bar{p}$ is orientable, our choice of the $G$-invariant orientations on $W^- (p)$'s implies that
$$g \cdot [S^- (p)]_x =[S^- (p') ]_{gx},$$
and since $g$ reverses the orientation of the unstable manifold at $q$,
$$g \cdot [S^+ (q)]_x = - [S^+ (q)]_{gx}.$$
As $g$ preserves the orientation of $M$ and $f$ is $g$-invariant, $g$ preserves the orientation of $f^{-1} (s)$, or $g \cdot [f^{-1}(s)]_x = [f^{-1}(s)]_{gx}$.
Consequently, by considering the oriented frames at $g\cdot x$, we have
$$[ S^- (p')]_{gx}[S^+ (q)]_{gx}
= (g \cdot [S^- (p)]_x)(-g \cdot [S^+ (q)]_x)
= - g \cdot [ f^{-1} (s) ]_x = - [ f^{-1} (s) ]_{gx}.$$
This means $ S^- (p')$ and $ S^+(q)$ intersect negatively at $g \cdot x$. Therefore, the sign of the trajectory $g \cdot \gamma$ is negative. This proves the claim. By the same argument $g^{-1}$ sends $\mathcal{M} (\bar{p},q)^-$ to $\mathcal{M}  (\bar{p}, q)^+$. But since $g$ and $g^{-1}$ are inverses to each other, we get a bijection $g$ from $\mathcal{M} (\bar{p}, q )^+$ to $\mathcal{M} (\bar{p},q)^-$. In particular $|\mathcal{M} (\bar{p}, q)^+| = |\mathcal{M} (\bar{p}, q)^-|$.
\end{proof}

\begin{lemma} The expression $\partial \left( \sum_{p \in \pi^{-1} (\bar{p})} p \right)$ in \eqref{bdy1} is $G$-invariant if $\bar{p}$ is orientable.
\end{lemma}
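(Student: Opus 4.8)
The plan is to exploit the fact that, after passing to the orbit-sum $[\bar p]$ and invoking Lemma \ref{cancel}, every orientation entering the sign computation has already been chosen $G$-invariantly, so the $G$-action on $M$ induces a \emph{sign-preserving} symmetry of the relevant moduli spaces of gradient lines.

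First I would record the trivial observation that $[\bar p] = \sum_{p \in \pi^{-1}(\bar p)} p$ is itself $G$-invariant, since each $g \in G$ merely permutes the fiber $\pi^{-1}(\bar p)$. Thus showing that $\partial[\bar p]$ is $G$-invariant is the same as showing $g \cdot \partial[\bar p] = \partial(g\cdot[\bar p])$ for every $g \in G$, i.e. that $g$ commutes with $\partial$ on this particular chain. By Lemma \ref{cancel} I may write $\partial[\bar p] = \sum_q c_q\, q$, where $q$ ranges over the \emph{orientable} critical points of index $\mu(\bar p)-1$ and $c_q = \#\mathcal{M}(\bar p, q)$ is the signed count of negative gradient lines from $\pi^{-1}(\bar p)$ to $q$. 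Since the $G$-action preserves both the index and the type of a critical point, $g\cdot\partial[\bar p] = \sum_q c_q\,(g\cdot q)$ is again supported on orientable critical points of the same index; comparing coefficients, the claim reduces to proving $\#\mathcal{M}(\bar p, q) = \#\mathcal{M}(\bar p, gq)$ for every orientable $q$ and every $g \in G$.

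Next I would set up the evident bijection $\gamma \mapsto g\cdot\gamma$ between negative gradient trajectories from $\pi^{-1}(\bar p)$ to $q$ and those from $\pi^{-1}(\bar p)$ to $gq$, which is immediate because $g$ is an $f$-preserving isometry carrying $\pi^{-1}(\bar p)$ onto itself. The real work is to check this bijection preserves signs, and for that I would reuse the slice description of signs from the proof of Lemma \ref{cancel}: fix $s$ with $f(q) < s < f(p)$, represent $\gamma$ by a point $x \in S^-(p)\cap S^+(q)$ inside $f^{-1}(s)$, and push everything forward by $g$. Because $\bar p$ is orientable the chosen orientations of $W^-(p)$ for $p \in \pi^{-1}(\bar p)$ are $G$-invariant, and because $q$ is orientable (Lemma \ref{cancel}) the same holds at $q$; combined with $g\cdot[M] = [M]$ and $f\circ g = f$, this forces $g$ to preserve the induced orientations on the stable manifolds $W^+(p), W^+(q)$, hence on $S^-(p)$, $S^+(q)$ and on $f^{-1}(s)$, so the intersection sign at $x$ coincides with the intersection sign at $g\cdot x$. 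Summing over $\gamma$ yields $\#\mathcal{M}(\bar p, q) = \#\mathcal{M}(\bar p, gq)$, whence $g\cdot\partial[\bar p] = \partial[\bar p]$.

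The main obstacle is exactly this sign bookkeeping in the third step, but it is essentially a rerun of the argument already carried out in the proof of Lemma \ref{cancel}, with the decisive simplification that there is now no orientation-reversing element to contend with: at every critical point that actually contributes to $\partial[\bar p]$, the orientation is $G$-invariant by construction, so each sign is simply transported along by $g$. I therefore expect no genuine difficulty here, only care in matching orientation conventions.
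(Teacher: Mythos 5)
Your proof is correct and takes essentially the same approach as the paper: reduce via Lemma~\ref{cancel} to orientable critical points, then observe that $g$ gives a sign-preserving bijection $\mathcal{M}(\bar p, q) \to \mathcal{M}(\bar p, gq)$ because the orientations on unstable manifolds at orientable critical points were chosen $G$-invariantly. The only difference is that you spell out the slice-level sign bookkeeping explicitly, whereas the paper simply asserts it is "obvious" from the $G$-invariance of the orientations; the content is the same.
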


\begin{proof}
By the previous lemma, \eqref{bdy1} only consists of orientable critical points. Consider two orientable points $q$ and $q' := g \cdot q$ appearing non-trivially in \eqref{bdy1}. We need to show that coefficients of $q$ and $q'$ are equal. However, this is obvious since $g$ and $g^{-1}$ give the sign preserving isomorphisms between $\mathcal{M} (\bar{p}, q) $ and $\mathcal{M} (\bar{p}, q')$. This is because we chose the orientation on the unstable manifold at each orientable critical point of $f$ in a $G$-invariant way.
\end{proof}

We have shown that the Morse boundary map $\partial$ preserves $CM^+_{\ast} (M,f)^G \subset CM_\ast (M,f)$. Thus, we conclude that $ C M^+_\ast (X,\bar{f}) = CM^+_{\ast} (M,\bar{f})^G$ is a subcomplex of $ CM_\ast (M,f)$. We write $CM_\ast (\bX, \bar{f})$ for $ C M^+_\ast (X,\bar{f})$, and use the same notation $\partial$ for the restriction of $\partial : CM_\ast (M,f) \to CM_\ast (M,f)$ to $CM_\ast (\bX,\bar{f})$. Note that $\partial^2 =0$ automatically follows from the property of the Morse boundary of $(M,f)$.

In fact, the resulting homology group $HM_\ast (\bX, \bar{f})$ is isomorphic to the singular homology of the quotient space $X$. We postpone its proof to proposition \ref{comp} in Section 4,   where we deal with it in more general cases.
\begin{theorem}
$HM_\ast (\bX, \bar{f} ) \cong H_\ast (M/G) = H_\ast (X)$.
\end{theorem}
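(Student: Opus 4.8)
The plan is to reduce the assertion to two classical facts — that the Morse complex of the closed manifold $M$ computes $H_\ast(M;\RR)$ in a $G$-equivariant way, and that over $\RR$ the singular homology of a finite quotient is the invariant part of the homology of the total space — the bridge between them being the observation that \emph{$CM_\ast(\bX,\bar f)$ is literally the $G$-invariant subcomplex of the ordinary Morse complex $Cf_\ast(M)$}. In other words, discarding the non-orientable critical points is not an ad hoc device: it is exactly what taking $G$-invariants does to $Cf_\ast(M)$.

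For this identification, note (by the Remark after Definition \ref{def:type}) that the type of a critical point is constant along $G$-orbits, so the $\RR[G]$-module $Cf_\ast(M)$ splits equivariantly as $\bigoplus_{\bar p\in\CR(\bar f)} V_{\bar p}$ with $V_{\bar p}=\bigoplus_{p\in\pi^{-1}(\bar p)}\RR\langle p\rangle$. Because a generator $p$ implicitly carries an orientation of $W^-(p)$, the action of $G$ on $V_{\bar p}$ is the induced representation $\mathrm{Ind}_{G_p}^{G}(\chi_{\bar p})$, where $\chi_{\bar p}\colon G_p\to\{\pm1\}$ records whether an element of $G_p$ preserves or reverses that orientation. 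By Frobenius reciprocity $(V_{\bar p})^{G}$ is one-dimensional, spanned by $[\bar p]$, when $\chi_{\bar p}$ is trivial (i.e. $\bar p$ orientable), and is $0$ otherwise; hence $(Cf_\ast(M))^{G}=\bigoplus_{\bar p\in\CR^{+}(\bar f)}\RR\langle[\bar p]\rangle=CM_\ast(\bX,\bar f)$, and Lemma \ref{cancel} together with the lemma following it confirms that the Morse differential of $M$ restricts to this invariant part (as it must, being $G$-equivariant).

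With this in hand the conclusion is short. Since $G$ is finite and the coefficients are $\RR$, the averaging idempotent $\tfrac1{|G|}\sum_{g}g$ makes $(-)^{G}$ exact on $\RR[G]$-modules, so $HM_\ast(\bX,\bar f)=H_\ast\big((Cf_\ast(M))^{G}\big)\cong \big(HM_\ast(M,f)\big)^{G}$. The canonical isomorphism $HM_\ast(M,f)\cong H_\ast(M;\RR)$ is natural for diffeomorphisms fixing $f$ and the metric, hence $G$-equivariant, so the right-hand side is $H_\ast(M;\RR)^{G}$; finally the transfer map, available over $\RR$ for actions of finite groups, identifies $H_\ast(M;\RR)^{G}$ with $H_\ast(M/G;\RR)=H_\ast(X)$. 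Chaining these gives $HM_\ast(\bX,\bar f)\cong H_\ast(X)$.

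I expect the real obstacle to be not this global-quotient argument but its extension to an arbitrary effective orbifold, where there is no total space $M$ whose invariants one can take — which is why the theorem is proved later, in the form of Proposition \ref{comp}, by a more geometric route: assuming $\bar f$ self-indexing, one filters $X$ by sublevel sets $X_k=\bar f^{-1}\big((-\infty,k+\tfrac12]\big)$ and uses the Lerman--Tolman local Morse data to get $H_\ast(X_k,X_{k-1})\cong\bigoplus_{\bar p\in\CR_k(\bar f)}\widetilde H_{\ast-1}(S^{k-1}/G_p;\RR)$; the transfer computation $\widetilde H_\ast(S^{k-1}/G_p;\RR)\cong\widetilde H_\ast(S^{k-1};\RR)^{G_p}$ then shows this is $\RR$ in degree $k$ for orientable $\bar p$ and vanishes for non-orientable $\bar p$ (Corollary \ref{disk}), so the filtration spectral sequence has its $E^1$-page on a single line, degenerates at $E^2$, and its $E^1$-complex is $CM_\ast(\bX,\bar f)$ — once one checks that the connecting map $d^1$ agrees with $\partial$. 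That last check is the genuinely hard step: downstairs a broken trajectory can be the limit of several inequivalent families of lifts, and the flow lines must be counted with the stabilizer-dependent weights introduced in Section 4. For the global quotient $\bX=M/G$ of this section, however, the transfer argument above already suffices, with no need for self-indexing.
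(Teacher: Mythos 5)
Your argument is correct, and it follows a genuinely different route from the paper's own proof. The paper deliberately defers the theorem to Proposition~\ref{comp}, where it is proved (under a self-indexing assumption on $\bar f$) for arbitrary effective orbifolds by filtering $X$ by sublevel sets, applying excision and Corollary~\ref{disk} to the attaching cells $D^k/G_{\bar p}$, and then identifying the $E^1$-differential with $\partial$ by integrating Thom forms of stable disks against a chain bounding $\partial\left|\bar p\right> - \sum a_{\bar q}\left|\bar q\right>$. Your argument instead exploits the special structure of a global quotient: working with the sign-corrected $G$-action on $Cf_\ast(M)$ (which is exactly the ``decorated by the orientation sheaf of unstable manifolds'' point of view that the paper mentions in passing in Example~\ref{broken heart} and develops in Section~6), you identify $CM_\ast(\bX,\bar f)$ with the honest invariant subcomplex $(Cf_\ast(M))^G$ via Frobenius reciprocity, then use exactness of $(-)^G$ over $\RR$ and the transfer isomorphism $H_\ast(M;\RR)^G\cong H_\ast(M/G;\RR)$. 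This buys you a considerably shorter proof that does not need the self-indexing hypothesis, at the price of being unavailable for orbifolds that are not global quotients -- exactly the trade-off you identify. The only place you gloss slightly is the assertion that the canonical isomorphism $HM_\ast(M,f)\cong H_\ast(M;\RR)$ is $G$-equivariant for the sign-corrected action; this is true but does require checking that the Morse-to-singular comparison map (say, the one sending a critical point to the closure of its unstable disk) intertwines the two $G$-actions including signs, which is essentially the content of the paper's Lemmas~\ref{cancel} and following. Since you cite those lemmas for the same purpose, the proposal stands.
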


\begin{example}\label{broken heart}
Consider the famous heart $S^2$ with the Morse function $h$ given by the height function $h$ with two maximum $p, q$ , one minimum $s$ and one saddle point $r$.
The heart $S^2$ admits the $\ZZ/2\ZZ$-action generated by the $180$-degree rotation which interchanges $p$ and $q$ and fixes $r$ and $s$. Note that this $\ZZ/2\ZZ$-action preserves $h$.
The quotient space $S^2/(\ZZ/2\ZZ)$ is again $S^2$ topologically. (See Figure \ref{hannah}.)
A chain complex obtained by the naive $G$-action on critical points (without considering orientability) is
$$0  \;\to \;<(p+q)>\; \to\; <r> \;\to\; <s>\; \to \;0.$$
where $<(p+q)>$ denotes the one dimensional vector space generated by $p+q$.
However, the differential here is not squared to be zero, and hence the homology is not well-defined.

\begin{figure}[h]
\begin{center}
\includegraphics[height=1.5in]{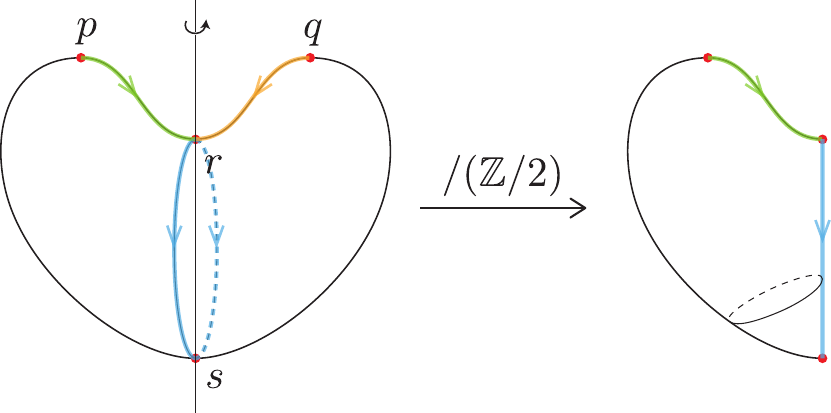}
\caption{The heart $S^2$ equipped with a $\ZZ / 2 \ZZ$-action}\label{hannah}
\end{center}
\end{figure}

The correct  Morse chain complex for the quotient orbifold is
$$0 \;\to\; <(p+q)> \;\to\; 0 \;\to <s>\; \to \;0.$$
Here, critical points $p, q, s$ are orientable whereas the critical point $r$ is non-orientable as the half-rotation reverses the orientation
of the unstable manifold at $r$. Hence, we discard $<r>$ and do not use it as a generator in the above complex.
In this way, we obtain a Morse-homology of $S^2/(\ZZ/2\ZZ)$ which is isomorphic to
the singular homology of $S^2$.

%In fact, this is  $G$-invariant part of the Morse chain complex of $M$ when the critical points of $M$ are decorated with orientation sheaf of unstable manifolds. Such a point of view will be explained in more detail in the last section in the Bott-Morse setting. 

Note that the method of taking invariant subcomplexes obviously does not work for general orbifolds which are not global quotients.
\end{example}

\subsection{Orientation spaces}\label{OriSp}
Now, we explain  an alternative, but more natural definition of the Morse complex for a global quotient orbifold.
Previously, critical points generated Morse complex, but we will consider what is called the orientation space for
each critical point, and consider a Morse complex generated by them. 

%On the technical level, we are only
%interested in the sign of the orientation space, not the magnitude, so we will use a setup called
%$\RR$-normalization by Seidel in  \cite{Se} (section (12e)).

We define the orientation space $\Theta_p^-$ at a critical point $p$ of $f$ by $$\Lambda^{\mu (p) }T_p W^- (p),$$
which is isomorphic to $\RR$. (We may define $\Theta_p^- = H_{\mu (p)-1} (W^-(p) \setminus p)$ equivalently.) 
The geometric reason to consider such an orientation space lies in the fact that critical point $p$ essentially represents the cell coming from the unstable 
manifold $W^-(p)$, and hence the group action $p \to g \cdot p$ should really be considered as a map from $W^-(p)$
to $W^-(g \cdot p)$. The last map may or may not be orientation preserving. Therefore, by introducing  $\Theta_p^-$,
we can define a $G$-action on the Morse complex including such orientation data.

We define $CM_\ast (M,f;\Theta)$ by
\begin{equation}\label{CMTheta}
CM_\ast (M,f;\Theta):=\bigoplus_{p \in \CR(f)} \Theta_p^-.
\end{equation}
The Morse boundary operator for \eqref{CMTheta} is defined as follows. We first fix an orientation on $\Theta_p^-$ for each $p \in \CR(f)$. Then, we get a preferred trivialization $\Theta_p^- \stackrel{\cong}{\longrightarrow} \RR \left<p\right>$ which sends the unit vector in the positive direction to $1(=1 \cdot p) \in \RR \left< p \right>$. Therefore, the choice of an orientation on each orientation space gives rise to an isomorphism 
\begin{equation}\label{isomTheta}
CM_\ast (M,f;\Theta) \to CM_\ast (M,f).
\end{equation}
We, then, pull back the usual boundary operator on $CM_\ast (M,f)$ to $CM_\ast (M,f;\Theta)$ by this isomorphism.

%Let $|\Theta_p^-|_\RR$ be the one dimensional $\RR$-vector space generated by the two orientations of $\Theta_p^-$, with the relation that the sum of two generators is zero. Now we denote $CM_\ast (M,f ;\Theta) :=\bigoplus_{p \in \CR(f)} |\Theta_p^-|_\RR$.

$CM_\ast (M,f;\Theta)$ admits a natural $G$-action since $g : W^- (p) \to W^- (g \cdot p)$ induces a $\RR$-linear isomorphism $g_\ast :\Theta_p^- \to \Theta_{g \cdot p}^-$. i.e. $g \in G$ acts on $(p, o)$ ($o \in \Theta_p^-$) by $g \cdot (p,o) = (g \cdot p, g_\ast o)$. By restricting the metric on the ambient space, we have a $G$-invariant metric on each $\Theta_p^-$ so that $||o|| = ||g_\ast o||$. In particular, $g \cdot (p,o) = (p, \pm o)$ for $g \in G_p$ and hence, $G_{(p,o)} =G_p$ if and only if $p$ is an orientable critical point.

By the isomorphism \eqref{isomTheta}, we get another $G$-action on $CM_\ast (M,f)$. Write this  action by $p \mapsto g(p)$. If the induced map $g_\ast : \Theta_p^- \to \Theta_{g \cdot p}^-$ is orientation preserving with respect to the above choice of orientations on $\Theta_{p}^-$ and $\Theta_{g \cdot p}^-$, then $g (p) = g \cdot p$  and, $g(p) = - g \cdot p$ otherwise. Note that the new action of $g$ coincides with the original one for orientable critical points.

\begin{lemma}
The chain complex $CM_\ast (\bX, \bar{f}) \left(=C M^+_\ast (M,f)^G \right)$ is the same as the $G$-invariant subcomplex $CM_\ast (M,f; \Theta)^G$.
\end{lemma}
\begin{proof}
We have seen in the above that two chain complexes agree on the components generated by orientable critical points. It is easy to see from Lemma \ref{cancel} that if $p \in \CR(f)$ is non-orientable, then the component 
$$\bigoplus_{g \cdot p \in \pi^{-1}(\bar{p})} \Theta_{g \cdot p}^-$$
in $CM_\ast (M,f; \Theta)$ is cancelled out after taking $G$-invariants.
\end{proof}

\section{Intrinsic formulae}\label{sec:intrin}
In this section, we will find more intrinsic form of the formula of $\partial$ for the global quotient orbifold $\bX$ in order to extend it to the case of general orbifolds. Consider two orientable critical  points $\bar{p}$ and $\bar{q}$ of $\bar{f}$ of indices $k$ and $k-1$, respectively and suppose that there exists a negative gradient flow line $\bar{\gamma}$ from $\bar{p}$ to $\bar{q}$ in $X$. We want to find the contribution of $\bar{\gamma}$ to the coefficient at $[\bar{q}]$ in $\partial [\bar{p}]$. Recall that $[\bar{p}] = \sum_{ p \in \pi^{-1} (\bar{p}) } p$. Let $\gamma$ be one of liftings of $\bar{\gamma}$.

\begin{lemma}\label{flow}
Given a negative gradient flow line $\gamma$ in $M$, the isotropy groups $G_{x}$ for points $x \in {\rm{Im}} \, \gamma$ are the same.
\end{lemma}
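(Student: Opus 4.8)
The plan is to combine equivariance of the gradient vector field with uniqueness of integral curves.

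First I would record the basic equivariance property. Since the metric is $G$-invariant and $f \circ g = f$ for every $g \in G$, the differential $dg$ carries the gradient vector field of $f$ to itself, i.e. $dg_x\bigl(\nabla f(x)\bigr) = \nabla f(g\cdot x)$ for all $x \in M$. Consequently, if $\gamma$ is a negative gradient flow line of $f$, then so is the translated curve $g\cdot\gamma : t \mapsto g\cdot\gamma(t)$.

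Next, fix a point $x = \gamma(t_0)$ in the (open) trajectory and take any $g \in G_{x}$. Then $t \mapsto g\cdot\gamma(t)$ is a negative gradient flow line which agrees with $\gamma$ at $t = t_0$, because $g\cdot\gamma(t_0) = g\cdot x = x = \gamma(t_0)$. By uniqueness of integral curves of the smooth vector field $-\nabla f$ through a given point, $g\cdot\gamma(t) = \gamma(t)$ for all $t \in \RR$. Hence every element of $G_{x}$ fixes the entire image of $\gamma$; in particular $G_{x} \subseteq G_{y}$ for every $y \in {\rm{im}}\,\gamma$. Interchanging the roles of $x$ and $y$ gives the reverse inclusion, so in fact $G_{x} = G_{y}$ for all $x, y \in {\rm{im}}\,\gamma$, which is even stronger than the asserted isomorphism. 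One may therefore speak of the stabilizer $G_{\gamma}$ of the flow line itself.

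I do not anticipate a genuine obstacle here: the argument is essentially ODE uniqueness together with the equivariance of $\nabla f$, the latter relying on the fact that \emph{both} $f$ and the metric are $G$-invariant. The only point that requires care is that ${\rm{im}}\,\gamma$ must be understood as the set $\{\gamma(t) : t \in \RR\}$, \emph{not} its closure: at the limiting critical points $p = \lim_{t\to -\infty}\gamma(t)$ and $q = \lim_{t\to +\infty}\gamma(t)$ the isotropy may jump up, and by continuity one only gets $G_{\gamma} \subseteq G_{p} \cap G_{q}$. This common group $G_{\gamma}$ is precisely what will enter the weights attached to $\bar\gamma$ in the definition of the differential.
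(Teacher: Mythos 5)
Your argument is correct and is essentially the paper's own proof: the paper invokes $G$-equivariance of the flow map $\Phi_t$ directly, while you derive the same fact from $G$-equivariance of $\nabla f$ plus ODE uniqueness, reaching the identical stronger conclusion that $G_x$ is literally the same subgroup for all $x$ on the open trajectory. Your closing remark about excluding the limit endpoints (where the isotropy can only jump up) mirrors the paper's implicit restriction that $x,y$ are non-critical and is the right caveat.
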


\begin{proof}
This is mainly because the diffeomorphism $\Phi_t$ of $M$ induced by the negative gradient vector field of $f$ is $G$-equivariant. More precisely, let $y$ be another point in $\gamma$. Since both $x$ and $y$ are not critical points of $f$, there exists $t$ such that $\Phi_t (x) = y$. Then, the $G$-equivariance of $\Phi_t$ implies $\Phi_t (g \cdot x ) = g \cdot \Phi_t (x)=g \cdot y$ for any $g \in G$, and hence $G_x = G_y$.
\end{proof}
Thus, we may denote the common isotropy group by $G_\gamma$. It is natural to define $G_{\bar{\gamma}}$ as the conjugacy class represented by $G_\gamma$ in $G$. Then, $|G_{\bar{\gamma}}|$ is well-defined. Note that the number of lifts of $\bar{\gamma}$ in $M$ is exactly $ |G| / |G_{\bar{\gamma}}| $. So, there exist $\left( \sum_{ \bar{\gamma}: \bar{p} \rightarrow \bar{q}} |G| / |G_{\bar{\gamma}}| \right) $-negative flow lines connecting critical points projecting down to $\bar{p}$ and $\bar{q}$.  We want the coefficient of $[\bar{q}]= \sum_{ q \in \pi^{-1} (\bar{q}) } q$ instead of that of single $q$. So, we  divide  $\sum_{\bar{ \gamma }: \bar{p} \rightarrow \bar{q}} |G| / |G_{\bar{\gamma}}| $ by the number of $q$'s in $\pi^{-1} (\bar{q})$. Note that all the coefficients of $q$'s in the sum are equal because of the symmetry coming from the $G$-action. Therefore,
\begin{eqnarray*}
 \partial [\bar{p}] &=& \sum_{\bar{q} \in \CR_{i-1}^+ (\bar{f})} \sum_{\bar{\gamma} : \bar{p} \rightarrow \bar{q}} \epsilon(\bar{\gamma}) \dfrac{1}{|\pi^{-1} (\bar{q})|} \dfrac{|G|}{|G_{\bar{\gamma}}|} \cdot[\bar{q}]  \\
&=&  \sum_{\bar{q} \in \CR_{i-1}^+ (\bar{f})} \sum_{\bar{\gamma} : \bar{p} \rightarrow \bar{q}}  \epsilon(\bar{\gamma}) \dfrac{|G_{\bar{q}}|}{|G|} \dfrac{|G|}{|G_{\bar{\gamma}}|} \cdot [\bar{q}] \\
&=&  \sum_{\bar{q} \in \CR_{i-1}^+ (\bar{f})} \sum_{\bar{\gamma} : \bar{p} \rightarrow \bar{q}}  \epsilon(\bar{\gamma})  \dfrac{|G_{\bar{q}}|}{|G_{\bar{\gamma}}|} \cdot[\bar{q}].
\end{eqnarray*}
Here, $G_{\bar{q}}$ is the conjugacy class of $G_q$, $q \in \pi^{-1} (\bar{q})$. The sum is taken over all orientable critical points $\bar{q}$ of index $k-1$. Also $ \epsilon(\bar{\gamma}) = \pm 1$ assigned to $\bar{\gamma}$ is determined by the sign convention explained before. From now on, we use $\bar{p}$ itself instead of $[\bar{p}]$ for simplicity. 

We denote
$$n(\bar{p},\bar{q}) :=\sum_{\bar{\gamma} : \bar{p} \rightarrow \bar{q}}  \epsilon(\bar{\gamma})  \frac{|G_{\bar{q} } |}{|G_{\bar{\gamma}}|} ,
\;\; \nu_{\bar{q}} (\bar{\gamma}):= \epsilon(\bar{\gamma})  \frac{|G_{\bar{q}}|}{|G_{\bar{\gamma}}|}. $$
 On a minimal chart around $\bar{q}$, the preimage of $\bar{\gamma}$ is $|\nu_{\bar{q}} (\bar{\gamma})|$ copies of gradient flow lines which is obtained by the $G_{\bar{q}}$-action to a single lifting $\gamma$.
 (By a minimal chart, we mean a chart $(\WT{U}_{\bar{q}}, G_{\bar{q}}, \pi_{\bar{q}})$ in which $\WT{U}_{\bar{q}}$ is a connected open subset of an Euclidean space equipped with a linear $G_{\bar{q}}$-action and we assume that there is  unique lifting $q$ of $\bar{q}$ which is the origin).

 So, $\nu_{\bar{q}} (\bar{\gamma})$ can be regarded as a multiplicity of $\bar{\gamma}$ at $\bar{q}$ and $n(\bar{p},\bar{q})$ can be seen as the number of negative gradient flow lines from $\bar{p}$ to $\bar{q}$ counted with multiplicity or weight. We also denote by $\nu_{\bar{p}} (\bar{\gamma}) = \epsilon(\bar{\gamma}) \frac{|G_{\bar{p}}|}{|G_{\bar{\gamma}}|} $ the number of liftings of $\bar{\gamma}$ in an uniformizing chart around $\bar{p}$ counting with signs.

Hence we obtain:
\begin{eqnarray}\label{fin def}
\partial \bar{p} &=&  \sum_{\bar{q}} \left( \sum_{\bar{\gamma} : \bar{p} \rightarrow \bar{q}}  \epsilon(\bar{\gamma})  \dfrac{|G_{\bar{q}}|}{|G_{\bar{\gamma}}|} \right) \bar{q} \\
&=& \sum_{\bar{q}}  \sum_{\bar{\gamma} : \bar{p} \rightarrow \bar{q}} \nu_{\bar{q}} (\bar{\gamma}) \,\, \bar{q} =  \sum_{\bar{q}} n(\bar{p},\bar{q}) \,\, \bar{q}. \nonumber
\end{eqnarray}

We emphasize that $\nu_{\bar{q}} (\bar{\gamma})$ and $n (\bar{p},\bar{q})$ make sense for an arbitrary orbifold $\bX$ with a  given Morse-Smale function. Namely, coefficients of  \eqref{fin def} are intrinsic, only involving data of the critical points of $\bar{f}$, gradient flow lines in the quotient space and local groups at critical points of $\bar{f}$. Note that if the group action is trivial we get the usual formula of the Morse boundary operator. 
In the next section, we shall define a Morse complex of a general orbifold with help of the above formula.

We would like to introduce an alternative formula of the Morse boundary operator which is also intrinsic in a similar sense. The formula is obtained simply by using
$$ \left<\bar{p} \right> := \frac{|G_{\bar{p}}|}{|G|} \sum_{ p \in \pi^{-1}} (\bar{p}) $$
instead of $[\bar{p}]$. Note that $\left< \bar{p} \right>$ can be seen as the average of $p$'s with respect to the $G$-action since $\frac{|G|}{|G_{\bar{p}}|}$ is the cardinality of the orbit containing $p$. With this slight modification of generators, the boundary operator becomes
\begin{equation}\label{underpar}
\partial \left<\bar{p} \right> =  \sum_{\bar{q} \in \CR_{i-1}^+ (\bar{f})} \sum_{\bar{\gamma} : \bar{p} \rightarrow \bar{q}}  \epsilon(\bar{\gamma})  \dfrac{|G_{\bar{p}}|}{|G_{\bar{\gamma}}|} \cdot \left< \bar{q} \right>,
\end{equation}
which is similar to \eqref{fin def}. We use $\underline{\partial}$ to denote the operator with respect to the above choice of generators $\left< p \right>$. 

The resulting homology group is isomorphic to the original one obviously via
$$\psi : \bar{p} \mapsto |G_{\bar{p}}| \cdot \bar{p},$$
which is a chain map with respect to $(\partial, \underline{\partial})$ (with $\RR$-coefficients).

\section{Some properties of Moduli spaces of gradient flow lines}\label{analpropMf}
From now on, let $\bX$ be a compact  oriented connected $n$-dimensional effective orbifold, which may not be a global quotient orbifold. It is still possible to choose a Morse function $\bar{f}$ on $\bX$ (see  \cite{H}) in the sense of definition \ref{orbM}.
\begin{definition}
A Morse function $\bar{f} : \bX \to \RR$ is called Morse-Smale if for $\bar{p}, \bar{q} \in \CR(\bar{f})$ and for $x \in W^- (\bar{p}) \cap W^+ (\bar{q})$, we have
\begin{equation}\label{Orb_Smalecond}
T_x W^- (\bar{p}) + T_x W^+ (\bar{q}) = T_x \bX.
\end{equation}
\end{definition} 
We will make the following assumption. (See Section \ref{equiv_trans} for more explanations.)
\begin{assumption}\label{AssMorseSmale}
We assume that $\bar{f} : \bX \to \RR$ is Morse-Smale \eqref{Orb_Smalecond}.
\end{assumption}
% As we have discussed in the beginning of Section \ref{MSW_GQ}, it is not always possible to perturb it to be a Morse-Smale function on $\bX$ where an orbifold Morse-Smale function defined as follows. 
%We postpone discussing this issue until Section \ref{equiv_trans} and focus on the analytic properties of orbifold Morse functions under the assupmtion \ref{AssMorseSmale}, which is analogous to Assumption \ref{AssMorseSmale_GQ} for the global quotient case. 
As before, we denote by $X$ the underlying quotient space of $\bX$ and by  $\bar{f}:X \to \RR$ the function induced from $f$.
We define the orientability of critical points as in Definition \ref{def:type} and Morse indices of critical points are defined by using local uniformizing charts. These two notions are independent of the choice of uniformizing charts. Again, we denote by $\CR_k^+ (\bar{f})$ the set of all orientable critical points of $\bar{f}$ of index $k$.
  
For effective orbifolds, analytic properties of $\bar{f}$ can be studied in the following way, which we learned from E. Lerman. For any orbifold $\bX$, its frame bundle $Fr(\bX)$ is a smooth manifold with a smooth, effective, and almost free $O(n)$-action. Then, $\bX$ is naturally isomorphic to the quotient orbifold $[Fr(\bX)/O(n)]$ (see of \cite[Theorem 1.23]{ALR}).  

We start from a general situation where a manifold $M$ is equipped with an action of compact Lie group $G$ which is smooth, effective and locally free. Denote $[M/G]$ by $\bX $ and let $\bar{f} : \bX\to \RR$ be an orbifold Morse-Smale function. Then, we can lift our Morse function $\bar{f}$ to a function $\tilde{f} : M \to \RR$ simply by setting $\tilde{f}:= \bar{f} \circ \pi$. 
\begin{lemma}
$\tilde{f} : M \to \RR$ obtained above is a Morse-Bott function which satisfies the Morse-Smale transversality condition.
\end{lemma}

\begin{proof}
Consider a point $\bar{p}$ and a uniformizing chart $(U_{\bar{p}}, G_{\bar{p}}, \pi_{\bar{p}})$ on which $\bar{f}$ is lifted as a local Morse function $f : U_{\bar{p}} \to \RR$. Let $O_{\bar{p}}$ denote the orbit corresponding to $\bar{p}$. Then, we can identify $U_{\bar{p}}$ with a normal slice to $O_{\bar{p}}$ at a point $p \in O_{\bar{p}}$ which lies over $\bar{p}$. If $\bar{p}$ is not a critical point of $\bar{f}$, then $d \tilde{f} (p) (v) = d f (p) (v) \neq 0$ for $p \in O_{\bar{p}}$ for some $v \in TM$ normal to $O_{\bar{p}}$ since $f$ is Morse. By $G$-invariance, $\tilde{f}$ is regular at all points in the $G$-orbit containing $p$. So, critical sets of $\tilde{f}$ are precisely $O_{\bar{p}} \cong G / G_{\bar{p}}$ for $\bar{p} \in crit(\bar{f})$ which are  submanifolds of $M$. 

Observe that the Hessian of $\tilde{f}$ in the normal direction to the critical submanifold $O_{\bar{p}}$ precisely equals to the Hessian of $f : U_{\bar{p}} \to \RR$ at $p$ (where we identify $U_{\bar{p}}$ with a normal slice of $O_{\bar{p}}$ at $p \in O_{\bar{p}}$). Since $f$ is Morse on $U_{\bar{p}}$, we conclude that the lift $\tilde{f}:M \to \RR$ is Morse-Bott.

Finally, we check the Morse-Smale condition for $\tilde{f}$. Consider two critical manifolds $O_{\bar{p}}$ and $O_{\bar{q}}$ associated with $\bar{p}$ and $\bar{q}$ in $\CR (\bar{f})$. Write $W^- (O_{\bar{p}})$ and $W^+(O_{\bar{q}})$ for the unstable manifold for $O_{\bar{p}}$ and the stable manifold for $O_{\bar{q}}$, respectively. For $x \in W^- (O_{\bar{p}}) \cap W^+(O_{\bar{q}})$, we have to show that 
\begin{equation}\label{MB_Smale}
T_x W^- (O_{\bar{p}}) + T_x W^+(O_{\bar{q}}) = T_x M
\end{equation}
Let $O_x$ be the $G$-orbit containing $x$. Then, since both $W^- (O_{\bar{p}})$ and $W^+(O_{\bar{q}})$ are closed under the $G$-action, the left hand side of \eqref{MB_Smale} contains $T_x O_x$. Therefore, it suffices to check \eqref{MB_Smale} after restrict it to the normal direction to the orbit $O_x$, which is equivalent to \eqref{Orb_Smalecond}.  
\end{proof}

The moduli space of gradient trajectories for $\tilde{f}$ on $M$ was already studied in depth. For example, the convergence and the gluing of gradient flow lines of Morse-Bott functions are discussed in \cite{AB} and \cite{BH1}. Making use of these nice features of $\tilde{f}$, we show that connected components of the moduli space of gradient trajectories of $\bar{f}$ can be compactified. Here, the moduli space of gradient trajectories is thought of as the quotient by the time translation action of $\RR$.

\begin{prop}\label{orbstrCP}
Consider two critical points $\bar{p}, \bar{q} \in crit^+(\bar{f})$ of index difference 2. Then, any connected component of the moduli space of gradient flow trajectories of $\bar{f} : \bX \to \RR$ from $\bar{p}$ to $\bar{q}$ can be compactified so that it becomes a compact oriented $1$-dimensional orbifold with boundary.
\end{prop}

\begin{proof}
Consider (negative) gradient flow lines from $\bar{p}$ to $\bar{q}$ and denote by $\mathcal{M} (\bar{p}, \bar{q})$ the set of all such flow lines modded out by the time translation. We want to  compactify a connected component $\mathcal{P}$ of $\mathcal{M} (\bar{p}, \bar{q})$ to get an 1-dimensional orbifold (with boundary) $\bar{\mathcal{P}}$. We will describe it as a quotient of a compact manifold with boundary by a locally free group action which preserves the boundary.

Define $\mathcal{M} (O_{\bar{p}}, O_{\bar{q}})$ to be the moduli space of all flow lines from $O_{\bar{p}}(:=\pi^{-1} (\bar{p}) )$ to $O_{\bar{q}}(:=\pi^{-1} (\bar{q}))$. Consider the preimage $\tilde{\mathcal{P}}$ of flow lines from $\bar{p}$ to $\bar{q}$ in the component $\mathcal{P}$ under the quotient map $\pi : M \to \bX$, which is a subset of $\mathcal{M} (O_{\bar{p}}, O_{\bar{q}})$. $\mathcal{M} (O_{\bar{p}}, O_{\bar{q}})$ admits a natural $G$-action as $\tilde{f}$ is $G$-invariant and the time translation action commutes with the $G$-action. Then, $\tilde{\mathcal{P}}$ is given by the union of several connected components of $\mathcal{M} (O_{\bar{p}}, O_{\bar{q}})$ which is closed under the $G$-action, since the $G$-action on $M$ is locally free and $\mathcal{P}$ is a connected component of $\mathcal{M}^{orb} (\bar{p}, \bar{q}):=[\mathcal{M} (O_{\bar{p}}, O_{\bar{q}})/G]$. 

If we denotes one of components of $\tilde{\mathcal{P}}$ by $\tilde{\mathcal{P}}_0$ and the maximal subgroup of $G$ preserving $\tilde{\mathcal{P}}_0$ by $H$, then $\tilde{\mathcal{P}}$ is the union of $[G : H]$-copies of $\tilde{\mathcal{P}}_0$. ($H$ should have a finite index in $G$ because $\tilde{\mathcal{P}}_0$ is a maximal connected subset of $\tilde{\mathcal{P}}$.) Now, $\mathcal{P}$ can be described as a quotient of $\tilde{\mathcal{P}}_0$ by the action of $H$.

We can compute the dimension of $\tilde{\mathcal{P}}_0$ precisely. Note that $\dim \tilde{\mathcal{P}}_0 = \dim \mathcal{M} (O_{\bar{p}}, O_{\bar{q}})$ and $\mathcal{M} (O_{\bar{p}}, O_{\bar{q}}) = W^- (O_{\bar{p}}) \cap W^+ (O_{\bar{q}}) / \RR$, where $W^- (O_{\bar{p}})$ and $W^+ (O_{\bar{q}})$ are unstable manifold for $O_{\bar{p}}$ and $O_{\bar{q}}$, respectively. As the $G$-action on $M$ is locally free,
\begin{equation*}
\begin{array}{rl}
\dim W^- (O_{\bar{p}}) &= \dim G + \mu (\bar{p}),\\
\dim W^+ (O_{\bar{q}})  &= \dim G + \dim W^+ (\bar{q}) \\
&=\dim G + (\dim \bX - \mu (\bar{q})).
\end{array}
\end{equation*}
Therefore,
\begin{eqnarray*}
\dim \mathcal{M} (O_{\bar{p}}, O_{\bar{q}}) &=&  \dim W^- (O_{\bar{p}}) + \dim W^+ (O_{\bar{q}} ) - \dim M -1\\ 
&=& \dim G + 1,
\end{eqnarray*}
and hence $\dim \tilde{\mathcal{P}}_0 = \dim G +1$, also.

The $G$-action on $\mathcal{M} (O_{\bar{p}}, O_{\bar{q}})$ is orientation preserving since the $G$-action on $W^+ (O_{\bar{p}})$ and $W^- (O_{\bar{q}})$ is orientation preserving. ($G$-actions on critical submanifolds obviously preserve orientations and so are they on normal directions because $\bar{p}, \bar{q} \in crit^+ (\bar{f})$.) Therefore, $H$ acts on $\tilde{\mathcal{P}}_0$ preserving its orientation as well.

We compactify $\tilde{\mathcal{P}}_0$ by adding broken trajectories from $\bar{p}$ to $\bar{q}$ as usual. (See \cite{AB}, \cite{BH} or, \cite{BH1}.) Suppose that the broken trajectory $(\gamma_1, \gamma_2)$ lies in the compacitification $\tilde{\mathcal{P}}_0^{cpt}$ of $\tilde{\mathcal{P}}_0$. Then, there exists a family of trajectories $\{\gamma_t \}_t \subset \tilde{\mathcal{P}}_0$ which converges to $ (\gamma_1, \gamma_2)$. For $h \in H$, $\{h \cdot \gamma_t \}_t$ is also contained in $\tilde{\mathcal{P}}_0$ by $H(\leq G)$-invariance of $\tilde{\mathcal{P}}_0$ and $\tilde{f}$. Moreover, they converges to $(h \cdot \gamma_1, h \cdot \gamma_2)$. This shows that the $H$-action on $\tilde{\mathcal{P}}_0$ can be extended naturally to $\tilde{\mathcal{P}}_0^{cpt}$.

In general, $\tilde{\mathcal{P}}_0^{cpt}$ has a structure of a manifold with corner, but in our case there are only codimension $1$ strata since $\mu (\bar{p}) - \mu (\bar{p}) = 2$. Note that $H$ preserves $\tilde{\mathcal{P}}_0^{cpt} \setminus \tilde{\mathcal{P}}_0$ since it sends broken trajectories to broken trajectories. Since, $\mathcal{P} \cong [\tilde{\mathcal{P}}_0 / H] \subset [\tilde{\mathcal{P}}_0^{cpt} / H]$, we can think of $[\tilde{\mathcal{P}}_0^{cpt} / H]$ as a compactificaton of $\mathcal{P}$. Denote $[\tilde{\mathcal{P}}_0^{cpt} / H]$ by $\bar{\mathcal{P}}$. Now, we show that the $H$-action on $\bar{\mathcal{P}}$ is locally free, which will imply that $\bar{\mathcal{P}}$ has a structure of an orbifold with boundary.

We claim that if $h \in H$ fixes $[\gamma] \in \tilde{\mathcal{P}}_0^{cpt}$ (i.e. if $h \cdot \gamma$ is a time translation of $\gamma$), then $h$ fixes every points on $\gamma$. Suppose to the contrary that two different points $x, y \in M$ are both on $\gamma$ and $h$ sends $x$ to $y$. Since $\gamma$ is a negative gradient flow line of $f$, $f(x) \neq f(y)$ which contradicts the fact that $f$ is $H$-invariant. So each $\gamma \in \bar{\mathcal{P}}$ has a finite isotropy group. Therefore, $\bar{\mathcal{P}}$ is an orbifold with boundary whose interior is isomorphic to $\mathcal{P}$. It is 1-dimensional since $\dim G = \dim H$ (i.e. $[G:H]$ is finite) and $\dim \tilde{\mathcal{P}}_0^{cpt} = \dim G +1$.
\end{proof}

\begin{remark}
$\bar{\mathcal{P}}$ is not an effective orbifold in general. See Lemma \ref{inteq} where we describe the oribfold structure of $\bar{\mathcal{P}}$ more precisely.
\end{remark}

Exactly the same argument in the proof can be applied to the case when $\mu (\bar{p}) - \mu (\bar{q}) =1$. (Even easier, since compactification with broken trajectories is not needed.) So, we get the following corollary.

\begin{corollary}
If $\mu (\bar{p}) - \mu (\bar{q}) =1$ for $\bar{p}, \bar{q} \in \CR^+ (\bar{f})$, then there are only finitely many negative gradient flow lines from $\bar{p}$ to $\bar{q}$.
\end{corollary}

Let us consider the $G$-action on the whole compactified moduli space $\overline{\mathcal{M}} (O_{\bar{p}}, O_{\bar{q}})$. It is also a union of several copies which are isomorphic to $\tilde{\mathcal{P}}_0^{cpt}$. However, at the boundary of each connected component, there are additional group action of $G \times G$ where $(g_1, g_2) \cdot ([\gamma_1, \gamma_2]):= ([g_1 \cdot \gamma_1], [g_2 \cdot \gamma_2])$. Note that two broken trajectories $([\gamma_1, \gamma_2])$ and $(g_1, g_2) \cdot ([\gamma_1, \gamma_2])$ have the same image in the quotient space $\bX= [M/G]$. This phenomenon is responsible for the strange shape of $\mathcal{M} (\bar{p}, \bar{q})$ as shown in (a) of Figure \ref{1dimmoduli}. (See Example \ref{starlike}.)

From now on, we omit ``$[ \,\, ]$" for simplicity and write $\gamma$ instead of $[\gamma]$.

 \section{Morse-Smale-Witten complexes of General Orbifolds}\label{sec:MSWcpxgen}

We are now ready to construct Morse complexes of orbifolds which are not necessarily global quotients. Suppose that we have a Morse-Smale function $\bar{f} : \bar{X} \to \RR$ (Assumption \ref{AssMorseSmale}) and let $CM_k (\bX, \bar{f})$ 
%change of notation%
be the $\RR$-vector space generated by $\CR_k^+ (\bar{f})$. By using the notation of \eqref{fin def}, we define
\begin{equation}\label{orbMorsebdy}
\partial \bar{p} = \sum_{\bar{q} \in \CR_{k-1}^+ (\bar{f})} n(\bar{p},\bar{q}) \, \bar{q} =  \sum_{\bar{q} \in \CR_{k-1}^+ (\bar{f})} \, \sum_{\bar{\gamma} \in \mathcal{M} (\bar{p},\bar{q})} \nu_{\bar{q}} (\bar{\gamma}) \, \bar{q},
\end{equation}
for $\bar{p} \in \CR_k^+ (\bar{f})$.
Then, the main theorem of this section is 
\begin{theorem}\label{thm:main}
$( CM_\ast (\bX, \bar{f}), \partial)$ defines a chain complex. i.e $\partial^2=0$
\end{theorem}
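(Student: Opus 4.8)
The plan is to reduce $\partial^2 = 0$ for the orbifold complex to the known identity $\partial^2 = 0$ for a suitable choice of local lifts, handled chart by chart along broken trajectories. Fix $\bar p \in \CR_k^+(\bar f)$ and $\bar r \in \CR_{k-2}^+(\bar f)$; the coefficient of $\bar r$ in $\partial^2 \bar p$ is
$$
\sum_{\bar q \in \CR_{k-1}^+(\bar f)} \sum_{\bar\gamma : \bar p \to \bar q} \ \sum_{\bar\delta : \bar q \to \bar r} \nu_{\bar q}(\bar\gamma)\, \nu_{\bar r}(\bar\delta),
$$
and I must show this vanishes. The key geometric input is the analysis (promised in the introduction for section 4, and which I would invoke here) of how broken trajectories $\bar\gamma \cup \bar\delta$ in $X$ arise as limits of one-parameter families of unbroken trajectories $\bar\eta : \bar p \to \bar r$: unlike the manifold case, a single broken trajectory downstairs can be the limit of several inequivalent families upstairs, the discrepancy being governed precisely by the isotropy groups $G_{\bar\gamma}$, $G_{\bar\delta}$, $G_{\bar q}$. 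So the first step is to make this bookkeeping precise: count, with signs, the ends of the compactified $1$-dimensional moduli space $\mathcal M(\bar p,\bar r)$ near the stratum $\{\bar\gamma\cup\bar\delta\}$, and match that count against $\nu_{\bar q}(\bar\gamma)\,\nu_{\bar r}(\bar\delta)$.

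Concretely, I would lift the picture to a uniformizing chart $(\WT U_{\bar q}, G_{\bar q}, \pi_{\bar q})$ around the intermediate critical point $\bar q$, with unique lift $q$ at the origin. By Lemma \ref{flow} the isotropy is constant along each flow line, so $|G_{\bar\gamma}|$ divides $|G_{\bar q}|$ and the preimage of $\bar\gamma$ in $\WT U_{\bar q}$ consists of $|G_{\bar q}|/|G_{\bar\gamma}|$ lifts, permuted transitively by $G_{\bar q}$; similarly for $\bar\delta$. Each lift $\gamma'$ of $\bar\gamma$ into $\WT U_{\bar q}$ can be glued to each lift $\delta'$ of $\bar\delta$ to produce a family of unbroken lifted trajectories, and the usual manifold gluing theorem applied in the chart shows each such pair $(\gamma',\delta')$ contributes exactly one end to the $1$-dimensional moduli space upstairs, with sign $\epsilon(\bar\gamma)\epsilon(\bar\delta)$ once orientations of unstable manifolds are fixed coherently. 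Projecting down, the $(|G_{\bar q}|/|G_{\bar\gamma}|)\cdot(|G_{\bar q}|/|G_{\bar\delta}|)$ lifted ends fall into $G_{\bar q}$-orbits, and counting orbits (equivalently, dividing by the appropriate stabilizer order) reproduces exactly the weight $\nu_{\bar q}(\bar\gamma)\nu_{\bar r}(\bar\delta)$ up to the global normalization already built into the definition of $n(\bar p,\bar q)$. The point is that the weights $\nu$ were rigged in section 3 precisely so that ``counting ends downstairs with multiplicity'' equals ``counting ends upstairs, $G$-equivariantly.''

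Once this local matching is established, the argument concludes as in the manifold case: the compactified moduli space $\overline{\mathcal M}(\bar p, \bar r)$ is a compact oriented $1$-manifold with boundary, its boundary is the union over $\bar q$ of the broken strata, the total signed count of boundary points is zero, and by the matching above that total signed count equals $\sum_{\bar q}\sum_{\bar\gamma,\bar\delta}\nu_{\bar q}(\bar\gamma)\nu_{\bar r}(\bar\delta)$, which is the coefficient of $\bar r$ in $\partial^2\bar p$. Since the complex is generated only by orientable critical points, I must also check that no ``illegal'' contributions through non-orientable $\bar q$ sneak in — but this is exactly the content of Lemma \ref{cancel} and its generalization: trajectories through a non-orientable intermediate point cancel in pairs under an orientation-reversing element of $G_{\bar q}$, so restricting the sum to $\CR_{k-1}^+(\bar f)$ loses nothing.

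The main obstacle I expect is the gluing/orientation bookkeeping in the second step: one must verify that the several inequivalent families of smooth trajectories converging to a single broken trajectory are counted with consistent signs, and that the combinatorial factor coming from the $G_{\bar q}$-orbit structure of lifts is exactly $|G_{\bar q}|/|G_{\bar\gamma}|$ and not, say, $|G_{\bar q}|/(|G_{\bar\gamma}|\cap|G_{\bar\delta}|)$ or some other variant — i.e. that the isotropy of a glued trajectory is $G_{\bar\gamma}\cap G_{\bar\delta}$ (which for a generic long glued trajectory is trivial, or at least controlled). Getting the orientation signs to line up with the conventions of \cite{GP} used in Lemma \ref{cancel}, uniformly across all charts, is where the real care is needed; the rest is a transcription of the standard Morse-homology $\partial^2=0$ proof into the orbifold chart language.
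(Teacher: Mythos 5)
There is a genuine gap, and it is exactly at the step you flag as "the main obstacle." Your outline is the same as the paper's in spirit (compactify the one-dimensional moduli space, analyze the broken strata, match weights against the coefficient of $\bar r$), but two things are unresolved and one claim is false.

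First, you assert that $\overline{\CM}(\bar p,\bar r)$ is "a compact oriented $1$-manifold with boundary" and that "the total signed count of boundary points is zero." The paper explicitly warns that this is \emph{not} the structure of the compactified moduli space in the orbifold case: a single broken trajectory $(\bar\gamma,\bar\delta)$ can be the boundary of several inequivalent one-parameter families (so the space is a join of intervals, not a manifold with boundary), and the different families converging to the same broken trajectory can carry non-isomorphic stabilizers. In particular, the naive "count boundary points and get zero" does not apply; one must count each boundary stratum of each family $\CP$ with weight $1/|G_{[\gamma,\delta]}|$ where $G_{[\gamma,\delta]} = G_\gamma \cap G_\delta$ is the isotropy of the glued trajectory. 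The cancellation statement one actually gets (equation \eqref{bdysum1} in the paper) is that the \emph{weighted} boundary of each $\overline{\CP}$ sums to zero, and this relies on Lemma \ref{bdy}, which shows that $G_{[\gamma,\delta]}$ coincides with the isotropy of the interior points of $\CP$ (hence is the same at both ends of that interval).

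Second, the passage from the geometric weighted boundary count to the algebraic coefficient $\sum_{\bar\gamma,\bar\delta}\nu_{\bar q}(\bar\gamma)\nu_{\bar r}(\bar\delta)$ is not "a transcription of the standard proof": it requires the orbit-counting (Burnside) Lemma \ref{count} to identify $\sum_{\CP}\omega_{\CP}$ with $\sum_{(\bar\gamma,\bar\delta)}\frac{\nu_{\bar q}(\bar\gamma)\nu_{\bar q}(\bar\delta)}{|G_{\bar q}|}$, together with the algebraic conversion $\nu_{\bar r}(\bar\delta)=\frac{|G_{\bar r}|}{|G_{\bar q}|}\nu_{\bar q}(\bar\delta)$. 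You correctly guess that the relevant stabilizer is $G_\gamma\cap G_\delta$, but your remark that this is "generically trivial or at least controlled" is not right — it is generally nontrivial, and the whole point of the weighted boundary is to divide by it. The non-orientable $\bar q$ cancellation you invoke via Lemma \ref{cancel} is indeed part of the paper's argument, so that part of your plan is sound.

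In short: right skeleton, but the decisive combinatorial lemmas (Lemma \ref{bdy}, the weighted boundary definition, and Burnside's lemma \ref{count}) that make the end-count match the chain-level coefficient are missing, and the "compact oriented $1$-manifold with boundary" premise they replace is false in this setting.
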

The proof of this theorem will occupy the rest of the section. 
Before we proceed for its proof, we explain the main difference from the case of smooth manifolds.

\begin{figure}[h]
\begin{center}
\includegraphics[height=2in]{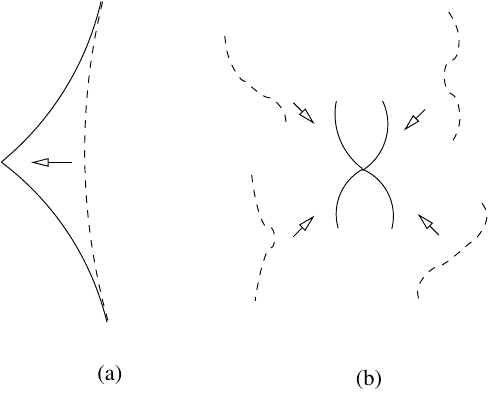}
\caption{limits of gradient flows for (a) manifolds  and (b) orbifolds}\label{pic:flow}
\label{flow1}
\end{center}
\end{figure}

Recall that the standard argument to show $\partial^2=0$ uses the compactifications of moduli spaces of negative gradient flow lines between critical points of index difference $2$. The same analysis yet works as we have seen in Proposition \ref{orbstrCP}, but there is a crucial difference which we shortly mentioned at the end of the previous section. Namely, a given broken trajectory (representing $\partial ^2$) on $X$ can become a limit of several families of trajectories which are distinct even after modding out by the local group action. (See Figure \ref{pic:flow}.) This phenomenon can be examined more clearly in the following example.

\begin{example}\label{starlike}
Let $\bar{\gamma}$ be a negative gradient flow line from $\bar{p}$ to $\bar{q}$ and $\bar{\delta}$ from $\bar{q}$ to $\bar{r}$. Assume for simplicity
\begin{equation}\label{ass:exex}
 G_{\bar{\gamma}} = G_{\bar{\delta}} = 1.
\end{equation}
 On an uniformizing neighborhood $(\WT{U}_{\bar{q}}, G_{\bar{q}}, \pi_{\bar{q}})$, there are $|G_{\bar{q}}|$-flow lines which lift $\bar{\gamma}$ and also $|G_{\bar{q}}|$-flow lines which lift $\bar{\delta}$. 

Choose $\gamma$ and $\delta$ to be one of the flow trajectories covering $\bar{\gamma}$ and $\bar{\delta}$ respectively in the cover $\WT{U}_{\bar{q}}$. Then, $(\gamma,\delta)$ lifts the broken trajectory $(\bar{\gamma},\bar{\delta})$ in $\bX$. For each pair $(g_1,g_2)$ in $G_{\bar{q}}$, $g_1 \cdot \gamma$ together with
$g_2 \cdot \delta$ give another broken trajectory in the cover which projects down to $(\bar{\gamma},\bar{\delta})$.

In this way, we find $|G_{\bar{q}}|^2$-broken trajectories in $\tilde{U}_{\bar{q}}$ lying over $(\bar{\gamma},\bar{\delta})$ and hence, we have $|G_{\bar{q}}|^2$-families of smooth gradient flow lines converging to one of $|G_{\bar{q}}|^2$-broken trajectories in $\WT{U}_{\bar{q}}$. From the assumption \eqref{ass:exex}, the $G_{\bar{q}}$-action on the set of broken trajectories in $\WT{U}_{\bar{q}}$ is free and so is on the set of (local) gluings in $\WT{U}_{\bar{q}}$. Therefore, we get $ |G_{\bar{q}}|(=|G_{\bar{q}}|^2 / |G_{\bar{q}}| )$-families after taking quotient by the $G_{\bar{q}}$-action. We see that there are $|G_{\bar{q}}|$ distinct families of smooth trajectories converging to a single broken trajectory $(\bar{\gamma}, \bar{\delta})$ near $\bar{q}$ in this case.
\end{example}

\begin{figure}[h]
\begin{center}
\includegraphics[height=1.7in]{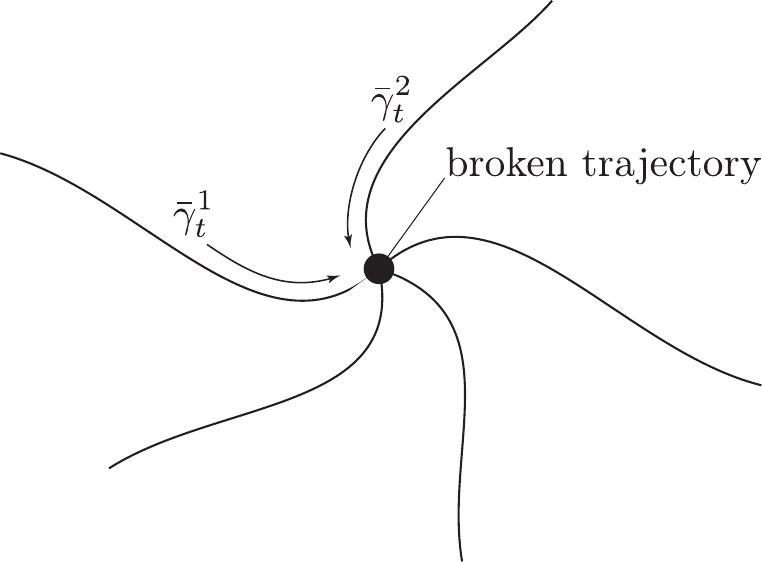}
\caption{Two limiting trajectories in the 1 dimensional moduli space of flow lines}\label{1dimmoduli}
\label{1moduli}
\end{center}
\end{figure}
Consider the moduli space of gradient flow trajectories
between critical points of index difference two, which is of dimension one. The above example
illustrates that near each broken trajectory the compactified moduli space looks like a join of several copies of interval $[0,1)$ at $0$'s ($0$ corresponds to the broken trajectory) equipped with a $G_{\bar{q}}$ action.
Recall that the uncompactified moduli space has a natural orbifold structure (Proposition \ref{orbstrCP}). This orbifold structure also can be understood locally since a local 1-parameter family of trajectories in $\mathcal{P}$ lies in a single uniformizing cover in a way compatible with the local group action. Observing locally around a breaking point of the broken trajectory, the orbifold structure of each limiting trajectory to a given broken trajectory may not be isomorphic to each other in general. In figure \ref{1dimmoduli}, two limiting trajectories $\{ \bar{\gamma}_t^1 \}$ and $\{ \bar{\gamma}_t^2 \}$ have the same broken trajectory as a limit, but the orbifold structures along $\{ \bar{\gamma}_t^1 \}$ and $\{ \bar{\gamma}_t^2 \}$ can be quite different. We will see that the difference of the orbifold structure gives rise to certain weights in the formula of the orbifold Morse boundary operator \eqref{orbMorsebdy}.\\

In order to prove the main theorem, we first prove a couple of lemmata on stabilizers of the gradient flow trajectories.\\

We set the notations as follows. Consider $\bar{p} \in \CR_{k}^+ (\bar{f}),  \bar{q},\bar{q}' \in \CR_{k-1}(\bar{f}),  \bar{r} \in \CR_{k-2}^+(\bar{f})$. Note that $\bar{q}$ and $\bar{q}'$ are not assumed to be orientable. Let $\bar{\gamma}$ (resp. $\bar{\gamma}'$)  be negative gradient flow lines from $\bar{p}$ to $\bar{q}$ (resp. $\bar{q}'$) and let $\bar{\delta}$(resp. $\bar{\delta}'$) be flow lines from $\bar{q}$ (resp. $\bar{q}'$) to $\bar{r}$. Suppose that two broken trajectories $(\bar{\gamma}, \bar{\delta})$ and $(\bar{\gamma}' , \bar{\delta}')$ are connected by 1-parameter family of negative gradient flow lines from $\bar{p}$ to $\bar{r}$. Take the set of flows lines in the above 1-parameter family  and call it $\CP$. 

\begin{remark}
Even if $\CP$ flows between two orientable critical points, a breaking point (either $\bar{q}$ or $\bar{q}'$) of a broken trajectory in the limit is not necessarily orientable. This is the reason why we did not impose any condition on the orientability of $\bar{q}$ and $\bar{q}'$. Indeed, example \ref{broken heart} already shows this phenomenon. We shall show that, however, the broken trajectory itself has a well defined sign as a boundary of $\CP$.
\end{remark}

\begin{lemma}\label{inteq}
$\CP$ is an one dimensional oriented orbifold whose stabilizers $G_{\bar{\gamma}}$ are all isomorphic for each $\bar{\gamma} \in \mathcal{P}$. (Thus, it is an ineffective orbifold for nontrivial $G_{\bar{\gamma}}$.)
\end{lemma}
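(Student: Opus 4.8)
The plan is to verify the three assertions of the lemma --- one-dimensionality, orientability, and constancy (up to isomorphism) of the stabilizers along $\CP$ --- by lifting the family $\CP$ to uniformizing charts and reducing everything to standard facts about gradient trajectories on manifolds, in the spirit of Lemma~\ref{flow}. First, one-dimensionality: by the Morse--Smale assumption, on each uniformizing chart the relevant moduli space of negative gradient trajectories between the two critical points of index difference two is cut out transversally, hence a smooth $1$-manifold (equivalently, one may use the almost free $G$-manifold presentation $X = M/G$ and the Morse--Bott moduli space of the lift $\tilde f$, whose relevant piece is $1$-dimensional by the results recalled from \cite{AB}); since $\CP$ is by hypothesis a connected one-parameter family of such trajectories, it is $1$-dimensional.

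For the stabilizers, recall that along any single negative gradient trajectory the isotropy is constant: this is exactly Lemma~\ref{flow}, whose proof only uses $G$-equivariance of the gradient flow, so for each $\bar\sigma \in \CP$ the conjugacy class $G_{\bar\sigma}$ (the class of $G_x$ for an interior point $x$ of a lift of $\bar\sigma$) is well defined. To show that $G_{\bar\sigma}$ does not vary with $\bar\sigma \in \CP$ it suffices, by connectedness of $\CP$, to prove that its isomorphism type is locally constant. Near an \emph{unbroken} $\bar\sigma_0 \in \CP$, pass to a uniformizing presentation of a neighborhood of the interior of $\bar\sigma_0$; the local moduli space there is a $1$-manifold $\widetilde{\CM}$ carrying an action of $H := G_{\bar\sigma_0}$ fixing $\bar\sigma_0$, and I claim $H$ acts trivially on $\widetilde{\CM}$ near $\bar\sigma_0$. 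Indeed, if $H$ acted through $-1$ on the tangent line $T_{\bar\sigma_0}\widetilde{\CM}$, then $\bar\sigma_0$ would be a reflector point of the orbifold $\widetilde{\CM}/H$, hence a terminal point of any one-parameter family through it; but $\bar\sigma_0$ lies in the interior of $\CP$, whose two terminal points are the given \emph{broken} trajectories, a contradiction. Since $H$ acts trivially near $\bar\sigma_0$ it fixes all nearby trajectories, so $H \subseteq G_{\bar\sigma}$ for $\bar\sigma$ near $\bar\sigma_0$, while the reverse inclusion (up to conjugacy) is upper semicontinuity of isotropy on $\widetilde{\CM}/H$; hence $G_{\bar\sigma} \cong H$. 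Near a \emph{broken} end of $\CP$, say a breaking at $\bar q$ with pieces $\bar\gamma$ from $\bar p$ and $\bar\delta$ to $\bar r$, work in the chart $(\WT U_{\bar q}, G_{\bar q}, \pi_{\bar q})$: as in Example~\ref{broken heart}, the members of $\CP$ close to the limit are obtained by gluing fixed lifts $\gamma,\delta$ of $\bar\gamma,\bar\delta$ in this chart, and the isotropy of such a glued trajectory $\gamma\#\delta$ is $G_\gamma \cap G_\delta$, namely the subgroup of $G_{\bar q}$ fixing it near both ends --- a fixed group, whose conjugacy class matches the value supplied by the unbroken analysis. Combining the two cases with connectedness of $\CP$ gives that all $G_{\bar\sigma}$, $\bar\sigma \in \CP$, are isomorphic.

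The orientation is produced chart-by-chart and then patched. On any uniformizing chart through which $\CP$ passes, the local moduli space is a $1$-manifold oriented by the usual Morse-theoretic rule (the sign convention recalled in the proof of Lemma~\ref{cancel}) from the fixed orientations of the unstable manifolds $W^-(p)$; since $\bar p$ and $\bar r$ are orientable we take these orientations to be $G_{\bar p}$- resp. $G_{\bar r}$-invariant, so that on overlaps of charts, and across the (possibly non-orientable) breaking points, the local orientations agree --- the orientation of a $1$-dimensional trajectory space being pinned down by the orientations at the two orientable ends together with the $G$-equivariant coorientations of the slices $f^{-1}(s)$. Hence these glue to an orientation of $\CP$, and each limiting broken trajectory acquires a well-defined sign as a boundary point. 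Finally, the common finite group $G_{\bar\sigma}$ acts trivially on the local $1$-dimensional model of $\CP$, by the triviality established above, so $\CP$ is presented by charts of the form (interval) $\times$ (point with $G_{\bar\sigma}$-action): this is precisely an oriented $1$-dimensional orbifold, ineffective exactly when $G_{\bar\sigma} \neq 1$.

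The genuine obstacle is the local isotropy analysis of the second paragraph: ruling out reflector points in the interior of $\CP$, and, at the broken ends, correctly matching the stabilizer of the glued trajectories with the generic one --- which in the non-abelian case requires keeping track of exactly which of the several gluing families near $\bar q$ constitutes $\CP$ (as the text notes, the orbifold structures of the various limiting families need not be mutually isomorphic). Everything else is a routine transcription of manifold Morse theory to uniformizing charts.
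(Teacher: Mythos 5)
The proof is correct and covers the same three points (one-dimensionality, constancy of stabilizers, orientability), but you organize the argument differently from the paper, and the difference is worth noting. The paper establishes orientability of $\CP$ first (using the $G_{\bar{p}}$- and $G_{\bar{r}}$-invariant orientations of $W^-(p)$, $W^-(r)$ and the sign conventions recalled in Lemma \ref{cancel}), and then deduces triviality of the local group action on a chart $\mathcal{T}$ of $\CP$ by observing that the only orientation-reversing diffeomorphism of an interval is $x\mapsto -x$, so an orientation-preserving finite group action on $\mathcal{T}$ must be trivial; constancy of stabilizers is then immediate. You instead go in the other order: you deduce triviality of the local action via a topological argument that a reflector point would be a terminal point of the one-parameter family, contradicting the hypothesis that the only two limit points of $\CP$ are the given broken trajectories; you then use this triviality to produce the orientation afterward. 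Both approaches reach the same conclusion. The paper's route is local and arguably more elementary, since it does not invoke the global topology of $\overline{\CP}$ (that it is a compact connected one-manifold with exactly two boundary points), whereas your reflector argument implicitly leans on that classification. That global fact does hold here, so your argument is valid, but it is a slightly heavier tool than the local orientation count, and it quietly uses Floer-type compactness (the only way for a family of unbroken trajectories to terminate is at a broken one). Your additional analysis near the broken ends is correct but goes beyond what this particular lemma requires; the paper defers that discussion to Lemmas \ref{CP} and \ref{bdy}.
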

\begin{proof}
%e first explain how to obtain a natural orbifold structure on $\CP$. Fix $\bar{\gamma} \in \CP$ and $t \in \RR$. Let $(\WT{U},G)$ be a uniformizing chart of some neighborhood $U \ni \bar{\gamma}(t)$ with the quotient map $\pi :\WT{U} \to U$ and $f$ be the lifting of $\bar{f}$ on $\WT{U}$.  Consider the level set of $f = \bar{f}\circ \pi$, given as $f^{-1} (\bar{f} (\bar{\gamma}(t))$.
%Consider the intersection of the level set $f^{-1}(\bar{f} (\gamma(t))$ with the corresponding gradient flow trajectories  (from $\mathcal{P}$) in $\WT{U}$ and denote the intersection by $\mathcal{T}$. From the standard Morse theory, $\mathcal{T}$ is an open one dimensional manifold. 
%Then, as we consider the moduli space (before compactification, after $\RR$-quotient), $\CP$ is given by $\mathcal{T}/G$ in the neighborhood $U$. Hence, we can give $\mathcal{P}$ an orbifold structure locally as a suborbifold of $\bX$. This does not depend on the choice of $t \in \RR$ by lemma \ref{flow}. And the resulting orbifold is oriented as we consider flows between orientable critical points as explained in section 2.
We already know that $\mathcal{P}$ is an oriented 1-dimensional orbifold from Proposition \ref{orbstrCP}. So, we only have to show that stabilizers are all isomorphic. But, this is clear since any connected one dimensional orientable orbifold satisfies such a property. Note that a finite group action on an interval say $(-1,1)$
is either identity or $x \mapsto -x$ up to diffeomorphism. The latter cannot be orientation preserving. Therefore local groups act trivially, and hence the stabilizers are isomorphic to each other.
\end{proof}

Recall from Proposition \ref{orbstrCP} that we have a natural compactification $\OL{\CP}$ of each component $\CP$ (in $\mathcal{M}^{orb} (\bar{p}, \bar{q})$) which is obtained by adding limit broken trajectories $(\bar{\gamma}, \bar{\delta})$, $(\bar{\gamma}', \bar{\delta}')$ to $\CP$.
We now look into the orbifold structure of $\OL{\CP}$ more in detail. In particular, we compare the stabilizers of the limiting trajectories and that of $\CP$.
 
 Consider the uniformizing chart $(\tilde{U}_{\bar{q}}, G_{\bar{q}}, \pi_{\bar{q}})$ around $\bar{q}$ with $U_{\bar{q}} = \pi_{\bar{q}} (\tilde{U}_{\bar{q}} )$. Let $\Gamma$ be the set of all liftings of $\bar{\gamma} \cap U_{\bar{q}}$ and $\Delta$ be that of $\bar{\delta} \cap U_{\bar{q}}$. Then $G_{\bar{q}}$ naturally acts on $\Gamma \times \Delta$ by the diagonal action.
Since there is a unique gluing for a given broken trajectory in the uniformizing cover, the quotient set  $\Gamma \times \Delta/G_{\bar{q}}$ by the diagonal action can be seen as the set of all possible smooth trajectories converging to $(\bar{\gamma}, \bar{\delta})$
 in $\bX$.  Here, we recognize that there can be several different gluings for the single broken trajectory $(\bar{\gamma}, \bar{\delta})$. In summary,
 
\begin{lemma}\label{CP}
 $\CP$ determines an element of $\Gamma \times \Delta/G_{\bar{q}}$, say  $ [ \gamma, \delta  ] \in \Gamma \times \Delta/G_{\bar{q}}$ and this correspondence is one to one locally around $\bar{q}$.
\end{lemma}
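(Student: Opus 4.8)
The plan is to set up an explicit bijection between $\CP$ (viewed near its broken end $(\bar\gamma,\bar\delta)$) and the orbit set $\Gamma\times\Delta/G_{\bar q}$, using the local gluing picture in the uniformizing chart $(\WT U_{\bar q},G_{\bar q},\pi_{\bar q})$. First I would pass to a uniformizing neighborhood of $\bar q$ small enough that every gradient trajectory passing near $\bar q$ has a well-defined entering leg and exiting leg; concretely, shrink $\WT U_{\bar q}$ so that $W^-(q)\cap f^{-1}(s_+)$ and $W^+(q)\cap f^{-1}(s_-)$ control all the relevant trajectories, for levels $s_-<\bar f(\bar q)<s_+$. A smooth trajectory $\bar\eta\in\CP$ close enough to the broken limit, when lifted to $\WT U_{\bar q}$, enters near a unique lift in $\Gamma$ and exits near a unique lift in $\Delta$, and the pair $(\gamma,\delta)$ so obtained is well-defined only up to the diagonal $G_{\bar q}$-action (since the choice of lift of $\bar\eta$ itself is ambiguous by $G_{\bar q}$). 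This produces a map $\CP\supset(\text{nbhd of the end})\to\Gamma\times\Delta/G_{\bar q}$, and since $\CP$ is connected and, by Lemma~\ref{inteq}, all its points have the same stabilizer, this map is locally constant — hence $\CP$ as a whole determines a single element $[\gamma,\delta]$.

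Next I would prove injectivity and surjectivity of the correspondence $\{\text{components }\CP\text{ limiting to }(\bar\gamma,\bar\delta)\}\to\Gamma\times\Delta/G_{\bar q}$. Surjectivity is exactly the standard gluing lemma applied in the cover: given any representative pair $(g_1\cdot\gamma,g_2\cdot\delta)$, the unique gluing in $\WT U_{\bar q}$ (invoked in the paragraph preceding the lemma, and justified because the lift $\tilde f$ on the frame bundle / slice model is Morse-Bott with the usual gluing theory) produces a one-parameter family of smooth lifted trajectories converging to it, whose image downstairs is a component $\CP$ mapping to $[g_1\gamma,g_2\delta]$. Injectivity is the uniqueness half of gluing: two components of the downstairs moduli space limiting to $(\bar\gamma,\bar\delta)$ and inducing the same orbit $[\gamma,\delta]$ lift to glued families in $\WT U_{\bar q}$ attached to the same pair of legs (after applying a suitable $g\in G_{\bar q}$), and uniqueness of the glued family forces these to coincide as orbifold-trajectory families near $\bar q$, hence to be the same $\CP$.

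The main obstacle I expect is bookkeeping the $G_{\bar q}$-ambiguity cleanly: one must check that the "enter near a lift of $\Gamma$, exit near a lift of $\Delta$" assignment really is canonical modulo the \emph{diagonal} action and not some larger ambiguity, which uses that a single lift of the whole trajectory $\bar\eta$ simultaneously fixes its entering and exiting legs, so only the global choice of lift of $\bar\eta$ (acted on by $G_{\bar q}$ diagonally) remains. A secondary subtlety is that one should phrase gluing in the cover where the picture is a genuine Morse-Bott manifold statement and only afterwards quotient by $G_{\bar q}$; this is legitimate because, as remarked before the lemma, gluing and convergence are local statements carried out in uniformizing charts, and the chart $\WT U_{\bar q}$ is exactly where the relevant data lives. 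Once these two points are nailed down, the statement "$\CP$ determines $[\gamma,\delta]\in\Gamma\times\Delta/G_{\bar q}$ and this correspondence is one to one locally around $\bar q$" follows immediately.
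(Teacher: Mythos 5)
Your proposal is correct and follows essentially the same route the paper takes, which leaves the argument implicit in the paragraph preceding the lemma: uniqueness of the glued family in the uniformizing cover, together with the diagonal $G_{\bar q}$-ambiguity in the choice of lift of a nearby smooth trajectory, is exactly what produces the bijection onto $\Gamma\times\Delta/G_{\bar q}$. You have simply made explicit what the paper asserts in passing, including the key bookkeeping point that lifting the whole trajectory forces the ambiguity on the two legs to be diagonal rather than independent.
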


Now, consider the isotropy groups $G_{\gamma}$ and $G_\delta$ of $\gamma$ and $\delta$. Their intersection  $G_{\gamma} \cap G_{\delta} \subset G_{\bar{q}}$ is regarded as the isotropy group at the boundary point $(\bar{\gamma}, \bar{\delta})$ of $\CP$. We denote its conjugacy class by $G_{[\gamma,  \delta ]}$. In general, the limit of isotropy groups are always a subgroup of the isotropy group at the limit point. For the moduli space of gradient flow trajectories,  we also have the converse, and it will be crucial in proving $\partial^2=0$.

\begin{lemma}\label{bdy}
$G_{[\gamma, \delta ]} \cong G_{\bar{c}}$  for any $\bar{c} \in \mathcal{P}$.
\end{lemma}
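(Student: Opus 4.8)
The plan is to exploit the orbifold structure of $\OL{\CP}$ together with Lemma \ref{inteq}. By Lemma \ref{inteq}, all stabilizers $G_{\bar{\gamma}}$ for $\bar{\gamma} \in \CP$ are mutually isomorphic; call this common conjugacy class $G_{\bar{x}}$. On the other hand, the limit of isotropy groups along a convergent sequence is always contained (up to conjugacy) in the isotropy group of the limit point, which gives the easy inclusion $G_{\bar{x}} \hookrightarrow G_{[\gamma,\delta]}$ up to conjugacy — concretely, for any $g \in G_{\bar\gamma}$ one tracks $g$ through the gluing construction in $\WT{U}_{\bar q}$ to see that it fixes the broken trajectory $(\bar\gamma,\bar\delta)$, i.e. $g \in G_\gamma \cap G_\delta$ after passing to the appropriate chart. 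So the content of the lemma is the reverse inclusion $G_{[\gamma,\delta]} \hookrightarrow G_{\bar x}$: every element of $G_\gamma \cap G_\delta$ must already act on nearby smooth trajectories in $\CP$.

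First I would set up the picture in the uniformizing chart $(\WT{U}_{\bar q}, G_{\bar q}, \pi_{\bar q})$ around $\bar q$, lifting $\gamma$ and $\delta$ to honest gradient trajectory segments in $\WT{U}_{\bar q}$ meeting at the unique lift $q$ of $\bar q$. Pick $g \in G_\gamma \cap G_\delta$; then $g$ fixes the lifts $\gamma$ and $\delta$ pointwise (as an element of $G_\gamma$ it fixes every point of $\mathrm{im}\,\gamma$, similarly for $\delta$), hence fixes the whole broken trajectory $(\gamma,\delta)$. Next I invoke the local gluing lemma, which is equivariant: since $g$ is a diffeomorphism of $\WT{U}_{\bar q}$ preserving the lifted Morse function $f$ and fixing $(\gamma,\delta)$, it carries the one-parameter family of glued smooth trajectories $\{\gamma_s\}_{s \in [0,\epsilon)}$ converging to $(\gamma,\delta)$ to another such family converging to the same broken trajectory. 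By the uniqueness part of the gluing theorem in the cover (there is a unique glued family converging to a given broken trajectory in $\WT{U}_{\bar q}$, as used in Lemma \ref{CP}), $g$ must fix this family setwise, and since it fixes $s$ (it preserves $f$, hence the level sets, hence the gluing parameter), it fixes each $\gamma_s$. Projecting back down, this says $g$ lies in $G_{\bar\gamma_s}$ for the corresponding $\bar\gamma_s \in \CP$; hence $G_\gamma \cap G_\delta \subseteq G_{\bar\gamma_s} = G_{\bar x}$ up to conjugacy, which is the desired inclusion.

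Combining the two inclusions gives $G_{[\gamma,\delta]} \cong G_{\bar x}$ as conjugacy classes of subgroups of $G_{\bar q}$, which is the statement. I expect the main obstacle to be making the equivariance and uniqueness of the local gluing construction precise enough to justify that $g$ fixes each glued trajectory $\gamma_s$ rather than merely permuting the family: one needs that the gluing parameter $s$ is intrinsically determined (e.g. by the value of $f$ at the gluing neck, or by the ``length of the gluing region''), so that a function-preserving symmetry cannot move it. A secondary subtlety is the non-abelian case flagged in the remark before Lemma \ref{inteq} — different limiting trajectories to the same broken trajectory may carry non-isomorphic orbifold structures — but this does not affect the present lemma, which fixes a single $\CP$ and its single limit $(\bar\gamma,\bar\delta)$; one only needs the argument to be local around $\bar q$ and around the fixed trajectory, exactly as Lemma \ref{CP} is stated.
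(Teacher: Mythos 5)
Your proposal is correct and takes essentially the same route as the paper's proof: the easy containment $G_{\bar{x}} \subset G_{[\gamma,\delta]}$ follows from the continuity of isotropy groups along the converging family (the paper realizes this by intersecting $\WT{\CP}$ with two slices of $f$ meeting $\gamma$ and $\delta$), and the reverse containment comes from the uniqueness of gluing in $\WT{U}_{\bar q}$ — if some $g \in G_{\gamma}\cap G_{\delta}$ did not fix $\WT{\CP}$, then $g\cdot\WT{\CP}$ would be a second family converging to the same lifted broken trajectory $(\gamma,\delta)$, contradicting the local gluing theorem. Your added remark that $g$ must also preserve the gluing parameter (so that it fixes each glued trajectory individually, not merely the family as a set) tightens a step the paper leaves implicit, but it is a refinement of the same argument rather than a different one.
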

\begin{proof}
We prove that $G_{[\gamma, \delta] } \cong G_{\bar{c}}$, for sufficiently close $\bar{c}$ to the boundary point $(\bar{\gamma}, \bar{\delta})$. This will be enough by lemma \ref{inteq}. Take a uniformizing neighborhood around $\bar{q}$, $(\WT{U}_{\bar{q}}, G_{\bar{q}}, \pi_{\bar{q} })$ and consider the lifting $\WT{\mathcal{P}}$ of one parameter family converging to one of liftings $(\gamma, \delta)$ of $(\bar{\gamma}, \bar{\delta})$. By taking two different slice of $f$ meeting ${\gamma}$ and ${\delta}$ respectively, the usual continuity argument shows that $G_{x} \subset G_{\gamma}$ and $G_{x} \subset G_{\delta}$ for $x$ (in the slice of $f$) with $\pi_{\bar{q}} (x) \in {\rm Im} \, \bar{c}$. Therefore $G_{\bar{c}} \subset G_{[\gamma, \delta]}$.

Conversely, assume there exists $g \in G_{\bar{q}}$ which fixes $(\gamma, \delta)$ but does not fix $\WT{\mathcal{P}}$. Then, $g \cdot \WT{\mathcal{P}}$ would be a different family from $\WT{\mathcal{P}}$ converging to the same limit $(\gamma, \delta )$. This is impossible in the standard Morse theory point of view on the uniformizing chart.
\end{proof}

\begin{remark}
Note that $G_{\gamma}$'s are conjugate to each other for liftings $\gamma$ of $\bar{\gamma}$, but the intersection $G_{\gamma} \cap G_{\delta}$ depends on each choice of lifts and its cardinality may also depend on the choice of lifts.
\end{remark}

Therefore, the set $\overline{\mathcal{P}}$ is an ineffective orbifold, where the same isotopy group acts on every points trivially. Also, it carries a natural orientation. We will prove that the natural orientation at the boundary broken trajectories of $\overline{\mathcal{P}}$ are opposite to each other, using almost the same argument in Morse theory for smooth manifolds. To do this, we first introduce a sign rule for the ``boundary" of $\overline{\CP}$ by showing 
that $ (\Gamma \times \Delta ) / G_{\bar{q}}$ inherits signs from $\Gamma \times \Delta$ naturally. Namely, we have:

\begin{lemma}
For $(\gamma, \delta) \in \Gamma \times \Delta$, define $\epsilon(\gamma, \delta)$ as the product of $\epsilon(\gamma)$ and $\epsilon(\delta)$. Regardless of $\bar{q}$ being orientable or not, the $G_{\bar{q}}$-action on $\Gamma \times \Delta$ preserves $\epsilon (\gamma, \delta)$. i.e. 
$$\epsilon (g \cdot \gamma, g \cdot \delta) = \epsilon(\gamma, \delta)$$
for all $g \in G_{\bar{q}}$.
\end{lemma}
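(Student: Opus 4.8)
The plan is to carry out the sign bookkeeping from the proof of Lemma~\ref{cancel}, this time at the intermediate critical point $\bar q$. Fix $g\in G_{\bar q}$. If $g$ preserves the chosen orientation of $W^-(q)$, I will check directly that it preserves $\epsilon(\gamma)$ and $\epsilon(\delta)$ separately, so there is nothing to prove. If $g$ reverses that orientation -- which can only occur when $\bar q$ is non-orientable -- I will show that $g$ flips \emph{both} $\epsilon(\gamma)$ and $\epsilon(\delta)$, so that the product $\epsilon(\gamma,\delta)=\epsilon(\gamma)\epsilon(\delta)$ is again unchanged. This handles both cases in the statement.

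First I would rewrite the two signs using slices, exactly as in the proof of Lemma~\ref{cancel}. Working in the chart $(\WT{U}_{\bar q},G_{\bar q},\pi_{\bar q})$ with $f$ the lift of $\bar f$, pick a level $s$ just above $\bar f(\bar q)$ and a level $s'$ just below it, chosen so that the relevant slices lie in $\WT{U}_{\bar q}$. Then $\epsilon(\gamma)$ is the intersection sign of $S^-(p)$ against $S^+(q)=W^+(q)\cap f^{-1}(s)$ inside $f^{-1}(s)$, and $\epsilon(\delta)$ is the intersection sign of $S^-(q)=W^-(q)\cap f^{-1}(s')$ against $S^+(r)$ inside $f^{-1}(s')$, all $S^{\pm}$'s being oriented from the fixed orientations of the unstable manifolds together with the convention $[W^+][W^-]=[M]$. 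The observation I want to extract is that both spheres at $q$ feel a reversal of the orientation of $W^-(q)$: $S^-(q)$ is oriented directly from $W^-(q)$, while $S^+(q)$ is oriented from $W^+(q)$, whose orientation is forced to reverse when that of $W^-(q)$ does, since $[W^+(q)][W^-(q)]=[M]$ and $[M]$ is fixed.

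Next I would run the equivariance computation, which is verbatim the one in the proof of Lemma~\ref{cancel}. As $g\in G_{\bar q}$ fixes $q$, it maps $W^{\pm}(q)$ to themselves; as $g$ preserves the orientation of $\bX$ and $\bar f\circ g=\bar f$, it preserves the orientation of each slice $f^{-1}(s)$ and $f^{-1}(s')$; and as $\bar p$ and $\bar r$ are orientable, the chosen orientations on their unstable manifolds are $G$-invariant, so $g\cdot[S^-(p)]=[S^-(g\cdot p)]$ and $g\cdot[S^+(r)]=[S^+(g\cdot r)]$. Transporting the intersection that computes $\epsilon(\gamma)$ through $g$ and using that $g$ reverses $[S^+(q)]$ gives $\epsilon(g\cdot\gamma)=-\epsilon(\gamma)$; transporting the intersection that computes $\epsilon(\delta)$ and using that $g$ reverses $[S^-(q)]$ gives $\epsilon(g\cdot\delta)=-\epsilon(\delta)$; multiplying yields $\epsilon(g\cdot\gamma,g\cdot\delta)=(-\epsilon(\gamma))(-\epsilon(\delta))=\epsilon(\gamma,\delta)$.

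The part needing the most care -- the ``hard part'' -- is isolating this double sign flip at $\bar q$: a single $g$ that reverses the orientation of $W^-(q)$ reverses simultaneously the unstable sphere $S^-(q)$ governing $\epsilon(\delta)$ and the stable sphere $S^+(q)$ governing $\epsilon(\gamma)$. This cancellation is exactly what will allow us to attach a well-defined sign $\epsilon[\gamma,\delta]$ to each $G_{\bar q}$-orbit in $\Gamma\times\Delta$, as needed in the proof of $\partial^2=0$, despite the limiting broken trajectory possibly running through a non-orientable critical point $\bar q$. Everything else is a transcription of the sign rules recorded in the proof of Lemma~\ref{cancel}.
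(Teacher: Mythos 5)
Your proof is correct and follows essentially the same route as the paper, which simply notes that the argument is "exactly the same" as in Lemma~\ref{cancel}: if $g$ reverses the orientation of $W^-(q)$, then it reverses both $[S^+(q)]$ (via $[W^+(q)][W^-(q)]=[M]$) and $[S^-(q)]$, flipping $\epsilon(\gamma)$ and $\epsilon(\delta)$ simultaneously so that the product survives. You spell out the bookkeeping in more detail and, in fact, correctly invoke the orientability of $\bar p$ and $\bar r$ (the endpoints of the broken trajectory) where the paper's printed proof says "$\bar p$ and $\bar q$"—an apparent typo, since $\bar q$ is precisely the point that may fail to be orientable.
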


\begin{proof}
If $\bar{q}$ is orientable, we can give $\bar{\gamma}$ and $\bar{\delta}$ well-defined signs. Clearly, $\epsilon[\gamma, \delta] = \epsilon(\bar{\gamma}) \cdot \epsilon(\bar{\delta})$ for all $[\gamma, \delta] \in \Gamma \times \Delta / G_{\bar{q}}$ since $G_{\bar{q}}$-action preserves all signs in concern.

Suppose $g \in G_{\bar{q}}$ reverses the orientation of $W^- (q)$. Since $\bar{p}$ and $\bar{q}$ are both orientable, exactly the same argument in Lemma \ref{cancel} shows that $\epsilon (g \cdot \gamma) = - \epsilon(\gamma)$ and $\epsilon (g \cdot \delta) = - \epsilon(\delta)$. This proves the lemma.
\end{proof}

From the lemma, the following sign rule makes sense.

\begin{definition}
For $[ \gamma, \delta] \in \Gamma \times \Delta / G_{\bar{q}}$, we assign a sign to it by letting
$$\epsilon [\gamma, \delta] := \epsilon(\gamma, \delta) =  \epsilon(\gamma) \cdot \epsilon(\delta).$$
\end{definition}

As a result, the orientation of broken trajectories $(\bar{\gamma}, \bar{\delta})$, as a boundary of any component $\CP$ converging to it, is equally given by $\epsilon(\bar{\gamma}) \cdot \epsilon(\bar{\delta})$. Consequently, the orientation issue of $\overline{\CP}$ can be rephrased as follows:

\begin{lemma}
 If there is one parameter family $\mathcal{P}$ which corresponds to $[\gamma, \delta]$ and $[\gamma', \delta']$ in the sense of lemma \ref{CP}, $\epsilon[\gamma,\delta]$ and $\epsilon[\gamma',\delta']$ should be opposite to each other. 
\end{lemma}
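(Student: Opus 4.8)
The plan is to reduce the statement to the standard Morse-theoretic fact that the two ends of a compactified one-dimensional moduli space carry opposite boundary orientations, applied now on a uniformizing cover. First I would lift the entire one-parameter family $\CP$ to a uniformizing chart around the breaking point. Concretely, choosing the chart $(\WT{U}_{\bar q}, G_{\bar q}, \pi_{\bar q})$ around $\bar q$ and the chart around $\bar q'$, Lemma \ref{CP} identifies the end of $\CP$ near $\bar q$ with a class $[\gamma,\delta] \in \Gamma \times \Delta / G_{\bar q}$, and similarly the other end with $[\gamma',\delta'] \in \Gamma' \times \Delta' / G_{\bar q'}$. By Lemma \ref{bdy}, the stabilizer is constant along $\CP$ and equals (up to conjugacy) $G_{[\gamma,\delta]} = G_\gamma \cap G_\delta$ at one end and $G_{[\gamma',\delta']}$ at the other; in particular the lifted family $\WT{\CP}$ in the chart is an honest one-dimensional manifold-with-boundary (after quotienting by the trivially-acting common stabilizer), with the two boundary points being the lifted broken trajectories $(\gamma,\delta)$ and (a lift of) $(\gamma',\delta')$.

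Next I would invoke the usual gluing/compactness picture on this manifold cover: $\WT{\CP}$ is diffeomorphic to a half-open interval near each end, and its closure is a compact oriented $1$-manifold with boundary $\{(\gamma,\delta)\} \sqcup \{(\gamma',\delta')\}$. The orientation on $\CP$ (hence on its lift) comes, as in Lemma \ref{inteq} and Section 2, from the chosen $G$-invariant orientations on the unstable manifolds at the orientable endpoints $\bar p$ and $\bar r$; this is exactly the orientation used in the ordinary Morse complex on the cover. The standard sign rule in Morse homology then says precisely that the two boundary points of a compact oriented $1$-dimensional moduli space contribute with opposite signs, i.e. the induced boundary orientation at $(\gamma,\delta)$ is opposite to that at $(\gamma',\delta')$. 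Finally, by the lemma immediately preceding this statement, the induced boundary sign at the end near $\bar q$ is $\epsilon(\gamma)\cdot\epsilon(\delta) = \epsilon[\gamma,\delta]$ and similarly $\epsilon[\gamma',\delta']$ at the other end, so $\epsilon[\gamma,\delta] = -\epsilon[\gamma',\delta']$, which is what we want.

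I expect the main obstacle to be bookkeeping of orientations when passing between charts: the ends of $\CP$ near $\bar q$ and near $\bar r$ live in different uniformizing neighborhoods, so I must check that the orientation on $\CP$ defined locally in $\WT{U}_{\bar q}$ agrees with the one defined locally in $\WT{U}_{\bar r}$ (this is where the $G$-invariant choices of orientations on the unstable manifolds, together with the already-established well-definedness of $G_{\bar\gamma}$ along a flow line via Lemma \ref{flow}, are used to patch). A secondary subtlety is that $\bar q$ (or $\bar q'$) may be non-orientable, so $\bar\gamma$ and $\bar\delta$ individually need not have signs downstairs; but the preceding lemma has already shown that the \emph{product} $\epsilon[\gamma,\delta]$ is $G_{\bar q}$-invariant and hence well-defined, so this causes no trouble — one works with the lifts $\gamma,\delta$ in the cover throughout, and only the product descends. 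Once the orientation-patching is in place, the conclusion is immediate from the classical $1$-manifold-with-boundary argument carried out on the cover.
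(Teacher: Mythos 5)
Your proposal is correct and follows essentially the same route the paper takes (the paper's own proof is just a one-line reference to the classical argument, which you are expanding): realize $\OL{\CP}$ as a compact oriented $1$-manifold with boundary (after stripping the trivially-acting stabilizer, using Lemmas \ref{inteq} and \ref{bdy}), apply the classical fact that the two boundary points of such a manifold carry opposite boundary orientations, and identify the boundary orientation at each end with $\epsilon[\gamma,\delta]$ via the preceding lemma. One small slip: in your "main obstacle" paragraph the two ends of $\CP$ live in charts around the two intermediate breaking points $\bar{q}$ and $\bar{q}'$, not around $\bar{q}$ and $\bar{r}$ — but your point about needing the orientation on $\CP$ to be globally consistent (which is precisely what Lemma \ref{inteq} secures, by defining it via oriented level-set slices that patch along the flow) is the right one, and once that is in place the classical $1$-manifold-with-boundary argument applies verbatim.
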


As we mentioned, the proof is not at all different from the classical one. (See \cite{AB} for example). This sign cancellation directly proves $\partial^2 =0$ in smooth case. However, in order to count gradient flow trajectories and to describe cancellation phenomenon in orbifold case correctly, we should take into account the orbifold structure additionally as we take ``weighted" sums for the Morse boundary operator for orbifolds \eqref{orbMorsebdy}.

\begin{definition}
For the compactification  $\overline{\mathcal{P}}$ as above, 
the following expression will be called the weighted boundary of  $\overline{\mathcal{P}}$:
$$\partial \OL{\mathcal{P}} = \dfrac{ \epsilon[\gamma, \delta] }{|G_{[\gamma, \delta]}|} (\bar{\gamma}, \bar{\delta}) + \dfrac{\epsilon[\gamma', \delta']}{|G_{[\gamma', \delta']}|} (\bar{\gamma}', \bar{\delta}').$$
We call the numbers $\frac{ \epsilon[\gamma, \delta] }{|G_{[\gamma, \delta]}|} $, $\frac{\epsilon[\gamma', \delta']}{|G_{[\gamma', \delta']}|}$ the weights  and write them as $\omega_{\mathcal{P}}(\bar{\gamma}, \bar{\delta})$, $\omega_{\mathcal{P}}(\bar{\gamma}', \bar{\delta}')$, respectively.
\end{definition}

Now, the standard arguments of proving $\partial^2=0$ in the smooth case together with the above choice of weights give the following equation. (Lemma \ref{bdy})
\begin{equation}\label{bdysum1}
\sum_{(\bar{\zeta},\bar{\eta}) \in \partial \OL{\mathcal{P}}} \omega_{\mathcal{P}}( \bar{\zeta},\bar{\eta} ) = 0
\end{equation}

Denote by $\OL{\CM}(\bar{p},\bar{r})$  the compactified moduli space of negative gradient flow lines from $\bar{p}$ to $\bar{r}$.
As explained in the beginning of the section, geometrically this is given by several copies of
compact intervals (equipped with trivial actions of corresponding isotropy groups) which
are possibly joined at boundary points if they define families of flow lines sharing the same limit.
Also note that the limiting flows to a fixed broken trajectory might have non-isomorphic stabilizers
by Lemma \ref{bdy}. So we cannot really think of $\OL{\CM}(\bar{p},\bar{r})$ as an orbifold with boundary.
This is somewhat different from the smooth case where compactified moduli spaces are
manifolds with corners.
Denote 
 $\partial \OL{\mathcal{M}} (\bar{p},\bar{r}): = \OL{\mathcal{M}} (\bar{p},\bar{r}) - \CM(\bar{p},\bar{r})$.

\begin{definition}
 If $ (\bar{\zeta},\bar{\eta}) \in \partial \OL{\mathcal{M}}(\bar{p},\bar{r})$,  we define
\begin{equation}\label{weight}
\omega ( \bar{\zeta},\bar{\eta})  := \sum_{\mathcal{P} \, \textrm{with}\, (\bar{\zeta},\bar{\eta}) \in \partial \OL{\mathcal{P}}} \omega_{\mathcal{P}} (\bar{\zeta},\bar{\eta}),
\end{equation}
where the sum is taken over all 1-parameter family $\mathcal{P}$  one of whose boundary is $(\bar{\zeta},\bar{\eta})$. Finally, we denote  the sum of all weights associated to the gluings converging to one of broken trajectories through $\bar{p}$, $\bar{q}$ and $\bar{r}$ as
\begin{equation}\label{brk}
\omega(\bar{p},\bar{q},\bar{r}) := \sum_{(\bar{\zeta},\bar{\eta}) \in \mathcal{M}(\bar{p},\bar{q}) \times \mathcal{M}(\bar{q},\bar{r} )} \omega(\bar{\zeta},\bar{\eta}).
\end{equation}
For $ \partial \OL{\mathcal{M}}(\bar{p},\bar{r})$ (which is the set of all broken trajectories from $\bar{p}$ to $\bar{r}$), note that we have $\mathcal{M}(\bar{p},\bar{q}) \times \mathcal{M}(\bar{q},\bar{r}) \subset \partial \OL{\mathcal{M}} (\bar{p},\bar{r})$ for any $\bar{q}$.
\end{definition}

Since all intervals contained $\OL{\mathcal{M}} (\bar{p},\bar{r})$ are oriented so that they are compatible with their boundary orientations, all terms in the sum of \eqref{weight} have the same signs.

 From \eqref{bdysum1}, we get the following equality.
\begin{equation}\label{bdysum2}
\sum_{\bar{q} \in \CR_{k-1} (\bar{f})} \omega(\bar{p},\bar{q},\bar{r}) =\sum_{(\bar{\zeta},\bar{\eta}) \in \partial \OL{\mathcal{M}} (\bar{p},\bar{r})} \omega(\bar{\zeta},\bar{\eta})= \sum_{\mathcal{P}} \sum_{(\bar{\zeta},\bar{\eta}) \in \partial \OL{\mathcal{P}} } \omega_{\mathcal{P}} (\bar{\zeta},\bar{\eta}) = 0
\end{equation}

Figure \ref{moduli} below explains how we sum up weighted contributions near orientable critical points.
Paths in the figure represent  (oriented) 1-dimensional moduli spaces, and these converge to broken trajectories, which are drawn as $\circ$'s. 
In $(b)$ of Figure \ref{moduli},  $\lambda_1 + \lambda_2+ \lambda_3 +\lambda_4  + \lambda_5$ contributes 
to  $\omega (\bar{p}, \bar{q}, \bar{r})$, and summation on a neighboring dotted circle contributes
to $\omega (\bar{p}, \bar{q}', \bar{r})$.

\begin{figure}[h]
\begin{center}
\includegraphics[height=2.8in]{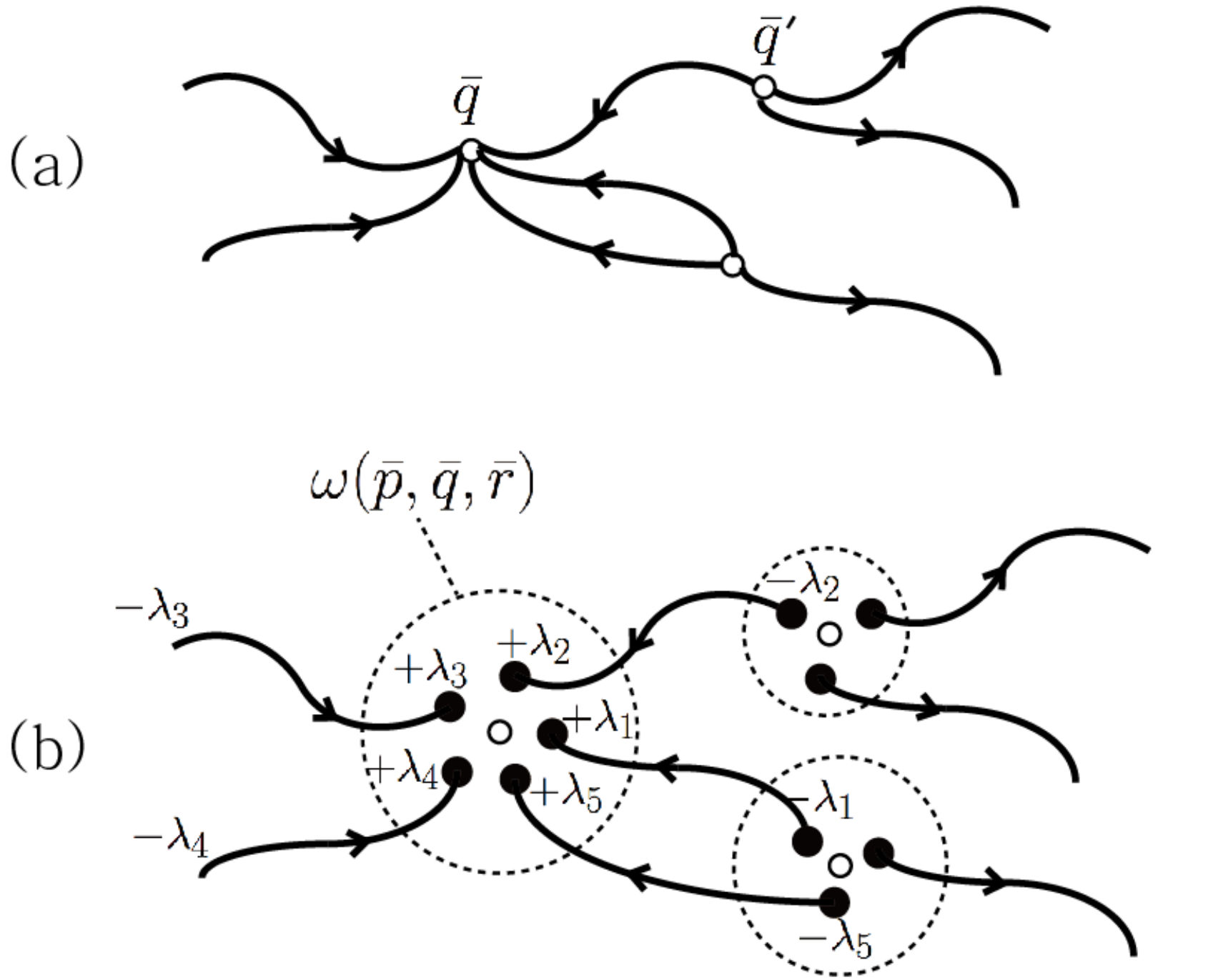}
\caption{Shape of 1-dimensional moduli space near an orientable critical points (a) topologically  and (b) considering orbifold structures}\label{moduli}
\label{1moduli}
\end{center}
\end{figure}

Near an non-orientable critical point, additional cancellation phenomena  as in Lemma \ref{cancel}
occur, and this will be explained in \eqref{eq:can}.

Now, we prove our main theorem \ref{thm:main}

\begin{proof}

Observe that
$$\nu_{\bar{r}} (\bar{\delta}) = \epsilon (\bar{\delta}) \dfrac{ |G_{\bar{r}}|}{|G_{\bar{\delta}}|} = \dfrac{ |G_{\bar{r}}|} {|G_{\bar{q}}|} \cdot \nu_{\bar{q}} (\bar{\delta}).$$
Therefore,
\begin{eqnarray*}
\partial^2 \bar{p} &=& \partial \left(\sum_{\bar{q} \in \CR_{k-1}^+ (\bar{f}) } \sum_{\bar{\gamma} \in \mathcal{M} (\bar{p},\bar{q})  }\nu_{\bar{q}} (\bar{\gamma}) \right) \\
&=& \sum_{\bar{r} \in \CR_{k-2}^+ (\bar{f})} \left( \sum_{\bar{q} }  \sum_{(\bar{\gamma}, \bar{\delta}) \in \partial \OL{\mathcal{M}} (\bar{p},\bar{r}) } \nu_{\bar{q}} (\bar{\gamma}) \nu_{\bar{r}} (\bar{\delta}) \right) \bar{r} \\
&=&  \sum_{\bar{r} \in \CR_{k-2}^+ (\bar{f})} |G_{\bar{r}}|   \left( \sum_{\bar{q} }  \sum_{(\bar{\gamma}, \bar{\delta}) \in \partial \OL{\mathcal{M}} (\bar{p},\bar{r}) } \dfrac{ \nu_{\bar{q}} (\bar{\gamma}) \nu_{\bar{q}} (\bar{\delta}) }{|G_{\bar{q} } |} \right) \bar{r} \\
&=& \sum_{\bar{r} \in \CR_{k-2}^+ (\bar{f})} |G_{\bar{r}}|  \left( \sum_{\bar{q} \in \CR_{k-1}^+ (f)} \omega (\bar{p}, \bar{q}, \bar{r}) \right) \bar{r} =0
\end{eqnarray*}
where the last sum is taken over all broken trajectories $(\bar{\gamma}, \bar{\delta})$ through $\bar{p}$, $\bar{q}$ and $\bar{r}$. The last equality follows from the lemma below, which directly implies the theorem.
\end{proof}

\begin{lemma}
 If $\bar{q}$ is orientable, then
$$  \sum_{ (\bar{\gamma}, \bar{\delta}) } \dfrac{ \nu_{\bar{q}} (\bar{\gamma}) \nu_{\bar{q}} (\bar{\delta}) }{|G_{\bar{q}}|}
=  \omega(\bar{p},\bar{q},\bar{r}),$$
and if $\bar{q}$ is non-orientable, then $\omega(\bar{p},\bar{q},\bar{r}) =0.$
Therefore,
\begin{equation}\label{eq:can}
\sum_{\bar{q} \in \CR_{k-1} (\bar{f})} \omega(\bar{p},\bar{q},\bar{r}) = \sum_{\bar{q} \in \CR_{k-1}^+ (\bar{f})} \omega(\bar{p},\bar{q},\bar{r})  =0.
\end{equation}
\end{lemma}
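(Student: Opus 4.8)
The plan is to split the claim into two cases according to whether $\bar{q}$ is orientable, and to connect both to the sign-cancellation equation \eqref{bdysum2}.

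First I would treat the orientable case. Fix $\bar{p}\in\CR_k^+(\bar f)$, $\bar{r}\in\CR_{k-2}^+(\bar f)$, and an orientable $\bar{q}\in\CR_{k-1}(\bar f)$. By Lemma \ref{CP}, the broken trajectories through $\bar q$ correspond bijectively (locally around $\bar q$) to elements $[\gamma,\delta]\in\Gamma\times\Delta/G_{\bar q}$, and by Lemma \ref{bdy} the isotropy group at such a boundary point is $G_{[\gamma,\delta]}=G_\gamma\cap G_\delta$. I would unwind the definitions: for a single lift $(\gamma,\delta)$ one has $\nu_{\bar q}(\bar\gamma)=\epsilon(\bar\gamma)|G_{\bar q}|/|G_{\bar\gamma}|$ and similarly for $\bar\delta$, so
$$\frac{\nu_{\bar q}(\bar\gamma)\nu_{\bar q}(\bar\delta)}{|G_{\bar q}|}
=\epsilon(\bar\gamma)\epsilon(\bar\delta)\,\frac{|G_{\bar q}|}{|G_{\bar\gamma}|\,|G_{\bar\delta}|}.$$
The key counting point is that the $G_{\bar q}$-orbit of $(\gamma,\delta)$ in $\Gamma\times\Delta$ has size $|G_{\bar q}|/|G_\gamma\cap G_\delta|=|G_{\bar q}|/|G_{[\gamma,\delta]}|$, and each such orbit element is a distinct lift contributing the same sign $\epsilon[\gamma,\delta]=\epsilon(\bar\gamma)\epsilon(\bar\delta)$. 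Hence summing $\nu_{\bar q}(\bar\gamma)\nu_{\bar q}(\bar\delta)/|G_{\bar q}|$ over the pairs $(\bar\gamma,\bar\delta)$ of flow lines in the orbit space downstairs reassembles exactly into a sum over $[\gamma,\delta]\in\Gamma\times\Delta/G_{\bar q}$ of $\epsilon[\gamma,\delta]/|G_{[\gamma,\delta]}|$, which by the definitions of $\omega_\mathcal{P}$ and $\omega$ is precisely $\sum_{(\bar\gamma,\bar\delta)}\omega(\bar\gamma,\bar\delta)=\omega(\bar p,\bar q,\bar r)$. I would present this as a careful bookkeeping of orbit sizes versus stabilizer orders, using that all $\bar q$, $\bar p$, $\bar r$ orientable so that all signs in sight are $G_{\bar q}$-invariant.

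Next, the unorientable case: if $\bar q\in\CR_{k-1}^-(\bar f)$, I want $\omega(\bar p,\bar q,\bar r)=0$. Here I would run the cancellation argument of Lemma \ref{cancel} one level up. Pick $g\in G_{\bar q}$ reversing the orientation of $W^-(q)$; since $\bar p$ and $\bar r$ are orientable, the argument of Lemma \ref{cancel} gives $\epsilon(g\cdot\gamma)=-\epsilon(\gamma)$ and $\epsilon(g\cdot\delta)=-\epsilon(\delta)$, hence $\epsilon[g\cdot\gamma,g\cdot\delta]=\epsilon[\gamma,\delta]$ — so that naive approach does \emph{not} cancel, and the vanishing must instead come from the structure of $\OL{\CM}(\bar p,\bar r)$. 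The right mechanism is \eqref{bdysum2}: summing the weighted boundary over all 1-parameter families $\mathcal P$ gives $0$, and I would isolate the contribution of families breaking at the fixed $\bar q$. The point is that, over $\RR$-coefficients, the previous orientable identity shows $\sum_{\bar q'\in\CR^+_{k-1}}\omega(\bar p,\bar q',\bar r)$ equals (up to the factor $|G_{\bar r}|$) the $\partial^2$-coefficient computed from orientable intermediate points, while \eqref{bdysum2} says the sum over \emph{all} $\bar q'$ vanishes; so I would show directly that the unorientable $\bar q$ contribute zero term-by-term, via the involution $g$ acting on the set of families $\mathcal P$ that break at $\bar q$, pairing $\mathcal P$ with $g\cdot\mathcal P$ and matching weights with opposite signs at the $\bar q$-end. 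Concretely, $g$ sends a family converging to $[\gamma,\delta]$ to one converging to $[g\gamma,g\delta]$; tracing through the sign of the \emph{other} endpoint of $\mathcal P$ (which is what determines the orientation of the interval), the reversal of $W^-(q)$ forces $g\cdot\mathcal P$ to carry the opposite boundary sign at $\bar q$, so $\omega_{\mathcal P}$ and $\omega_{g\cdot\mathcal P}$ cancel in the sum defining $\omega(\bar p,\bar q,\bar r)$.

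The main obstacle I anticipate is the unorientable case, specifically making the pairing $\mathcal P\leftrightarrow g\cdot\mathcal P$ rigorous: one must check that $g\cdot\mathcal P\neq\mathcal P$ (no family is $g$-fixed, which should follow from Lemma \ref{bdy} since a $g$-fixed family would force $g\in G_{[\gamma,\delta]}=G_{\bar x}$, contradicting that $\CP$ flows between orientable points and hence — by Lemma \ref{inteq} — has local groups acting trivially and orientation-preservingly, whereas $g$ reverses $W^-(q)$), and that the weight at the $\bar q$-end genuinely flips sign rather than the weight at the far end. The orientable case, by contrast, is essentially a clean orbit-stabilizer count and should go through routinely once the correspondence of Lemmas \ref{CP} and \ref{bdy} is invoked. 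I would also double-check the bracketed reference ``(lemma \ref{bdy})'' after \eqref{bdysum1} is actually used where claimed, namely that $G_{[\gamma,\delta]}\cong G_{\bar x}$ is what legitimizes dividing by $|G_{[\gamma,\delta]}|$ uniformly along each interval of $\OL{\CM}(\bar p,\bar r)$.
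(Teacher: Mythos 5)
Your treatment of the orientable case is essentially the paper's argument: the paper formalizes your orbit-stabilizer bookkeeping as a weighted Burnside lemma (their Lemma~\ref{count}), which gives
$$\omega(\bar p,\bar q,\bar r)=\frac{1}{|G_{\bar q}|}\sum_{(\gamma,\delta)\in\Gamma\times\Delta}\epsilon(\gamma)\epsilon(\delta),$$
and then concludes by $|\Gamma\times\Delta|=|G_{\bar q}|^2/(|G_{\bar\gamma}||G_{\bar\delta}|)$. That part is fine and matches.

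The unorientable case in your proposal has a genuine gap. You correctly observe that the diagonal action $(\gamma,\delta)\mapsto(g\gamma,g\delta)$ preserves $\epsilon[\gamma,\delta]$ and hence gives no cancellation, but your remedy --- pairing $\mathcal P$ with ``$g\cdot\mathcal P$'' --- does not exist as stated. The families $\mathcal P$ are classified near $\bar q$ precisely by classes $[\gamma,\delta]\in\Gamma\times\Delta/G_{\bar q}$ under the \emph{diagonal} action (Lemma~\ref{CP}), so $g$ acting diagonally sends each class to itself: $[g\gamma,g\delta]=[\gamma,\delta]$, i.e.\ $g\cdot\mathcal P=\mathcal P$ always. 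There is no fixed-point-free involution of the set of families to exploit, and your attempted appeal to Lemma~\ref{bdy} to show $g\cdot\mathcal P\neq\mathcal P$ cannot succeed. The clean route (and the paper's) is to stay at the level of the \emph{unquotiented} set $\Gamma\times\Delta$: the weighted Burnside formula above holds whether or not $\bar q$ is orientable, because $|\{g:g\gamma=\gamma,\ g\delta=\delta\}|=|G_{[\gamma,\delta]}|$ exactly cancels the denominator. One is then free to use the sign-reversing bijection $(\gamma,\delta)\mapsto(g\gamma,\delta)$ acting on \emph{only the first factor} of $\Gamma\times\Delta$ --- this is a bijection of the plain product set (it need not, and does not, descend to the diagonal quotient), and it flips $\epsilon(\gamma)\epsilon(\delta)$, forcing $\sum_{(\gamma,\delta)}\epsilon(\gamma)\epsilon(\delta)=0$. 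So the missing idea is: reduce to a sum over lifts first, and only then apply a one-sided (not diagonal) $g$-action; trying to cancel at the level of families $\mathcal P$ is doomed for the structural reason you yourself flagged as an obstacle.
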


\begin{proof}
From \ref{brk}, $\omega(\bar{p},\bar{q},\bar{r})$ is equivalent to the ``weighted" number of elements of the space $\bigcup \, \Gamma \times \Delta / G_{\bar{q}}$ for all broken trajectories $(\bar{\gamma}, \bar{\delta})$ through $\bar{q}$, where the weight $[\gamma, \delta] \in \Gamma \times \Delta / G_{\bar{q}}$ is given by $\frac{\epsilon [\gamma,\delta] }{ |G_{[ \gamma, \delta ]}|}$. Thus, the weighted number of elements in $\Gamma \times \Delta / G_{\bar{q}}$ is

\begin{equation*}
\dfrac{1}{|G_{\bar{q}}|} \sum_{ (\gamma, \delta ) \in \Gamma \times \Delta } \dfrac{\epsilon [\gamma, \delta]}{|G_{[ {\gamma}, {\delta}]}|} \cdot | \{ g \in G_{\bar{q}} | g \cdot \gamma = \gamma, g \cdot \delta = \delta \}|
\end{equation*} 
 by Lemma \ref{count}. Since $| \{ g \in G_{\bar{q}} | g \cdot \gamma = \gamma, g \cdot \delta = \delta \} | = |G_{\gamma} \cap G_{\delta}| =  |G_{[ {\gamma}, {\delta}]}|$, it equals to
 $$\frac{1}{|G_{\bar{q}}|}  \sum_{ (\gamma, \delta ) \in \Gamma \times \Delta } \epsilon [\gamma, \delta]
 =\frac{1}{|G_{\bar{q}}|}  \sum_{ (\gamma, \delta ) \in \Gamma \times \Delta } \epsilon(\gamma) \cdot \epsilon(\delta) .$$
 If $\bar{q}$ is orientable, $\epsilon(\bar{\gamma}) \cdot \epsilon (\bar{\delta})$ is constant for all $(\gamma, \delta) \in \Gamma \times \Delta$ so that 
 
\begin{eqnarray*}
\dfrac{1}{|G_{\bar{q}}|}  \sum_{ (\gamma, \delta) \in \Gamma \times \Delta  } \epsilon [\gamma, \delta]
&=&
\dfrac{\epsilon (\bar{\gamma})\epsilon (\bar{\delta})}{|G_{\bar{q}}|}  \sum_{ (\gamma, \delta ) \in \Gamma \times \Delta}  1
\\
 &=& \dfrac{\epsilon (\bar{\gamma})\epsilon (\bar{\delta})}{|G_{\bar{q}}|} \cdot |\Gamma \times \Delta| \\
&=& \dfrac{\epsilon (\bar{\gamma})\epsilon (\bar{\delta})}{|G_{\bar{q}}|} \cdot \dfrac{ |G_{\bar{q}}| }{|G_{\bar{\gamma}}|} \cdot \dfrac{ |G_{\bar{q}}| }{|G_{\bar{\delta}}|} \\
&=&  \dfrac{ \nu_{\bar{q}} (\bar{\gamma}) \nu_{\bar{q}} (\bar{\delta}) }{|G_{\bar{q} }|} .
\end{eqnarray*}
Therefore, $\omega(\bar{p},\bar{q},\bar{r}) = \sum_{ (\bar{\gamma}, \bar{\delta}) } \dfrac{ \nu_{\bar{q}} (\bar{\gamma}) \nu_{\bar{q}} (\bar{\delta}) }{|G_{\bar{q}}|}$, if $\bar{q}$ is orientable.

On the other hand, suppose $\bar{q}$ that is non-orientable. Pick any $g \in G_{\bar{q}}$ which reverses the orientation of $W^-(p)$. Then, $g$ gives a permutation $\Gamma \times \Delta$ by
$g \cdot (\gamma, \delta) := (g \cdot \gamma, \delta).$ Note that
$$\epsilon (g \cdot \gamma) \cdot \epsilon( \delta) = -\epsilon (\gamma) \cdot \epsilon( \delta). $$
By the same argument in the case of global quotients (Lemma \ref{cancel}), the number of elements in $\Gamma \times \Delta$ which have positive signs should agree with the number of elements with negative signs. Thus, $ \sum_{ (\gamma, \delta ) \in \Gamma \times \Delta } \epsilon(\gamma) \cdot \epsilon(\delta)=0 $ and $\omega(\bar{p},\bar{q},\bar{r}) =0$ when $\bar{q}$ is not orientable.
\end{proof}

\begin{lemma}\label{count}
Let $S$ be a finite set on which a finite group $G$ acts. Suppose that $S/G$ is a weighted set such that each element $[x] \in S/G$ has the weight $\lambda_{[x]}$. Then,
$$\sum_{[x] \in S/G} \lambda_{[x]} = \dfrac{1}{|G|} \sum_{x \in S} \lambda_{[x]} \cdot |G_x|.$$
\end{lemma}

\begin{proof}
The lemma directly follows from the standard proof of the Burnside's lemma.
\end{proof}

\begin{remark}
The proof of $\underline{\partial}^2=0$ is similar. Indeed, this is automatic since we have a $(\partial, \underline{\partial})$-chain map $\psi : \bar{p} \mapsto |G_{\bar{p}}| \cdot \bar{p}$ which is an $\RR$-linear isomorphism.
\end{remark}

As a final remark, we propose another possible way of constructing a chain complex related to $\bX$. Recall that we have the $G$-equivariant Morse-Bott chain complex for $\tilde{f}:M \to \RR$ by lifting $\bar{f} : \bX \to \RR$, where $[M/G] \cong \bX$ and $G$ is a compact connected Lie group in general. The construction in \cite{AB} provides the Morse-Bott chain complex of $\tilde{f}$ equipped with the $G$-action, if the assumptions of \cite{AB} are met, such as
\begin{itemize}
\item[-] critical submanifolds are orientable;
\item[-] $G$-action preserves orientations of unstable and stable manifolds;
\item[-] evaluation maps from the trajectories are submersions. 
\end{itemize}
Even when all these conditions are met,
the complex in \cite{AB} uses Cartan model of $BG$, and it is not clear what the relation of the differential there and geometric counting of gradient flow-lines in $\bX$ is.
For $\RR$ or $\QQ$-coefficients, the $G$-equivariant cohomology of $M$ in this setting is isomorphic to the singular homology of $M/G$ \cite[Proposition 2.12]{ALR}. Thus it would be interesting to find a
relation between the construction of \cite{AB} and the construction given here in the above setting.

\section{Comparison on the Morse and the singular homology of the quotient space}\label{sec:comparison}
 In this section, we show that the homology of the Morse complex of general orbifolds $(CM_*(\bX, \bar{f}), \partial)$ equals the singular homology of the quotient space.
 We assume in this section that $\bar{f}$ is self-indexing, meaning that $\bar{f} (\bar{p}_i) = \lambda_i$ where $\lambda_i$ is the Morse index of $\bar{p}_i$. 

\begin{remark}
For the general case without self-indexing assumption, one may use the filtration
$$X_k := \displaystyle\bigcup_{{\rm{ind}} (\bar{p}) \leq k} \underline{W}^- (\bar{p}),$$
$$ \phi=X_{-1} \subset X_0 \subset X_1 \subset \cdots \subset X_{n-1} \subset X_n = X$$
instead of the one described below and proceed as in \cite{S}, where $\underline{W}^- (\bar{p})$ is given in \eqref{undW}. Note that $X_k$ is compact. 
\end{remark}

We will apply the topological method of \cite{Ni} which uses the cell structure of $\bX$ induced by Morse data of $f$. This kind of a cell structure was already revealed by several authors, for example \cite{LT} and \cite{H}.

\begin{theorem} \cite[Theorem 7.6]{H} Let $\bar{p} \in \CR_k(\bar{f})$ and $\bar{f} (\bar{p})=c$. Suppose that $\bar{p}$ is the unique critical point in $\bar{f}^{-1} [c- \epsilon , c+ \epsilon]$ for small $\epsilon \ll \frac{1}{2}$. Then, $\bar{f}^{-1} (-\infty, c+\epsilon]$ is homotopic to  $\bar{f}^{-1} (-\infty, c-\epsilon]$ attached with $D^k / G_{\bar{p}}$ along $\partial D^k / G_{\bar{p}}$. Here, $D^k$ is a small invariant disc in the unstable manifold at $\bar{p}$ in a uniformizing chart around $\bar{p}$, and hence endowed with the $G_{\bar{p}}$-action.
\end{theorem}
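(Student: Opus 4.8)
The plan is to reduce the orbifold statement to the classical handle-attachment theorem for the $G_{\bar p}$-invariant Morse function on the uniformizing chart, and then to descend the construction to the quotient. First I would fix the uniformizing chart $(\WT{U}_{\bar p}, G_{\bar p}, \pi_{\bar p})$ around $\bar p$ in which $\WT{U}_{\bar p}$ carries a linear $G_{\bar p}$-action and $q \in \pi_{\bar p}^{-1}(\bar p)$ is the origin, with $f = \bar f \circ \pi_{\bar p}$ Morse at $q$ of index $k$. Using the assumption that $\bar p$ is the only critical point of $\bar f$ in $\bar f^{-1}[c-\epsilon, c+\epsilon]$ (equivalently, $q$ is the only critical point of $f$ in that strip upstairs, after possibly shrinking), I would invoke the equivariant/parametrized version of the standard Morse lemma to get a $G_{\bar p}$-invariant Morse coordinate system near $q$, and hence an invariant ``descending disc'' $D^k \subset W^-(q) \cap \WT{U}_{\bar p}$ together with the classical deformation retraction of $f^{-1}(-\infty, c+\epsilon]$ onto $f^{-1}(-\infty, c-\epsilon] \cup D^k$. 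The key point is that the negative gradient flow of the invariant metric is $G_{\bar p}$-equivariant, so the retraction can be chosen $G_{\bar p}$-equivariantly; this is where I would lean on Lemma \ref{flow} (isotropy is constant along flow lines) to guarantee that the flow used in the retraction respects the orbit structure.

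Next I would pass to quotients. The inclusion $\bar f^{-1}(-\infty, c-\epsilon] \hookrightarrow \bar f^{-1}(-\infty, c+\epsilon]$ on the orbit space $X$ is covered, chart by chart, by the equivariant inclusions upstairs; away from $\bar p$ there is no critical value in the strip, so the negative gradient flow on $X$ (well-defined on the orbit space by equivariance of the local flows) gives a deformation retraction of $\bar f^{-1}(-\infty, c+\epsilon]$ onto $\bar f^{-1}(-\infty, c-\epsilon]$ with the image of $D^k$ glued in. Quotienting the equivariant handle attachment by $G_{\bar p}$ yields exactly the attachment of $D^k/G_{\bar p}$ along $\partial D^k / G_{\bar p}$, since $\pi_{\bar p}(D^k) = D^k/G_{\bar p}$ and $\pi_{\bar p}(\partial D^k) = \partial D^k/G_{\bar p}$ as sets with the quotient topology. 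I would also note that whether $\bar p$ is orientable or not plays no role here: the homotopy statement is purely topological, and the orientability distinction only enters later (Corollary \ref{disk}) when one asks whether this attaching cell carries homology.

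The main obstacle I expect is the gluing/compatibility issue at the boundary: one must check that the locally-defined equivariant retractions on the uniformizing chart around $\bar p$ and the global gradient-flow retraction on the complement of a neighborhood of $\bar p$ agree on the overlap and assemble to a globally continuous homotopy on $X$. This is the orbifold analogue of the routine ``glue the local handle picture to the global flow'' step in the manifold case, but here one has to be careful that the orbifold charts overlap compatibly and that the flow on $X$ is genuinely continuous across chart boundaries — which follows from effectiveness of $\bX$ and equivariance of the transition maps, but deserves explicit mention. Since this is quoted as Theorem 7.6 of \cite{H}, I would in practice cite that reference for the technical gluing and only sketch the chart-local picture above; the contribution here is really the clean identification of the attached cell as $D^k/G_{\bar p}$.
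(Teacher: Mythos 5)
Your sketch is correct and is precisely the equivariant handle-attachment argument underlying Hepworth's Theorem 7.6; the paper itself gives no argument here and simply cites \cite{H} (together with a comparison to Theorem 3.2 of \cite{M}), which is exactly what you say you would do in practice. So this is essentially the same approach as the paper, with a helpful expansion of the chart-local picture that the paper leaves implicit.
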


\begin{proof}
See \cite[Theorem 7.6]{H} and compare it with \cite[3.2]{M}.
\end{proof}

We need an elementary fact of equivariant topology to compute homological information of attaching cells.

\begin{theorem} \cite[Theorem 2.4]{BR} \label{br}
Let $K$ be a regular $G$-simplicial complex with $G$ finite and $L$ be a subcomplex. Then,
$$H_\ast (K,L ; \RR)^G \cong H_\ast (K/G, L/G ; \RR),$$
where the left hand side means the subset of $H_\ast (K,L ; \RR)$ fixed by $G$.
\end{theorem}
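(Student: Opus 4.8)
The plan is to prove the isomorphism first at the level of simplicial chain complexes and then pass to homology, exploiting that the functor of taking $G$-invariants is exact over $\RR$. First I would unpack the word ``regular'': the action of $G$ on $K$ is assumed such that whenever $g\in G$ fixes a simplex $\sigma$ setwise it fixes $\sigma$ pointwise (this can always be arranged after one barycentric subdivision). Under this hypothesis the orbit space $K/G$ inherits a natural simplicial structure whose $k$-simplices are exactly the $G$-orbits of $k$-simplices of $K$, the projection $p:K\to K/G$ is simplicial, and one can choose orientations on orbit representatives coherently — precisely because no stabilizer reverses the orientation of a simplex.

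Next I would identify the chain complexes. For each $k$, the simplicial chain group $C_k(K/G;\RR)$ has a basis indexed by the $G$-orbits of $k$-simplices of $K$, and the assignment sending an orbit $[\sigma]$ to the orbit sum $\sum_{\tau\in G\cdot\sigma}\tau$ (any nonzero normalization is fine since we work over a field) defines an isomorphism $C_k(K/G;\RR)\xrightarrow{\ \cong\ }C_k(K;\RR)^G$; it commutes with the boundary operators because $\partial$ is $G$-equivariant and carries orbit sums to orbit sums. Running the same argument for $L$, and using that $(C_\ast(K;\RR)/C_\ast(L;\RR))^G=C_\ast(K;\RR)^G/C_\ast(L;\RR)^G$ — again a consequence of exactness of invariants over $\RR$ — I obtain a chain isomorphism $C_\ast(K/G,L/G;\RR)\cong C_\ast(K,L;\RR)^G$.

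Then I would pass to homology. Since $G$ is finite and $\RR$ has characteristic zero, the averaging idempotent $e:=\frac{1}{|G|}\sum_{g\in G}g$ is a chain map (an $\RR$-linear combination of the chain automorphisms induced by elements of $G$) and is a projection of $C_\ast(K,L;\RR)$ onto its invariant subcomplex; hence $M\mapsto M^G$ is an exact functor on $\RR[G]$-modules and commutes with taking homology, giving $H_\ast\bigl(C_\ast(K,L;\RR)^G\bigr)\cong H_\ast(C_\ast(K,L;\RR))^G=H_\ast(K,L;\RR)^G$. Composing this with the chain isomorphism of the previous step yields $H_\ast(K/G,L/G;\RR)\cong H_\ast(K,L;\RR)^G$, as claimed. (This is essentially the transfer argument.)

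The one step where I would be most careful is the chain-level identification $C_\ast(K/G;\RR)\cong C_\ast(K;\RR)^G$: it uses regularity in an essential way, since if some stabilizer $G_\sigma$ reversed the orientation of $\sigma$ then the orbit sum would be zero, the $G$-invariants would be smaller than $C_\ast(K/G;\RR)$, and the isomorphism would fail. Once regularity is in force, the rest is formal representation theory of a finite group over a field of characteristic zero.
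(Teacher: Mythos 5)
The paper does not prove this result; it quotes it from Bredon (Theorem III.2.4 there), so you are supplying the standard proof, and your overall strategy --- a chain-level identification $C_\ast(K/G;\RR)\cong C_\ast(K;\RR)^G$ followed by exactness of $G$-invariants over $\RR$ via the averaging idempotent --- is indeed the right one.

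There is, however, a genuine gap in the first step: your unpacking of ``regular'' is incorrect. You take regularity to mean ``whenever $g$ fixes a simplex setwise it fixes it pointwise,'' achievable after one barycentric subdivision. That condition is strictly weaker than Bredon's regularity, and it does \emph{not} suffice to give $K/G$ the simplicial structure you then rely on. A concrete counterexample: let $K$ be the boundary of a triangle on vertices $a,b,c$ and let $G=\ZZ/3\ZZ$ act by cyclic rotation. Every simplex has trivial stabilizer, so your stated condition holds vacuously, yet $K/G$ consists of one vertex and one edge forming a loop, which is not a simplicial complex, so the claimed isomorphism $C_\ast(K/G;\RR)\cong C_\ast(K;\RR)^G$ has no meaning in this setting. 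Bredon's actual regularity is a compatibility condition across different simplices (if $\{v_0,\dots,v_n\}$ and $\{g_0v_0,\dots,g_nv_n\}$ both span simplices of $K$, then some single $g\in G$ satisfies $gv_i=g_iv_i$ for all $i$), and it is obtained only after \emph{two} barycentric subdivisions, not one. Once you substitute the correct definition, the quotient does become a simplicial complex whose simplices are the $G$-orbits of simplices of $K$, the chain-level isomorphism you describe is valid, and the rest of your argument (the averaging idempotent $\tfrac{1}{|G|}\sum_{g\in G} g$ showing $(-)^G$ is exact and commutes with homology over $\RR$) goes through without change.
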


\begin{corollary}\label{disk}
Let $D^n$ be the $n$-dimensional disc and the finite group $\GG$ act on $(D^n, \partial D^n)$. Then, the homology group $H_\ast (D^n / \GG, \partial D^n / \GG ;\RR)$ are given as follows.
\begin{equation*}
H_i (D^n / \GG,\partial D^n/ \GG ;\RR)=
\left\{
\begin{array}{ll}
\RR & \mbox{ if}\,\,\, i=n \mbox{ and} \,\,\,\GG \mbox{ preserves
 the orientation of} \,\,\,D^n\\
0 & \, \mbox{otherwise} 
\end{array}.\right.
\end{equation*}
\end{corollary}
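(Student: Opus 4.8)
The plan is to deduce the corollary from Theorem \ref{br} by computing the equivariant homology $H_\ast(D^n, \partial D^n; \RR)^\GG$ and transporting it to the quotient. Theorem \ref{br} requires the $\GG$-action to be simplicial on a regular $\GG$-complex, so I would first replace $(D^n,\partial D^n)$ by a $\GG$-invariant regular simplicial pair $(K,L)$ of the same $\GG$-homotopy type. In our application $D^n$ is a small invariant disc carrying a linear (orthogonal) $G_{\bar p}$-action, so this is harmless: one may cite an equivariant triangulation theorem, or concretely take a $\GG$-invariant triangulation of the boundary sphere, subdivide barycentrically, and cone it off at the (fixed) center. Under this identification $D^n/\GG \cong K/\GG$ and $\partial D^n/\GG \cong L/\GG$.

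Next I would recall the absolute computation: since $(D^n,\partial D^n)$ is a good pair and $D^n$ is contractible, the long exact sequence of the pair gives $H_i(D^n,\partial D^n;\RR)\cong \widetilde H_i(D^n/\partial D^n;\RR)\cong \widetilde H_i(S^n;\RR)$, which is $\RR$ for $i=n$ and $0$ otherwise. The only point needing attention is the $\GG$-action on the single nontrivial group $H_n(D^n,\partial D^n;\RR)\cong\RR$. Any self-homeomorphism $g$ of the pair acts on this group by multiplication by its degree $\deg(g)=\pm1$, and $\deg(g)=+1$ precisely when $g$ preserves the orientation of $D^n$. Hence the action factors through the homomorphism $\rho:\GG\to\{\pm1\}$, $g\mapsto\deg(g)$.

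Finally I would take invariants. Working over $\RR$, where $2$ is invertible, the invariant subspace of the sign representation is $\RR$ if $\rho$ is trivial — equivalently, if every element of $\GG$, hence $\GG$ itself, preserves the orientation of $D^n$ — and is $0$ if $\rho$ is nontrivial, since then some $g$ acts by $-1$ and $v=-v$ forces $v=0$. In degrees $i\neq n$ the group $H_i(D^n,\partial D^n;\RR)$ already vanishes, so its invariants vanish as well. Combining with Theorem \ref{br} applied to $(K,L)$ gives
$$
H_i(D^n/\GG,\partial D^n/\GG;\RR)\;\cong\;H_i(D^n,\partial D^n;\RR)^\GG,
$$
which is $\RR$ when $i=n$ and $\GG$ preserves the orientation of $D^n$, and $0$ otherwise, as claimed.

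The main obstacle is not conceptual but a matter of hypotheses: ensuring the $\GG$-action is genuinely simplicial on a regular $\GG$-complex so that Theorem \ref{br} literally applies, i.e.\ carrying out the equivariant triangulation and the identification of the quotients. Once that is in place, the remainder is the standard degree/orientation bookkeeping together with the elementary computation of invariants of the sign representation over $\RR$.
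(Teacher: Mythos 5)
Your proposal is correct and takes essentially the same route as the paper: cite an equivariant triangulation theorem (the paper uses Illman \cite{IL}), achieve regularity by subdivision, apply Theorem \ref{br}, and then compute the $\GG$-invariants of $H_\ast(D^n,\partial D^n;\RR)$ using the degree/orientation argument. The paper's proof is merely terser, leaving the invariants computation implicit, whereas you spell it out.
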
 

\begin{proof}
It suffices to note that there exists a $\GG$-invariant triangulation of $(D^n, \partial D^n)$ by \cite{IL} and that we can achieve the regularity condition in Theorem \ref{br} by suitable subdivisions.
\end{proof}

Recall that in smooth case,  the coefficient of $q$ in $\partial p$ is defined by the (relative) intersection number between the unstable of $p$ and the stable manifold of $q$ (see \cite{Ni}). In what follows, we will use the integration of Thom forms instead as they are more suitable in the orbifold setting.
Recall from \cite{CR} (or \cite{ALR}) that the Thom form of a suborbifold $\bold{N}$ of $\bX$ is defined locally as the invariant Thom form of the preimage $\WT{N}$ of $\bold{N}$ in each uniformizing chart. Let $N$ denote the underlying quotient space of $\bold{N}$.

\begin{remark}\label{Thom}
On an uniformizing chart, integration of Thom form along a normal fiber of $\WT{N}$ at $p \in \WT{N}$ is $1$ where $ \pi (p) = \bar{p}$, according to the usual definition of Thom forms on Euclidean spaces.
 %Hence, the orbifold integration of the Thom form along a normal fiber of $\bold{N}$ in $\bX$ at $\bar{p}$ is $1 / |G_{\bar{p}}|$. 
See \cite{ALR} for more details about Thom forms and Poincare duals of suborbifolds.
\end{remark}

We will also need the Stokes theorem for orbifolds, which goes back to \cite{Sa}. We  recall it here for readers convenience. A $C^\infty$ singular simplex $\bar{s}$ of dimension $k$ in $\bX$ is defined by a smooth map $\bar{s}$ from a $k$-dimensional simplex $\Delta_k $ to $X$. Suppose that the image of $\bar{s}$ lies in a single uniformizing chart $(\WT{U}, G, \pi)$ and it admits a lifting $s : \Delta_k \to \WT{U}$ with $ \pi \circ s = \bar{s}$.
Consider  a $k$-form $\bar{\omega}$ on $\pi(\WT{U})$, which is given by an invariant 
$k$-form $\omega$ on $\WT{U}$. We define
\begin{equation}\label{intoverchain}
\int_{\bar{s}} \bar{\omega} = \int_{\Delta_k} s^\ast \omega.
\end{equation}
For a general $\bar{s}$, we use a partition of unity to define $\int_{\bar{s}} \bar{\omega} $. One can check that the Stokes formula still holds:
$$\int_{\bar{s}} d \bar{\omega} = \int_{\partial \bar{s}}  \bar{\omega}.$$

\begin{remark}
Here, we think of $s$ as a singular chain defined on the usual domain in the Euclidean space and hence, there is no weight in the integral \eqref{intoverchain}. 

Since $s$ is given by the composition $\pi \circ s$ and $\pi$ is a quotient map, $s$ could wrap the image several times. In the proof of the next proposition, we will divide such a singular chain by the multiplicity of $\pi$, which explains weights in front of the integrals. This is basically the idea behind the definition of the orbifold integration in \cite{ALR}. Integrals that appear below are not orbifold integrals.
\end{remark}

\begin{prop}\label{comp}
The homology of $(CM_*(\bX, \bar{f}), \partial)$ constructed in {\rm Section 4} equals the singular homology of the underlying space $X$ of $\bX$. 
\end{prop}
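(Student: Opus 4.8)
The plan is to follow the topological strategy of Nicolaescu (\cite{Ni}): under the self-indexing assumption, the orbit space $X$ acquires a CW-like filtration whose $k$-th stage is built from the $(k-1)$-st stage by attaching the cells $D^k/G_{\bar p}$ over all $\bar p \in \CR_k(\bar f)$, and we must show that the orbifold Morse complex $(CM_*(\bX,\bar f),\partial)$ computes the homology of this filtration. First I would set $X_k := \bar f^{-1}(-\infty, k+\tfrac12]$ (self-indexing makes these the sublevel sets separating consecutive critical values), so that $X_{-1}=\emptyset$ and $X_n=X$, and invoke Theorem 7.6 of \cite{H} together with Corollary \ref{disk} to identify
\begin{equation*}
H_j(X_k, X_{k-1};\RR) \cong \bigoplus_{\bar p \in \CR_k(\bar f)} H_j(D^k/G_{\bar p}, \partial D^k/G_{\bar p};\RR) = \begin{cases} \bigoplus_{\bar p \in \CR_k^+(\bar f)} \RR & j = k\\ 0 & j \neq k.\end{cases}
\end{equation*}
This is exactly the point where the non-orientable critical points drop out: by Corollary \ref{disk} the relative homology of an unorientable cell vanishes, so the $E^1$-page of the filtration spectral sequence (equivalently, the cellular chain complex of the filtration) is the free $\RR$-module on $\CR_k^+(\bar f)$ in degree $k$ — which is precisely $CM_k(\bX,\bar f)$ as a graded vector space.

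The substantive step is to identify the connecting homomorphism $\delta_k : H_k(X_k,X_{k-1}) \to H_{k-1}(X_{k-1},X_{k-2})$ of the filtration with the orbifold boundary operator $\partial$. For this I would represent the generator attached to $\bar p$ by the (orbifold) unstable disk, push its boundary sphere $\partial D^k/G_{\bar p}$ down the gradient flow into $X_{k-1}$, and compute its class in $H_{k-1}(X_{k-1},X_{k-2})$ by pairing against the generators dual to each $\bar q \in \CR_{k-1}^+(\bar f)$. Using the Thom-form description of the pairing from Remark \ref{Thom}: the coefficient of $\bar q$ is the orbifold integral of the Thom form of the stable manifold $\underline{W}^+(\bar q)$ over the relevant cycle, and Remark \ref{Thom} says this integral, computed on the minimal chart $(\WT U_{\bar q}, G_{\bar q}, \pi_{\bar q})$, picks up a factor $1/|G_{\bar q}|$ per intersection point upstairs — while the number of lifted intersection points contributed by a single flow line $\bar\gamma:\bar p\to\bar q$ is $|G_{\bar q}|/|G_{\bar\gamma}|$ with sign $\epsilon(\bar\gamma)$. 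Summing gives $\sum_{\bar\gamma:\bar p\to\bar q}\epsilon(\bar\gamma)\,|G_{\bar q}|/|G_{\bar\gamma}| = n(\bar p,\bar q)$, which is exactly the coefficient in \eqref{fin def}. Here I would lean on Stokes' theorem for orbifolds (stated just before the proposition) to see that this count is independent of the chosen representing cycle and of the chosen section of the flow, and on transversality/compactness of the one-dimensional moduli spaces $\OL{\CM}(\bar p,\bar q)$ so that the integral reduces to a finite signed sum over flow lines.

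Once $\partial = \delta_k$ is established, the conclusion is formal: the filtration $\{X_k\}$ has $H_j(X_k,X_{k-1})$ concentrated in degree $j=k$, so the associated spectral sequence degenerates at $E^2$ and $H_k\big(\mathrm{gr}_\bullet, \delta\big) \cong H_k(X)$; since $(\mathrm{gr}_\bullet,\delta) = (CM_*(\bX,\bar f),\partial)$, this is the claim. (For the case without self-indexing one replaces $X_k$ by $\bigcup_{\mathrm{ind}(\bar p)\le k}\underline W^-(\bar p)$ as in the preceding remark and runs the same argument, noting these are compact.) I expect the main obstacle to be the rigorous matching of the geometric boundary map with $\partial$ including \emph{signs} — one must check that the boundary orientation of the attached cell, transported along the gradient flow and compared in the chart at $\bar q$ against the fixed orientation of $W^-(q)$, produces exactly $\epsilon(\bar\gamma)$ as defined by the sign rule of Section 2, and that the $|G_{\bar q}|/|G_{\bar\gamma}|$ lifts in the chart at $\bar q$ all carry the same sign (which follows, as in Lemma \ref{cancel}, from the $G_{\bar q}$-invariance of the chosen orientation on $W^-(q)$ when $\bar q$ is orientable). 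The weighting bookkeeping — the $1/|G_{\bar q}|$ from orbifold integration versus the multiplicity $|G_{\bar q}|/|G_{\bar\gamma}|$ of lifts — is the conceptual heart, but it is precisely the content already packaged in Remark \ref{Thom} and the intrinsic formula \eqref{fin def}, so the remaining work is careful orientation tracking rather than new ideas.
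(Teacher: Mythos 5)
Your overall strategy — filter $X$ by sublevel sets, use Theorem 7.6 of \cite{H} and Corollary \ref{disk} to identify the $E^1$-page with $CM_*(\bX,\bar f)$, then match the connecting map with $\partial$ via Thom forms and orbifold Stokes — is the same as the paper's, so the architecture is right. But the coefficient computation has two related gaps that would cause the argument to fail as written.

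First, the normalization of the generators is missing. The generator of $H_k(X_k,X_{k-1})$ attached to $\bar p$ cannot simply be the class $[(\underline D^-(\bar p),\partial\underline D^-(\bar p))]$: the group $G_{\bar p}$ may act ineffectively on the unstable disk $D^-(p)$ in the uniformizing chart (i.e.\ the subgroup $st(\underline W^-(\bar p))$ fixing $D^-(p)$ pointwise can be nontrivial, even when $G_{\bar p}$ acts effectively on the whole chart). The paper defines $\left|\bar p\right\rangle := \frac{1}{|st(\underline W^-(\bar p))|}[(\underline D^-(\bar p),\partial\underline D^-(\bar p))]$ precisely to cancel the discrepancy between $|G_{\bar p}|$ and the actual degree of the covering $\partial D^-(p)\to\partial\underline D^-(\bar p)$, which is $|G_{\bar p}|/|st(\underline W^-(\bar p))|$. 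Without this normalization, the coefficient you would compute is $\frac{|st(\underline W^-(\bar p))|}{|st(\underline W^-(\bar q))|}\,n(\bar p,\bar q)$ rather than $n(\bar p,\bar q)$, and the two $st$-orders need not agree for adjacent-index critical points.

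Second, your weight bookkeeping conflates two different integrals. Integrating the Thom form $\eta_{\bar q}$ over (the normalized) $\partial\left|\bar p\right\rangle$ gives $\sum_{\bar\gamma}\epsilon(\bar\gamma)/|G_{\bar\gamma}|$, not $\sum_{\bar\gamma}\epsilon(\bar\gamma)|G_{\bar q}|/|G_{\bar\gamma}|$; the factor $1/|G_{\bar q}|$ per upstairs intersection and the $|G_{\bar q}|/|G_{\bar\gamma}|$ lifts multiply to $1/|G_{\bar\gamma}|$. The missing $|G_{\bar q}|$ comes from the \emph{other} side of the Stokes identity: the paper shows $\int \eta_{\bar q}$ over $\left|\bar q\right\rangle$ itself equals $1/|G_{\bar q}|$ (not $1$), so the Thom form is not literally a dual basis vector and one must divide by this self-pairing. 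Concretely, Stokes gives $\sum_{\bar\gamma}\epsilon(\bar\gamma)/|G_{\bar\gamma}| = a_{\bar q}/|G_{\bar q}|$, hence $a_{\bar q}=n(\bar p,\bar q)$. Your proposal treats the pairing against ``dual generators'' as if the Thom form already is that dual, which skips exactly the comparison-of-two-integrals step (the equation \eqref{IJ} and the $J$-side computation) where the $|G_{\bar q}|$ and the second $st$-normalization enter. Once both of these are put in place, the rest of your outline goes through as in the paper.
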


\begin{proof}
We begin with a filtration of singular homology of $X$. Let $X_k := \bar{f}^{-1} (-\infty, k+1- \epsilon)$, $0<\epsilon \ll 1$, and $Y_k := \bar{f}^{-1} [k-\epsilon, k+1-\epsilon]$. Then,
\begin{equation}\label{filt}
C_\ast ( X_0 ; \RR ) \subset C_\ast ( X_1 ;\RR ) \subset \cdots \subset C_\ast ( X_n ; \RR )= C_\ast ( X ; \RR)
\end{equation}
gives a filtration on the singular chain complex $C_\ast ( X;\RR)$. For a critical point $\bar{p}$ with $\bar{f}(\bar{p})=k$, let $\underline{W}^\pm (\bar{p})$ be the stable and unstable manifolds at $\bar{p}$, respectively. i.e.
\begin{equation}\label{undW}
\underline{W}^\pm (\bar{p}) = \{ \bar{x} \in X : \lim_{t \to \pm \infty} \bar{\Phi}_t (\bar{x}) = \bar{p} \}.
\end{equation}
Set $\underline{D}^\pm (\bar{p}) = \underline{W}^\pm (\bar{p}) \cap Y_k$. Then, topologically (since $\epsilon$ is small enough)
$$\underline{D}^\pm (\bar{p}) \cong D^\pm (p) / G_{\bar{p}},$$
where $D^\pm (p)$ are small invariant neighborhoods of $p \in \pi^{-1} (\bar{p})$ in stable and unstable manifolds of $p$ with respect to the lift $f$ of $\bar{f}$. By $\partial \underline{D}^\pm (\bar{p})$, we mean the image of $\{ \partial D^\pm (p) \} / G_{\bar{p}}$, or equivalently 
$$\partial \underline{D}^+ (\bar{p}) = \underline{D}^+ (\bar{p}) \cap \{ \bar{f} = k+1-\epsilon \},$$
$$\partial \underline{D}^- (\bar{p}) = \underline{D}^- (\bar{p}) \cap \{ \bar{f} = k-\epsilon \}.$$
By the excision, we have (see  \cite{H})
\begin{equation*}
H_\ast (X_k, X_{k-1} ;\RR)=
\left\{
\begin{array}{ll}
\displaystyle\bigoplus_{\bar{p} \in \CR_k (\bar{f})} H_k (\underline{D}^- (\bar{p}) , \partial \underline{D}^- (\bar{p})   ; \RR) \,\, & \ast=k \\
\qquad \qquad \qquad \,\,\,\,\, 0 & \, \mbox{otherwise} 
\end{array}.\right.
\end{equation*}
From \ref{disk}, $H_k (\underline{D}^- (\bar{p}) ,  \partial \underline{D}^- (\bar{p}) ; \RR) \cong H_k (D^- (p), \partial D^- (p) )^{G_{\bar{p}} }$ is isomorphic to $\RR$  if $\bar{p}$ is orientable and vanishes otherwise.

The $E^1$-terms of the spectral sequence coming from \eqref{filt} produce the following chain complex:
$$ \cdots \to H_{k+1} (X_{k+1}, X_{k} ;\RR)  \to  H_k (X_k, X_{k-1} ;\RR) \to H_{k-1} (X_{k-1}, X_{k-2} ;\RR) \to \cdots ,$$
where the boundary map is given by the composition
$$ H_{k} (X_{k}, X_{k-1} ;\RR)  \to  H_{k-1} (X_{k-1} ;\RR) \to H_{k-1} (X_{k-1}, X_{k-2} ;\RR).$$

\begin{figure}[h]
\begin{center}
\includegraphics[height=3.3in]{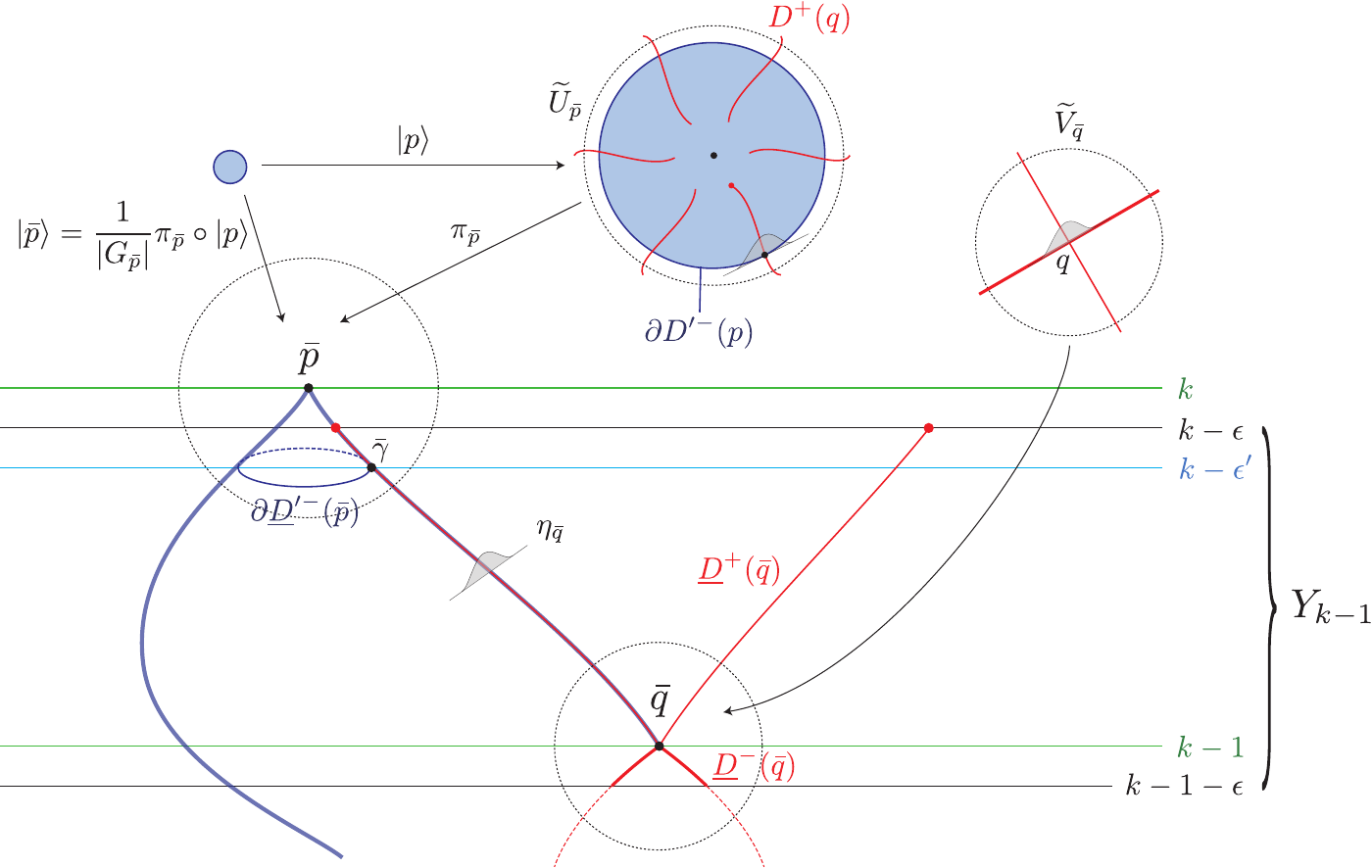}
\caption{CW structure of the quotient space}
\label{cellstr}
\end{center}
\end{figure}

We now choose a generator of $H_{k} (X_{k}, X_{k-1} ;\RR) $ which is roughly a smooth singular chain in $X_k$ representing $$(\underline{D}^- (\bar{p}) ,  \partial \underline{D}^- (\bar{p}) ) \subset (X_{k}, X_{k-1} ) $$ for $\bar{p} \in \CR_k^+ (\bar{f})$. For sufficiently small $\epsilon$, we may assume that the set $\underline{D}^- (\bar{p})$ lies entirely in the uniformizing neighborhood $\pi_{\bar{p}} (\WT{U}_{\bar{p}})$ (for $(\WT{U}_{\bar{p}}, G_{\bar{p}}, \pi_{\bar{p}})$) around $\bar{p}$. Then, we have a neighborhood $D^- (p)$ of $p=\pi_{\bar{p}}^{-1} (p)$ in the unstable manifold of the lifting of $f$ which covers $\underline{D}^- (\bar{p})$ downstairs. $D^- (p)$ obviously represents a smooth singular chain in $\WT{U}_{\bar{p}}$, which is a map $\left| p \right>$ from (formal sum of) simplices in the Euclidean space to $\WT{U}_{\bar{p}}$. Note that the composition $\pi_{\bar{p}} \circ \left| p \right>$ wraps the original set $\underline{D}^- (\bar{p})$ $|G_{\bar{p}}|$-times (if $G_{\bar{p}}$-action on $D^- (p)$ is effective). Thus, it is natural to define
\begin{equation}\label{genE1}
\left| \bar{p} \right> = \frac{1}{|G_{\bar{p}}|} \pi_{\bar{p}} \circ \left| p \right>
\end{equation}
and we will use $\left| \bar{p} \right>$ as fixed generator of $H_{k} (X_{k}, X_{k-1} ;\RR) $. (See Figure \ref{cellstr}.)  We do the similar for each $\bar{q} \in \CR^+ (\bar{f})$.

Then, there exists a certain real number $a_{\bar{q}}$ for each $\bar{q} \in  \CR_{k-1}^+(\bar{f})$ such that
\begin{equation}\label{ket}
\partial \left| \bar{p} \right> = \sum_{ \bar{q} \in \CR_{k-1}^+(\bar{f})} a_{\bar{q}} \, \left| \bar{q} \right>.
\end{equation}
Now, it is enough to show that 
$$a_{\bar{q}} = n (\bar{p}, \bar{q}) = \sum_{\bar{\gamma} \in \mathcal{M} (\bar{p},\bar{q}) } \epsilon(\bar{\gamma}) \dfrac{|G_{\bar{q}}|}{|G_{\bar{\gamma}}|} .$$

For this, we will consider smooth singular chains $\partial \left| \bar{p} \right>$ and $\left| \bar{q} \right>$ in the subspace
$Y_{k-1}$ of $X$, and use the Thom form of $\underline{D}^+(\bar{q})$ to identify the constant $a_{\bar{q}} = n (\bar{p},\bar{q})$.
%More precisely, consider the following two sets,
%$$ \mathring{Y}_{k-1}  := Y_{k-1} \setminus \{ \bar{f}=k-1-\epsilon \}$$
%and
%$$  \mathring{\underline{D}}^+(\bar{q}):=\underline{D}^+ (\bar{q}) \setminus \partial \underline{D}^+ (\bar{q}) \subset \mathring{Y}_{k-1}.$$ Since $\bar{f}$ is invariant, $\mathring{Y}_{k-1}$ carries a natural orbifold structure inherited from $\bX$ and $\mathring{\underline{D}}^+(\bar{q})$ can be considered as  a suborbifold of $\mathring{Y}_{k-1}$. 
Denote by $\eta_{\bar{q}}$ the Thom form of $\underline{D}^+(\bar{q})$ on $Y_{k-1}$.

Set $\underline{D}'^- (\bar{p}) := W^-(\bar{p}) \cap \bar{f}^{-1} [k-\epsilon', \infty)$ and $\partial \underline{D}'^- (\bar{p}) := \underline{D}^- (\bar{p}) \cap \bar{f}^{-1} (k-\epsilon')$ for $ \epsilon < \epsilon'$ and write $\left| p \right>'$  for the singular chain in $\WT{U}_{\bar{p}}$ representing $D'^- (p)$ which lies over $\underline{D}'^- (\bar{p})$. (we also assume $\epsilon'$ to be so small that there exists an uniformizing chart $(\WT{U}_{\bar{p}}, G_{\bar{p}}, \pi_{\bar{p}})$ around $\bar{p}$ with $\partial \underline{D}'^- ( {\bar{p}} ) \subset \pi_{\bar{p}} (\WT{U}_{\bar{p}})$.)

Define
$$\partial \left| \bar{p} \right>' := \frac{1}{|G_{\bar{p}}|} \pi_{\bar{p}} \circ \partial \left| p \right>'$$
and identify $\partial \left| \bar{p} \right>$ with $\partial \left| \bar{p} \right>'$
by flowing down $\partial \left| \bar{p} \right>$ along negative gradient flows from $\bar{f}^{-1}(\epsilon)$ to
 $\bar{f}^{-1}(\epsilon')$. (We made such a deformation not to place possible intersections of
 $\partial \underline{D}'^- (\bar{p})$ and $\underline{D}^+(\bar{q})$ on the boundary of $Y_k$.)

As \eqref{ket} holds on the homology level, we can take a formal sum of simplicial complexes $K$ which maps (say, via $\tau$) to  $Y_{k-1}$, whose boundary $\partial \tau:\partial K \to Y_{k-1}$ is given by 
$$\partial \left| \bar{p} \right>' \cup \bigcup_{\bar{q}} a_{\bar{q}} \left| \bar{q} \right>,$$
with the opposite orientation on the first component (relative to $X_{k-2}$).

 Here, we consider  $\tau:K \to Y_{k-1}$, 
 $\partial \left| \bar{p} \right>'$ and $\left| \bar{q} \right>$ as (smooth) singular chains on $ Y_{k-1}$.  The rational coefficients are allowed as we work with $\RR$-coefficients for the 
 singular homology. By subdividing simplices repeatedly if necessary, we may assume that the map $\tau$ when restricted to each simplex in $K$ has a lift in some uniformizing chart of $\bX$.
%  We remark that by a integration of $\eta_{\bar{q}}$ over $\partial \underline{D}'^- (\bar{p}) \subset \mathring{Y}_{k-1}$ we don't mean an orbifold integration of $\eta_{\bar{q}}$ over the compact suborbifold $\partial  \underline{D}'^- (\bar{p}) \subset \mathring{Y}_{k-1}$. We rather regard This is because the Stokes theorem holds only in this sense. (See \cite{Sa}.)   The coefficients $1 / b_{\bar{p}}$ and $ a_{\bar{q}} / b_{\bar{q}} $ above do not make sense logically. Nevertheless, we proceed as if they are integers since although one multiplies a single big number to them so that they become integers, the overall multiplication would not change the final computation. In fact, we will see that the result only depends on the ratio of two coefficients. For the future reason, we need to lift $\tau$ locally on uniformizing charts, whereas the equation \eqref{ket'} only guarantees $\tau$ is a map from $K$ to the quotient space $X$. Since $K$ consists of simplicial complexes which in particular are contractible, 
%  we may lift $\tau$, at least locally, to the uniformizing charts of $\bX$ by refining the cell structure of $K$ if necessary.Note that no non-trivial orbifold structure is involved in the below equation.
Then, the Stokes theorem of \cite{Sa} tells us that
$$\int_{\partial K} \tau^\ast \eta_{\bar{q}} = \int_K d( \tau^\ast  \eta_{\bar{q}})  =  \int_K \tau^\ast (d \eta_{\bar{q}}) =0$$

 We will compute the integral $\int_{\partial K} \tau^\ast \eta_{\bar{q}} $ to get $a_{\bar{q}}$. Since the support of $\eta_{\bar{q}}$ can be shrunken so that it lies in an arbitrary small open neighborhood of $\underline{D}^+(\bar{q})$, (and since for each $\bar{q}$,  $\underline{D}^+(\bar{q})$'s are disjoint,)
\begin{equation}\label{IJ}
\displaystyle\int_{\partial K} \tau^\ast \eta_{\bar{q}} = - \int_I \tau^\ast \eta_{\bar{q}} \,\,  + \,\, \int_ J \tau^\ast \eta_{\bar{q}}\,\,(=0),
\end{equation}
where $I$ and $J$ are sub-complexes of $\partial K$ mapping to $\partial \underline{D}'^- ( {\bar{p}} )$ and $a_{\bar{q}} \underline{D}'^- ({\bar{q}})$ (via $\partial \left| \bar{p} \right>'$ and $a_{\bar{q}} \left| \bar{q}\right>$), respectively. ($ a_{\bar{q}}$ is considered to be a coefficient of a singular chain.)

Observe that the preimage of $\underline{D}^+(\bar{q})$ in $\WT{U}_{\bar{p}}$ (which is $D^+(q)$) will meet $\partial {D}'^- ( p )$, $\left( \sum_{\bar{\gamma} : \bar{p} \to \bar{q} }\frac{ \epsilon(\bar{\gamma}) |G_{\bar{p}}| }{ |G_{\bar{\gamma}} |} \right)$-times considering the orientation of intersection. (See the piture of $\WT{U}_{\bar{p}}$ in Figure \ref{cellstr}.) As mentioned earlier, $\ \sum_{\bar{\gamma} : \bar{p} \to \bar{q} }\frac{ \epsilon(\bar{\gamma}) |G_{\bar{p}}| }{ |G_{\bar{\gamma}} |} $ is nothing but the number of gradient flow lines starting at $p$ which lift flow lines in $X$ from $\bar{p}$ to $\bar{q}$.

Let a $G_{\bar{p}}$-invariant differential form $\WT{\eta_{\bar{q}} }$ represent $\eta_{\bar{q}}$ on $\WT{U}_{\bar{p}}$. Recall from Remark \ref{Thom} that on each uniformizing chart intersecting $\underline{D}^+ (\bar{q})$, $\eta_{\bar{q}}$ is defined exactly in the same way as Thom forms in smooth cases.
Therefore, the integration of $\WT{\eta_{\bar{q}}}$ over $\left| \bar{p} \right>$ counts the number of intersection points of $\partial {D}'^- ( p )$ and $D^+(q)$. This combined with \eqref{intoverchain} and \eqref{genE1} implies,
\begin{eqnarray*}
\int_I \tau^\ast \eta_{\bar{q}} &=& \frac{1}{|G_{\bar{p}}|} \int_{\partial {D}'^- ( p ) } \WT{\eta_{\bar{q}} }\\
&=& \frac{1}{|G_{\bar{p}}|}  \cdot \left( \sum_{\bar{\gamma} : \bar{p} \to \bar{q} }\frac{ \epsilon(\bar{\gamma}) |G_{\bar{p}}| }{ |G_{\bar{\gamma}} |} \right) \cdot \int_{F} \WT{\eta_{\bar{q}} } \\
&=& \sum_{\bar{\gamma} : \bar{p} \to \bar{q} }\frac{ \epsilon(\bar{\gamma}) }{ |G_{\bar{\gamma}} |},
\end{eqnarray*}
 where $F$ denotes the general fiber of the normal bundle of $D^+(q)$ in $\WT{U}_{\bar{p}}$. Here, we use the transversality at intersection points of $\partial \underline{D}'^-(\bar{p})$ and $\underline{D}^+ (\bar{q})$.

On the other hand, in the uniformizing chart $\WT{V}_{\bar{q}}$ around $\bar{q}$, the preimage $D^- (q)$ of $\underline{D}^- (\bar{q})$ is used to calculate $\int_J \tau^\ast \eta_{\bar{q}}$ and get
\begin{eqnarray*}
\int_ J \tau^\ast \eta_{\bar{q}} &=&  \frac{ a_{\bar{q}}  }{|G_{\bar{q}}|} \int_{D^- (q)} \WT{\eta_{\bar{q}} } \\
&=& \frac{ a_{\bar{q}}  }{|G_{\bar{q}}|} \int_{F_q} \WT{\eta_{\bar{q}} } \\
&=&  \frac{ a_{\bar{q}}  }{|G_{\bar{q}}|},
\end{eqnarray*}
since $D^- (q)$ and $D^+ (q)$ meet only once at $q$.
(We abbreviate $\WT{\eta_{\bar{q}} }$ to denote the representative of $\eta_{\bar{q}}$ on $\WT{V}_{\bar{q}}$.)

By comparing both integrals and \eqref{IJ}, we conclude that

$$ a_{\bar{q}} =  \sum_{\bar{\gamma} \in \mathcal{M} (\bar{p},\bar{q}) } \epsilon(\bar{\gamma}) \dfrac{|G_{\bar{q}}|}{|G_{\bar{\gamma}}|}.$$
\end{proof}

\begin{remark}
Note that there are several points in the proof where we relied on the fact that $CM_\ast(\bX, \bar{f})$ is defined over the field coefficient, although it would be still a chain complex even if using $\ZZ$-coefficients.
\end{remark}

\begin{remark}
If we instead use $\pi_{\bar{p}} \circ \left| p \right>$ as a generator of $H_\ast (X_k, X_{k-1};\RR)$ (without dividing it by $|G_{\bar{p}}|$), then the resulting $E^1$-term is isomorphic to $\left( CM_\ast (\bX, \bar{f}), \underline{\partial} \right)$ \eqref{underpar}.
\end{remark}

Therefore, under the existence of a Morse-Smale function $\bar{f}$ on $\bX$, we can prove the Poincare duality of the singular homology of the quotient space $X$ by considering $-\bar{f}$. However, the inner product which gives the Poincare pairing between $HM_\ast (\bX, \bar{f})$ and $HM_\ast (\bX, -\bar{f})$ is induced by a slight different pairing
$$<\, \, \, ,  \, \, > :  CM_\ast (\bX, \bar{f}) \otimes CM_\ast (\bX, -\bar{f}) \to \RR,$$
from the usual one where $< \bar{p}, \bar{p}> = 1 / |G_{\bar{p}}|$ and $<\bar{p},\bar{q}> = 0$ if $\bar{p} \neq \bar{q}$. Let $\partial_+$ and $\partial_-$ be boundary operators of $CM_\ast (\bX, \bar{f})$ and $CM_\ast (\bX, -\bar{f}) $, respectively. Then,
\begin{eqnarray*}
< \partial_+ \bar{p} , \bar{q} >&=& n(\bar{p},\bar{q}) <\bar{q},\bar{q}> \\
&=& \dfrac{1}{|G_{\bar{q}}|} \sum_{\bar{\gamma} \in \mathcal{M} (\bar{p},\bar{q}) } \epsilon(\bar{\gamma}) \dfrac{|G_{\bar{q}}|}{|G_{\bar{\gamma}}|} \\
&=& \sum_{\bar{\gamma} \in \mathcal{M} (\bar{p},\bar{q}) } \dfrac{ \epsilon(\bar{\gamma})}{|G_{\bar{\gamma}}|},
\end{eqnarray*}
and similarly,
$$<\bar{p}, \partial_- \bar{q}> = n(\bar{q},\bar{p}) <\bar{p},\bar{p}> = \sum_{\bar{\gamma} \in \mathcal{M} (\bar{p},\bar{q}) }  \dfrac{\epsilon(\bar{\gamma}) }{|G_{\bar{\gamma}}|}$$
so that
$$ < \partial_+ \bar{p} , \bar{q} >= <\bar{p}, \partial_-\bar{q}>, $$
and hence $<\,\,\, , \,\,>$ induces a pairing on homologies.

\begin{remark}
To get a similar pairing between $(CM_\ast (\bX, \bar{f} ), \underline{\partial})$ and $ (CM_\ast (\bX, -\bar{f}) , \underline{\partial}) $, one should modify $<\,\,\, , \,\,>$ by $< \bar{p}, \bar{p}> = |G_{\bar{p}}|$.
\end{remark}

\section{Equivariant Transversality and weak group actions}\label{equiv_trans}
In this section, we discuss the problem of equivariant transversality, which in our case asks whether or not it is possible to make a generic $G$-invariant Morse function Morse-Smale.
For a global quotient orbifold $[M/G]$, we propose an alternative approach.
Namely, we consider a Morse-Smale function $f:M \to \RR$ which is {\em not} necessarily $G$-invariant. (This is possible of course since a generic function on $M$ is Morse-Smale.) Then, we define  what we will call a weak $G$-action on the Morse complex of $f$. The idea is to consider an $G$-equivariant family of functions
$\{ f \circ g^{-1} \}_{g \in G}$ and find quasi-isomorphisms between the Morse complexes of them using continuation maps. This induces an honest $G$-action on the Morse homology of $M$, and we show that if
we start with a different Morse-Smale function, then we get an invariance in the weak sense.
\subsection{Perturbations to $G$-Morse-Smale functions for global quotients}
We explain the problem on the genericity of Morse-Smale condition for $G$-invariant functions.
A well-known way of proving that a generic function is Morse with Morse-Smale condition is as follows.
(See \cite{AB} for example.)
Consider a compact manifold $M$ and the space of smooth functions $C^\infty(M)$ with the map
$$\Psi: M \times C^\infty(M)  \to T^*M: \;\; (x, f) \mapsto df_x.$$
The function $f$ is Morse if $M \to T^*M$, given by $\Psi( \cdot, f)$, is transverse to the zero section.
In other words, when $df(x)=0$ we need the Hessian  $\textrm{Hess}(f)_x:T_xM \to T_x^*M$ to be an isomorphism.
One can prove that $\Psi$ is transverse to the zero section as a map from $M \times C^\infty(M)$. This implies that there is a Baire set $\mathcal{B} \subset
C^\infty(M)$ of Morse functions on $M$. 

To show the genericity of Morse condition in the existence of a $G$-action,, 
we need to show that  the restriction 
$$\Psi_G: M \times (C^\infty(M))^G  \to T^*M$$
is also transverse to the zero section. 
Note that for a point $x$ with the isotropy group $G_x$, we can define a $G_x$-invariant distance function from $x$ on a neighborhood of $x$, which can extend to a $G$-invariant function on $M$. Since distance functions have non-degenerate
Hessians, one can show that $\Psi_G$ is transverse to the zero section, and there is a Baire set of
$G$-invariant Morse functions on $M$ (see \cite{Wa}).

For the Morse-Smale condition without considering $G$-actions, \cite[Proposition B.2]{AB} proves that there is a
Baire set of Morse functions with Morse-Smale gradient flows, and the general scheme of the proof goes as follows.
If a gradient flow of $x$ and a negative gradient flow of $y$ meet at a point $b$, one tries to find a 
perturbation of a gradient flow of $x$ (or the one of $y$) to another direction $v \in T_bM$.
This is done by considering a suitable vector field $Y$ along a gradient flow of $x$,
and then, find some function $\widetilde{f}$ whose gradient flow restricts to $Y$.

However, if $Y$ is not a $G$-invariant vector field, then,  it is impossible
to find a $G$-invariant function $\widetilde{f}$ with the gradient flow $Y$. 
Thus such a perturbation scheme does not work in the presence of $G$-action, and apparently the Morse-Smale condition for $G$-invariant functions is not generic.

A close inspection shows that if non-trivial gradient flows always lie on the smooth part $M^{sm}$ of $M$, then
the same argument is still valid to show the following.
\begin{lemma}
If $M^{sing}=M \setminus M^{sm}$ is the set of isolated points, then there is
a Baire set of $G$-invariant Morse functions with Morse-Smale gradient flows.
For example, the assumption holds for any surface $M$ with an orientation preserving $G$-action on it.
\end{lemma}
 
\subsection{Equivariant families of Morse-Smale functions and weak group actions}
Next, we consider a  Morse-Smale function $h:M \to \RR$, which is not $G$-invariant.
Our original motivation to work with such a function $h$ was that we can obtain such a function $h$ by perturbing a given $G$-invariant Morse function $f$, but the resulting  function $h$ may not be $G$-invariant anymore.
But in the rest of the section, we do not assume that $h$ and $f$ are close to each other, and hence $h$ can be an arbitrary Morse-Smale function on $M$
which forms a dense subset of $C^\infty(M)$.

We consider a $G$-equivariant family of Morse-Smale functions $$\mathcal{F} := \{h_g:=h \circ g^{-1} \mid g \in G\},$$
where $\mathcal{F}$ has a left $G$-action.
Note that if $x \in crit(h)$, then $g \cdot x \in crit(h_g)$.
For simplicity, we assume that $G$-action on $\mathcal{F}$ is free, which means that $h_g=h$ only when $g$ is the identity element of $G$
(general cases can be handled by considering both strict and weak group actions).
Since $h$ is not $G$-invariant, we can not define $G$-action on $CM_\ast (M,h; \Theta)$ directly as done in \eqref{CMTheta}, Section 2.

We first introduce the notion of weak $G$-actions on chain complexes.
\begin{definition}
Let $(C_\ast, \partial)$ be a chain complex over $\mathbb{R}$. We say $C_\ast$ admits a weak $G$-action if the following
holds.
\begin{enumerate}
\item For each $g \in G$ and $x \in C_\ast$, we have $g \cdot  \partial x = \partial (g \cdot x)$.
\item For each $g,k \in G$, there exists a chain homotopy $\sigma_{g,k}$ from $C_\ast$ to itself satisfying
$$g \cdot \left( k \cdot x \right)  - (gk) \cdot x = \sigma_{g,k} \, \partial + \partial \, \sigma_{g,k}.$$
\end{enumerate}
\end{definition}
In particular, a weak $G$-action on $C_\ast$ obviously induces an honest $G$-action on its homology $H_\ast(C)$. 

In order to define a weak $G$-action on the Morse chain complex $CM_\ast (M,h; \Theta)$ of $h$,  We use a continuation map induced by a homotopy $H_g$ from $h_g$ to $h$ for each $g \in G$ such that
\begin{equation}\label{continuationhpty}
H_g (x,t) = \left\{
\begin{array}{ll}
h_g (x) &  t \leq 0\\
h (x) &  t \geq 1
\end{array}.
\right.
\end{equation} 
This induces a chain map called the continuation 
\begin{equation}\label{phig}
\phi_g : CM_\ast (M, h_g;\Theta) \to CM_\ast (M, h;\Theta).
\end{equation}
which counts flow lines $\gamma$ satisfying $\gamma'(t) = -\nabla H ( \gamma(t), t)$ as shown in Figure \ref{contflow}, below. Here, $\nabla H ( x, t)$ is the unique vector satisfying
$$dH(x, t) (v) = \left<\nabla H (x,t), v \right>$$
for all $v \in T_x M$ and for a $G$-invariant metric $\left<, \right>$.
\begin{figure}[htp]
\begin{center}
\includegraphics[scale=0.4]{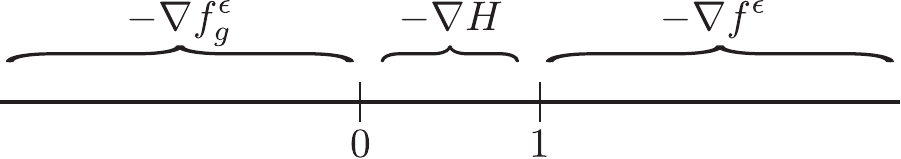}
\caption{flow lines counted for continuation homomorphism}\label{contflow}
\end{center}
\end{figure}
 It is well known from the standard Morse theory that $\phi_g$ is a chain map which induces an isomorphism on the homology level. (See for e.g. \cite{Hu})  
 
 Moreover, two continuation homomorphisms from different homotopies $H_g$ and $H_g'$ are always chain homotopy equivalent to each other. i.e. for $\phi'_g : CM_\ast (M,h_g;\Theta) \to CM_\ast (M,h;\Theta)$ from another homotopy $H'_g$ between $h_g$ and $h$, there exists a chain homotopy $\sigma$ on $CM_\ast (M,h;\Theta)$ such that $\phi'_g - \phi_g =  \sigma \partial + \partial \sigma$. ($\sigma$ is given by a homotopy between homotopies $H_g$ and $H_g'$.)

%Here, we use $\RR$-normalization (see the discussion below Example \ref{broken heart} in Section \ref{analpropMf}) to define a Morse chain complex to avoid the notational inconvenience. 

\begin{prop}\label{weakGonCM}
For $p \in crit (h)$ and $g \in G$, define $$g \cdot_{w} p = \phi_g (g \cdot p) \in CM_\ast (M,h;\Theta).$$ Then, $(\cdot_w)$ gives rise to a weak $G$-action on $CM_\ast (M,h;\Theta)$.
\end{prop}

\begin{proof}
We write $g(p) \in crit(h_g)$ for the $G$-action on $p \in crit(h)$ in the proof to avoid a confusion with an weak action.
The first property of the weak $G$-action follows since continuation map $\phi_g$ is a chain map.

As $g \cdot_w (k \cdot_w p) = \phi_g \, g \, \phi_k \, k (p)$ and $(gk) \cdot_w p = \phi_{gk} \, gk (p)$, we need to find a chain homotopy $\sigma_{g,k}$ between two chain maps $\phi_g \, g  \, \phi_k \, k$ and $\phi_{gk} \, gk$. Consider a chain map 
$$g \, \phi_k \, g^{-1} : CM_\ast (M ,h_{gk} ;\Theta) \to CM_\ast (M, h_{g};\Theta)$$
defined by the following commutative diagram.
\begin{equation}
\xymatrix{ CM_\ast (M, h_{gk};;\Theta) \ar[d]_{g^{-1}} \ar[r]^{g \, \phi_k \, g^{-1}} & CM_\ast (M, h_{g};\Theta) \\
CM_\ast (M, h_{k};\Theta)  \ar[r]_{\phi_k} & CM_\ast (M, h;\Theta)  \ar[u]_g
}
\end{equation}

We claim that that $g\, \phi_k g^{-1}$ is again a continuation homomorphism which is given by a homotopy $H_k \circ g^{-1}$  between $h_{gk}$ and $h_g$. First of all,
\begin{equation*}
H_k \circ g^{-1} (x,t) = \left\{
\begin{array}{ll}
h_k (g^{-1} \cdot x) = h_{gk} (x) &  t \leq 0\\
h (g^{-1} x) = h_g (x)  &  t \geq 1
\end{array}
\right.
\end{equation*}
by the definition of $H_k$ \eqref{continuationhpty}. Suppose that $\gamma$ is a flow line counted for $\phi_k$ (which is of the shape as in Figure \ref{contflow}) and it flows from $k \cdot p \in crit(h_k)$ to $p \in crit(h)$. Since
\begin{eqnarray*}
d( H_k \circ g^{-1}) (y, t) (v) &=& dH_k (g^{-1} \cdot y, t) ((g^{-1})_\ast v) \\
&=&  \left<\nabla H (g^{-1} \cdot y, t), g^{-1}_\ast v \right> \\
&=& \left< g_\ast \nabla H (g^{-1} \cdot y, t), v \right>
\end{eqnarray*}
for $v \in T_x M$ by $G$-invariance of the Riemannian metric, we have $g_\ast \nabla H (x, t) = \nabla (H_k \circ g^{-1}) (g \cdot x,t)$ by letting $x= g^{-1} \cdot y$. Therefore,
\begin{eqnarray*}
\frac{d}{dt} (g \cdot \gamma) (t) &=& g_\ast \gamma'(t) \\
&=& g_\ast \left( -\nabla H(\gamma(t), t) \right) \\
&=& -\nabla (H_k \circ g^{-1}) (g \cdot \gamma(t), t) 
\end{eqnarray*}
Finally, $g \cdot \gamma$ satisfies
$$ (g \cdot \gamma) (- \infty) = (gk) \cdot p \in crit(h_{gk}) \qquad (g \cdot \gamma) (+ \infty) = g \cdot p \in crit(h_g).$$
This implies that the continuation map from the homotopy $H_k  \circ g^{-1}$ (from $h_{gk}$ to $h_g$) can be obtained by counting $g \cdot \gamma$'s. Therefore, $g \phi_k g^{-1}$ is the continuation induced by $H_k \circ g^{-1}$.

By composing $g\, \phi_k g^{-1}$ and $\phi_g$, we get another chain map
$$ \phi_g \, g \, \phi_k g^{-1} : CM_\ast (M,h_{gk};\Theta) \to CM_\ast (M,h;\Theta)$$
which is induced by the composition of two homotopies $H_k \circ g^{-1}$ and $H_g$. (Indeed, any composition of two continuation maps again becomes a continuation map by concatenating associated homotopies in this way.)

At the end, we get a chain homotopy $\tilde{\sigma}_{g,k}$ satisfying
$$\phi_g \, g \, \phi_k g^{-1} - \phi_{gh} = \tilde{\sigma}_{g,k} \partial + \partial \tilde{\sigma}_{g,k}.$$ 
since two continuations maps are always chain homotopic. Now, we define $\sigma_{g,k}$ to be the composition $\tilde{\sigma}_{g,k}\, gk$ which gives a desired chain homotopy between $\phi_g \, g \, \phi_k g^{-1} gk = \phi_g \, g \, \phi_k k$ and $\phi_{gk} \, gk$ because $gk : CM_\ast (M, h;\Theta) \to CM_\ast (M, h_{gk};\Theta)$ is a chain map by the definition of $h_{gk}$.
\end{proof}

\subsection{Invariance}
We now consider two Morse-Smale perturbations $h^1$ and $h^2$. By the discussion made in the previous subsection we have weak $G$-action on $CM_\ast (M, h^1;\Theta)$ and $CM_\ast (M,h^2;\Theta)$. We want to show that the homotopy between $h^1$ and $h^2$ (defined similarly to \eqref{continuationhpty}) gives rise to a chain map
$$\psi : CM_\ast (M, h^1;\Theta) \to CM_\ast (M, h^2;\Theta)$$
which is weakly $G$-equivariant in the following sense.

\begin{definition}
Let $C^1_\ast$ and $C^2_\ast$ be chain complexes endowed with weak $G$-actions. A chain map 
$\psi : C^1_\ast \to C^2_\ast$ is called weakly $G$-equivariant if there is a chain homotopy $\tau_g$ on $C^2_\ast$ for each $g \in G$ such that
$$g \cdot_{w} \psi (p) - \psi ( g \cdot_{w} p) = \tau_g \partial + \partial \tau_g.$$
\end{definition}

In particular, a weakly equivariant chain map induces an equivariant homomorphism on the level of homology.

\begin{prop}
$\psi$ above is a $G$-equivariant chain map which induces a $G$-equivariant isomorphism
$$[\psi] : HM_\ast (M, h^1;\Theta) \to HM_\ast (M, h^2;\Theta)$$
\end{prop}

\begin{proof}
We already know that $[\psi]$ is an isomorphism from standard Morse theory. To see the weak equivariance of $\psi$, we have the find a homotopy $\tau_g (p)$ between
$$g \cdot_{w} \psi (p) = \phi_g^2 \, g \, \psi (p) \quad \mbox{and} \quad \psi (g \cdot_{w} p) = \psi \, \phi_g^1 \, g (p).$$
Consider the following commutative diagram for each $g \in G$:
\begin{equation*}
\xymatrix{ CM_\ast (M, h^1_g;\Theta ) \ar[r]^{g \, \psi \, g^{-1}} \ar[d]_{\phi_g^1} & CM_\ast (M, h^2_g;\Theta )\ar[d]^{\phi_g^2} \\
CM_\ast (M, h^1;\Theta) \ar[r]_{\psi} & CM_\ast (M, h^2;\Theta).
}
\end{equation*}
\end{proof}
Let $H$ be the homotopy from $h^1$ to $h^2$ which induces the continuation $\psi$. In the same manner as in the proof of Proposition \ref{weakGonCM}, one can show that $g\, \psi \, g^{-1} : CM_\ast (M, h_g^1;\Theta) \to CM_\ast (M, h_g^2;\Theta)$ is a continuation map induced by the homotopy $H \circ g^{-1}$. 

Since the composition of two continuations maps is again a continuation, both $\phi_g^2 \, g \, \psi  \, g^{-1}$ and $\psi \, \phi_g^1$ are continuation homomorphisms from $CM_\ast (M, h_g^1;\Theta)$ to $CM_\ast (M, h^2;\Theta)$. By the uniqueness of the continuation up to homotopy, there is a chain homotopy $\tilde{\tau}_g$ from $CM_\ast (M, h^2;\Theta)$ to itself satisfying
$$\phi_g^2 \, g \, \psi  \, g^{-1} - \psi \, \phi_g^1 = \tilde{\tau}_g \partial + \partial \tilde{\tau}_g.$$
Therefore, 
$$\phi_g^2 \, g \, \psi  - \psi \, \phi_g^1 \, g= ( \tilde{\tau}_g\, g)\partial + \partial (\tilde{\tau}_g \, g)$$
because $g : CM_\ast (M, h^2;\Theta) \to CM_\ast (M, h^2_g ;\Theta)$ is a chain map by the definition of $h^2_g$. Then, $\tau_g = \tilde{\tau}_g \, g$ gives a desired homotopy.

\bibliographystyle{amsalpha}

\end{document}